\newtheorem{theorem}{\bf Theorem} \newtheorem{definition}{\bf Definition} 
\newtheorem{lemma}{\bf Lemma} \newtheorem{remark}{\bf Remark}
 \newtheorem{corollary}{\bf Corollary} \newtheorem{proposition}{\bf Proposition} 
\newtheorem{assumption}{\bf Assumption}  
\newtheorem{Algorithm}{\bf Algorithm}
\newcommand\bbr{\mathbb{R}}
\newcommand\bbone{\mathbbm{1}}
\newcommand\calD{\mathcal{D}}
\newcommand\calZ{\mathcal{Z}}
\newcommand\rma{\mathrm{a}}
\newcommand\rmb{\mathrm{b}}
\newcommand\rmc{\mathrm{c}}
\newcommand\rmd{\mathrm{d}}
\newcommand\rme{\mathrm{e}}
\newcommand\rml{\mathrm{l}}
\newcommand\rmr{\mathrm{r}}
\newcommand\rms{\mathrm{s}}
\newcommand\rmu{\mathrm{u}}
\newcommand\rmx{\mathrm{x}}
\newcommand\rmy{\mathrm{y}}
\newcommand\rmH{\mathrm{H}}
\newcommand\rmJ{\mathrm{J}}
\newcommand\rmL{\mathrm{L}}
\newcommand\rmV{\mathrm{V}}
\newcommand\rmeq{\mathrm{eq}}
\newcommand\rmlin{\mathrm{Lin}}
\begin{document}
%
% paper title
% Titles are generally capitalized except for words such as a, an, and, as,
% at, but, by, for, in, nor, of, on, or, the, to and up, which are usually
% not capitalized unless they are the first or last word of the title.
% Linebreaks \\ can be used within to get better formatting as desired.
% Do not put math or special symbols in the title.
\title{Linear tracking MPC for nonlinear systems\\Part II: The data-driven case}
%
%
% author names and IEEE memberships
% note positions of commas and nonbreaking spaces ( ~ ) LaTeX will not break
% a structure at a ~ so this keeps an author's name from being broken across
% two lines.
% use \thanks{} to gain access to the first footnote area
% a separate \thanks must be used for each paragraph as LaTeX2e's \thanks
% was not built to handle multiple paragraphs
%

\author{Julian Berberich$^1$, %~\IEEEmembership{Member,~IEEE,}
		Johannes K\"ohler$^{1,2}$, %~\IEEEmembership{Fellow,~OSA,}
		Matthias A. M\"uller$^3$, %~\IEEEmembership{Life~Fellow,~IEEE}
		and Frank Allg\"ower$^1$.% <-this % stops a space
		\thanks{Funded by Deutsche Forschungsgemeinschaft (DFG, German Research Foundation) under Germany's Excellence Strategy - EXC 2075 - 390740016 and under grant 468094890. 
		We acknowledge the support by the Stuttgart Center for Simulation Science (SimTech).
This project has received funding from the European Research Council (ERC) under the European Union’s Horizon 2020 research and innovation programme (grant agreement No 948679).
The authors thank the International Max Planck Research School for Intelligent Systems (IMPRS-IS)
for supporting Julian Berberich.}
\thanks{$^1$University of Stuttgart, Institute for Systems Theory and Automatic Control, 70550 Stuttgart, Germany (email:$\{$julian.berberich, frank.allgower$\}$@ist.uni-stuttgart.de)}
\thanks{$^2$Institute for Dynamical Systems and Control, ETH Zurich, ZH-8092, Switzerland (email:jkoehle@ethz.ch)}
\thanks{$^3$Leibniz University Hannover, Institute of Automatic Control, 30167 Hannover, Germany (e-mail:mueller@irt.uni-hannover.de)}}% <-this % stops a space
%\thanks{Manuscript received April 19, 2005; revised August 26, 2015.}}

% note the % following the last \IEEEmembership and also \thanks - 
% these prevent an unwanted space from occurring between the last author name
% and the end of the author line. i.e., if you had this:
% 
% \author{....lastname \thanks{...} \thanks{...} }
%                     ^------------^------------^----Do not want these spaces!
%
% a space would be appended to the last name and could cause every name on that
% line to be shifted left slightly. This is one of those "LaTeX things". For
% instance, "\textbf{A} \textbf{B}" will typeset as "A B" not "AB". To get
% "AB" then you have to do: "\textbf{A}\textbf{B}"
% \thanks is no different in this regard, so shield the last } of each \thanks
% that ends a line with a % and do not let a space in before the next \thanks.
% Spaces after \IEEEmembership other than the last one are OK (and needed) as
% you are supposed to have spaces between the names. For what it is worth,
% this is a minor point as most people would not even notice if the said evil
% space somehow managed to creep in.

% The paper headers
\markboth{}%
{}
% The only time the second header will appear is for the odd numbered pages
% after the title page when using the twoside option.
% 
% *** Note that you probably will NOT want to include the author's ***
% *** name in the headers of peer review papers.                   ***
% You can use \ifCLASSOPTIONpeerreview for conditional compilation here if
% you desire.

% If you want to put a publisher's ID mark on the page you can do it like
% this:
%\IEEEpubid{0000--0000/00\$00.00~\copyright~2015 IEEE}
% Remember, if you use this you must call \IEEEpubidadjcol in the second
% column for its text to clear the IEEEpubid mark.

% use for special paper notices
%\IEEEspecialpapernotice{(Invited Paper)}

\IEEEoverridecommandlockouts

\IEEEpubid{\begin{minipage}{\textwidth}\ \\[12pt] \\ \\
\copyright 2021 IEEE. Personal use of this material is permitted. Permission from IEEE must be obtained for all other uses, in any current or future media, including reprinting/republishing this material for advertising or promotional purposes, creating new collective works, for resale or redistribution to servers or lists, or reuse of any copyrighted component of this work in other works.
\end{minipage}}

% make the title area
\maketitle

% As a general rule, do not put math, special symbols or citations
% in the abstract or keywords.
\begin{abstract}
We present a novel data-driven model predictive control (MPC) approach to control unknown nonlinear systems using only measured input-output data with closed-loop stability guarantees.
Our scheme relies on the data-driven system parametrization provided by the Fundamental Lemma of Willems et al.
We use new input-output measurements online to update the data, exploiting local linear approximations of the underlying system.
We prove that our MPC scheme, which only requires solving strictly convex quadratic programs online, ensures that the closed loop (practically) converges to the (unknown) optimal reachable equilibrium that tracks a desired output reference while satisfying polytopic input constraints.
As intermediate results of independent interest, we extend the Fundamental Lemma to affine systems and we derive novel robustness bounds w.r.t.\ noisy data for the open-loop optimal control problem, which are directly transferable to other data-driven MPC schemes in the literature.
%we propose a data-driven tracking MPC scheme with guaranteed robustness.
%The theoretical analysis of this MPC scheme relies on novel robustness bounds w.r.t.\ noisy data for the open-loop optimal control problem, which are directly transferable to other data-driven MPC schemes in the literature.
The applicability of our approach is illustrated with a numerical application to a continuous stirred tank reactor.
\end{abstract}

% Note that keywords are not normally used for peerreview papers.
%\begin{IEEEkeywords}
%Data-driven control, predictive control for linear systems, nonlinear systems, time varying systems.
%\end{IEEEkeywords}

% For peer review papers, you can put extra information on the cover
% page as needed:
% \ifCLASSOPTIONpeerreview
% \begin{center} \bfseries EDICS Category: 3-BBND \end{center}
% \fi
%
% For peerreview papers, this IEEEtran command inserts a page break and
% creates the second title. It will be ignored for other modes.
\IEEEpeerreviewmaketitle
\section{Introduction}
Data-driven control has received significant attention in recent years due to the abundance of available data, the potential difficulties in obtaining accurate models, and the simplicity of data-driven approaches, see~\cite{hou2013model} for an overview.
Our paper relies on the Fundamental Lemma by Willems et al.~\cite{willems2005note} which shows that one persistently exciting input-output trajectory can be used to parametrize all trajectories of a linear time-invariant (LTI) system.
This provides a promising foundation for data-driven control of LTI systems and it can, e.g., be used to design data-driven model predictive control (MPC) schemes~\cite{yang2015data,coulson2019deepc}.
Different contributions have analyzed such schemes in the presence of noise with regard to open-loop robustness~\cite{coulson2021distributionally,xue2021data,yin2021maximum,furieri2021near} or closed-loop stability/robustness both with~\cite{berberich2021guarantees} and without~\cite{bongard2021robust} terminal ingredients.
Although different successful applications to complex nonlinear systems have been reported in the literature, see, e.g.,~\cite{elokda2021quadcopters,huang2019power}, providing theoretical guarantees of data-driven MPC for nonlinear systems remains a widely open research problem.
The literature contains various extensions and variations of~\cite{willems2005note} for specific classes of nonlinear systems such as Hammerstein and Wiener systems~\cite{berberich2020trajectory}, Volterra systems~\cite{rueda2020data}, polynomial systems~\cite{martin2021dissipativity,guo2021data}, systems with rational dynamics~\cite{straesser2021data}, flat systems~\cite{alsalti2021data}, and linear parameter-varying systems~\cite{verhoek2021fundamental}.
However, all of these works assume that the system is linearly parametrized in known basis functions, which restricts their practical applicability.
In summary, although data-driven MPC is well-explored for LTI systems, there exists no unifying framework for nonlinear data-driven control.

In this paper, we propose an MPC approach to control unknown nonlinear systems with closed-loop stability guarantees by updating the data used in the data-driven system parametrization of~\cite{willems2005note} online, thereby exploiting that nonlinear systems can be approximated locally via linearization.
The basic idea is depicted in Figure~\ref{fig:manifold}.

\vspace{-7pt}

\begin{figure}[h!]
\begin{center}
\includegraphics[width=0.48\textwidth]{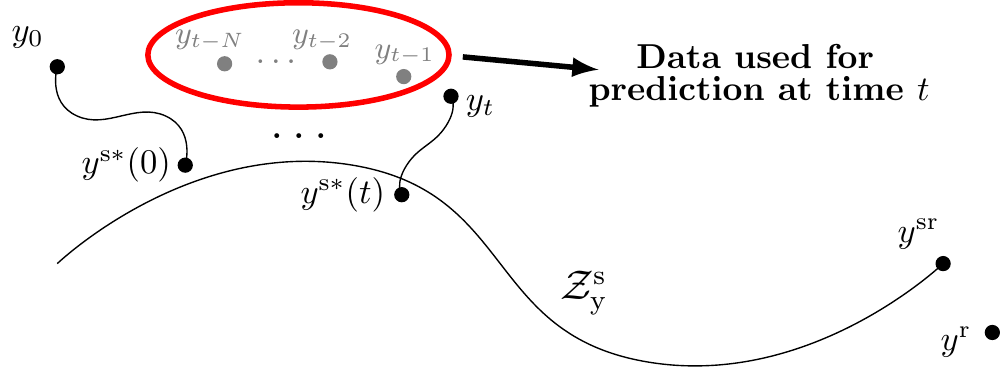}
\vspace{-12pt}
\end{center}
\caption{Graphical scheme illustrating the basic idea of our approach.
The figure displays the output equilibrium manifold $\mathcal{Z}_\rmy^\rms$, the closed-loop output and artificial equilibrium at time $0$, the closed-loop output and artificial equilibrium at time $t$, the past $N$ measurements used for prediction at time $t$, the optimal reachable equilibrium $y^{\rms\rmr}$, and the setpoint $y^\rmr$.}
\label{fig:manifold}
\end{figure}
\vspace{-5pt}

As in our companion paper~\cite{berberich2021linearpart1}, the goal is to stabilize the optimal reachable output equilibrium $y^{\rms\rmr}$ corresponding to a given setpoint $y^\rmr$, which may not lie on the output equilibrium manifold $\mathcal{Z}_\rmy^\rms$.
To this end, we employ a tracking MPC formulation with an artificial equilibrium $y^{\rms}(t)$ which is optimized online, similar to~\cite{limon2018nonlinear,koehler2020nonlinear}.
In our companion paper~\cite{berberich2021linearpart1}, we show that, if the current linearization is used for prediction, then, under suitable assumptions on the design parameters and for initial conditions close to $\mathcal{Z}_\rmy^\rms$, $y^\rms(t)$ and thus the closed-loop output $y_t$ slide along $\mathcal{Z}_\rmy^\rms$ towards $y^{\rms\rmr}$, see~\cite{berberich2021linearpart1} for details.
In the present paper, the key difference to~\cite{berberich2021linearpart1} is that no model of the nonlinear system or its linearization is available and we use past $N$ input-output measurements to predict future trajectories based on the Fundamental Lemma~\cite{willems2005note}.
Since these measurements originate\newpage \noindent from the nonlinear system, they do not provide an exact description of the linearized dynamics which poses additional challenges if compared to the model-based MPC in~\cite{berberich2021linearpart1}.
In this paper, we show that, if the system does not evolve too rapidly during the initial data collection, then the predictions are sufficiently accurate during the closed-loop operation such that practical stability can be guaranteed.
%In this paper, we show that, if the initial distance between the data points is not too large, then it remains small in closed loop and the predictions are sufficiently accurate such that practical stability can be guaranteed.
Our MPC scheme relies on 
%The implementation requires 
solving strictly convex quadratic programs (QPs) and the only prior knowledge about the nonlinear system required for the implementation is a (potentially rough) upper bound on its order.
For our theoretical analysis, we assume that the closed-loop input generated by the MPC scheme is persistently exciting, but we discuss multiple practical approaches for ensuring this property and we plan to investigate this issue in more detail in future research.

The works~\cite{lawrynczuk2007computationally,papadimitriou2020control} are related to our approach since they estimate linear time-varying models of nonlinear systems from data online, but they do not provide closed-loop stability guarantees.
%Controlling nonlinear systems using linear models via Koopman operator arguments has received increasing attention in recent years, see, e.g.,~\cite{korda2018linear}, but typically no closed-loop guarantees can be given.
%Further recent approaches at the intersection of Koopman operators and the Fundamental Lemma have been developed in~\cite{lian2021koopman,lian2021nonlinear}, but, again, without closed-loop guarantees.
Controlling nonlinear systems using linear models via Koopman operator arguments has received increasing attention in recent years~\cite{korda2018linear}, also in connection to the Fundamental Lemma~\cite{lian2021koopman,lian2021nonlinear}, but typically no closed-loop guarantees can be given.
Moreover, data-driven control methods based on machine learning, cf.~\cite{sutton1998reinforcement,hewing2020cautious}, have been successfully applied but, also, they often do not provide closed-loop guarantees.

An obvious alternative to our results is provided by sequential system identification (e.g., online LTI system identification or recursive least squares estimation~\cite{ljung1987system}) and model-based MPC.
Data-driven MPC has the advantage of being more direct, only requiring to tune and solve one optimization problem, the parameters of which can even be interpreted as an implicit system identification step~\cite{doerfler2021bridging}.
Regardless, to the best of our knowledge, there are no results on closed-loop stability based on linearization arguments under similar assumptions as we consider for either identification-based or data-driven MPC.
Alternative system identification approaches for nonlinear systems combine Lipschitz continuity-like properties with set membership estimation~\cite{milanese2004set,calliess2014conservative}, possibly leading to increasingly complex models, or they require appropriately chosen basis functions, see, e.g.,~\cite{sjoeberg1995nonlinear} or approaches from nonlinear adaptive MPC~\cite{guay2015robust}.
In contrast, our data-driven MPC is direct, simple, applicable to a broad class of nonlinear systems, and it admits stability guarantees, thereby indicating potential advantages over the classical identification-based approaches.
This is possible since we implicitly encourage the closed-loop trajectory to remain in vicinity of the steady-state manifold, where our data-driven prediction model is a good approximation of the underlying nonlinear dynamics.
Initial ideas in this direction have been discussed in~\cite{berberich2021at} along with experimental results, however, without any theoretical analysis.
Finally, the presented results are also related to offset-free MPC~\cite{morari2012nonlinear}, which deals with setpoint tracking based on an uncertain model, whereas our approach achieves asymptotic convergence to the setpoint using only input-output data.

The remainder of the paper is structured as follows.
Since the linearization generally leads to an affine dynamical system, we first extend the Fundamental Lemma to affine systems in Section~\ref{sec:willems_affine}.
Next, we describe the problem setup including the required assumptions in Section~\ref{sec:prelim}.
In Section~\ref{sec:NL_MPC}, we present our data-driven MPC scheme for nonlinear systems and we prove practical exponential stability of the closed loop.
The proposed approach is applied to a nonlinear numerical example in Section~\ref{sec:example} and the paper is concluded in Section~\ref{sec:conclusion}.

\subsubsection*{Notation}
We denote the set of nonnegative integers by $\mathbb{I}_{\geq0}$, the set of integers in the interval $[a,b]$ by $\mathbb{I}_{[a,b]}$, and the nonnegative real numbers by $\mathbb{R}_{\geq0}$.
Moreover, $\lVert \cdot\rVert_2$ denotes the $2$-norm of a vector, or the induced $2$-norm if the argument is a matrix.
For a matrix $P=P^\top$, we denote by $\lambda_{\min}(P)$ ($\lambda_{\max}(P)$) the minimum (maximum) eigenvalue of $P$, we write $P\succ0$ if $P$ is positive definite, and we define $\lVert x\rVert_P^2\coloneqq x^\top P x$ for some vector $x$.
For matrices $P_1=P_1^\top$, $P_2=P_2^\top$, we define $\lambda_{\min}(P_1,P_2)\coloneqq\min\{\lambda_{\min}(P_1),\lambda_{\min}(P_2)\}$, and similarly for $\lambda_{\max}(P_1,P_2)$.
Further, $A^\dagger$ denotes the Moore-Penrose inverse of a matrix $A$ and $\otimes$ denotes the Kronecker product.
The interior of a set $X$ is denoted by $\mathrm{int}(X)$.
We define $\mathcal{K}_{\infty}$ as the class of functions $\alpha:\mathbb{R}_{\geq0}\to\mathbb{R}_{\geq0}$ which are continuous, strictly increasing, unbounded, and satisfy $\alpha(0)=0$.
For a sequence $\{x_k\}_{k=0}^{N-1}$, we define the Hankel matrix
\begin{align*}
H_L&(x)\coloneqq\begin{bmatrix}x_0 & x_1& \dots & x_{N-L}\\
x_1 & x_2 & \dots & x_{N-L+1}\\
\vdots & \vdots & \ddots & \vdots\\
x_{L-1} & x_{L} & \dots & x_{N-1}
\end{bmatrix},
\end{align*}
we denote a stacked window by $x_{[a,b]}\coloneqq\begin{bmatrix}x_a^\top&\dots&x_b^\top\end{bmatrix}$, and we write $x\coloneqq x_{[0,N-1]}$.
Finally, we write $\mathbbm{1}_{n}$ for an $n$-dimensional column vector with all entries equal to $1$.
Throughout this paper, we use the inequalities
\begin{align}\label{eq:ab_ineq2}
\lVert a+b\rVert_P^2&\leq2\lVert a\rVert_P^2+2\lVert b\rVert_P^2,\\\label{eq:ab_ineq}
\lVert a\rVert_P^2-\lVert b\rVert_P^2&\leq\lVert a-b\rVert_P^2+2\lVert a-b\rVert_P\lVert b\rVert_P,
\end{align}
which hold for any vectors $a$, $b$ and matrix $P=P^\top\succ0$.

\section{Fundamental Lemma for affine systems}\label{sec:willems_affine}
In this section, we provide a data-driven parametrization of unknown systems with affine dynamics, i.e.,
\begin{align}\label{eq:sys_affine}
x_{k+1}&=Ax_k+Bu_k+e,\\\nonumber
y_k&=Cx_k+Du_k+r
\end{align}
with state $x_k\in\mathbb{R}^n$, input $u_k\in\mathbb{R}^m$, and output $y_k\in\mathbb{R}^p$, all at time $k\in\mathbb{I}_{\geq0}$.
We assume that the matrices $A$, $B$, $C$, $D$ and the offsets $e$, $r$ are unknown, but one input-output trajectory $\{u_k^\rmd,y_k^\rmd\}_{k=0}^{N-1}$ of~\eqref{eq:sys_affine} is available.
The Fundamental Lemma~\cite{willems2005note} shows that, if $e=0$, $r=0$ (i.e., the system is linear) and certain persistence of excitation and controllability conditions hold, then a sequence $\{u_k,y_k\}_{k=0}^{L-1}$ is a trajectory of~\eqref{eq:sys_affine} if and only if there exists a vector $\alpha\in\mathbb{R}^{N-L+1}$ such that
\begin{align}
\begin{bmatrix}H_L(u^\rmd)\\H_L(y^\rmd)\end{bmatrix}\alpha=
\begin{bmatrix}u\\y\end{bmatrix}.
\end{align}
In the following, we provide an extension of this result to the class of affine systems~\eqref{eq:sys_affine}.
We note that such an extension is not trivial since, without knowledge of the vectors $e$ and $r$, they cannot be set to zero without loss of generality.
We propose the following definition of persistence of excitation, where we write $\{x_k^\rmd\}_{k=0}^{N-1}$ for the state sequence corresponding to the available input-output data $\{u_k^\rmd,y_k^\rmd\}_{k=0}^{N-1}$.
\begin{definition}\label{def:pe}
We say that the data $\{u_k^\rmd\}_{k=0}^{N-1}$, $\{x_k^\rmd\}_{k=0}^{N-L}$ are persistently exciting of order $L$ if
\begin{align}\label{eq:def_pe}
\text{rank}\left(\begin{bmatrix}H_L(u^\rmd)\\H_1(x^\rmd_{[0,N-L]})\\\mathbbm{1}_{N-L+1}^\top\end{bmatrix}\right)
=mL+n+1.
\end{align}
\end{definition}
The persistence of excitation condition used for the Fundamental Lemma in~\cite{willems2005note} requires that $H_{L+n}(u^\rmd)$ has full rank which in turn, assuming controllability, implies that
\begin{align}\label{eq:pe_original}
\text{rank}\left(\begin{bmatrix}H_L(u^\rmd)\\H_1(x^\rmd_{[0,N-L]})\end{bmatrix}\right)=mL+n.
\end{align}
In the present paper, we require the stronger condition in Definition~\ref{def:pe} to account for the affine system dynamics~\eqref{eq:sys_affine} with $e\neq0$, $r\neq0$.
In the more recent paper~\cite{martinelli2022data}, it is shown that Condition~\eqref{eq:def_pe} can be enforced if $(A,B)$ is controllable by choosing the input data sufficiently rich in the sense that $H_{L+n+1}(u^\rmd)$ has full row rank, i.e., $\mathrm{rank}(H_{L+n+1}(u^\rmd))=m(L+n+1)$.
%In general, it is non-trivial to verify~\eqref{eq:def_pe} from given input-output data.
%However,~\eqref{eq:pe_original} can be ensured for controllable systems by choosing a persistently exciting input, cf.~\cite{willems2005note}, and any input-state trajectory satisfying~\eqref{eq:pe_original} and violating~\eqref{eq:def_pe} satisfies~\eqref{eq:def_pe} after an arbitrarily small perturbation of the input or the initial condition.
%
\begin{theorem}\label{thm:willems_affine}
Suppose the data $\{u_k^\rmd\}_{k=0}^{N-1}$, $\{x_k^\rmd\}_{k=0}^{N-L}$ are persistently exciting of order $L$.
Then, $\{u_k,y_k\}_{k=0}^{L-1}$ is a trajectory of~\eqref{eq:sys_affine} if and only if there exists $\alpha\in\mathbb{R}^{N-L+1}$ such that
\begin{align}\label{eq:thm_willems_affine}
\sum_{i=0}^{N-L}\alpha_i&=1,\quad\begin{bmatrix}H_L(u^\rmd)\\H_L(y^\rmd)\end{bmatrix}\alpha=\begin{bmatrix}u\\y\end{bmatrix}.
\end{align}
\end{theorem}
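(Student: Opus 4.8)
The plan is to mimic the classical argument for the Fundamental Lemma but carried out on the lifted affine system, using the constant "1" row to absorb the offsets. First I would rewrite the affine dynamics \eqref{eq:sys_affine} as a linear system on an augmented state. Define $\tilde{x}_k \coloneqq \begin{bmatrix} x_k^\top & 1 \end{bmatrix}^\top \in \mathbb{R}^{n+1}$; then
\begin{align*}
\tilde{x}_{k+1} &= \begin{bmatrix} A & e \\ 0 & 1 \end{bmatrix}\tilde{x}_k + \begin{bmatrix} B \\ 0 \end{bmatrix} u_k, \qquad y_k = \begin{bmatrix} C & r \end{bmatrix}\tilde{x}_k + D u_k,
\end{align*}
which is a genuine \emph{linear} time-invariant system $(\tilde{A},\tilde{B},\tilde{C},D)$. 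The data trajectory $\{u_k^\rmd, y_k^\rmd\}$ is a trajectory of this lifted system with lifted state sequence $\tilde{x}_k^\rmd = \begin{bmatrix} (x_k^\rmd)^\top & 1 \end{bmatrix}^\top$. The key observation is that the persistence-of-excitation rank condition \eqref{eq:def_pe} is exactly the statement that
\begin{align*}
\mathrm{rank}\left(\begin{bmatrix} H_L(u^\rmd) \\ H_1(\tilde{x}^\rmd_{[0,N-L]}) \end{bmatrix}\right) = mL + n + 1,
\end{align*}
i.e., the \emph{lifted} input-state data is persistently exciting of order $L$ in the usual (linear) sense, since the row $\mathbbm{1}_{N-L+1}^\top$ is precisely the extra row contributed by the constant component of $\tilde{x}^\rmd$.

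Next I would apply the standard linear Fundamental Lemma argument to the lifted system. For the "if" direction: given $\alpha$ satisfying \eqref{eq:thm_willems_affine}, set $u = H_L(u^\rmd)\alpha$ and note that $H_L(y^\rmd)\alpha = y$; using the constraint $\sum_i \alpha_i = 1$, the combination $\sum_i \alpha_i \tilde{x}^\rmd_{[\cdot]}$ has the correct constant last component, so $\begin{bmatrix} H_L(u^\rmd) \\ H_L(\tilde{x}^\rmd)\text{-block} \end{bmatrix}\alpha$ produces a genuine state/input window, and by linearity of the lifted dynamics $(u,y)$ is a trajectory; projecting back, $(u,y)$ is a trajectory of \eqref{eq:sys_affine}. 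For the "only if" direction: given a trajectory $\{u_k,y_k\}_{k=0}^{L-1}$ of \eqref{eq:sys_affine}, lift it to $\{u_k, \tilde{x}_k\}$ with some initial $\tilde{x}_0 = \begin{bmatrix} x_0^\top & 1\end{bmatrix}^\top$; the set of all length-$L$ input/lifted-state trajectories is a linear subspace, and the rank condition guarantees that the columns of $\begin{bmatrix} H_L(u^\rmd) \\ H_1(\tilde{x}^\rmd_{[0,N-L]}) \end{bmatrix}$ span the whole space $\mathbb{R}^{mL+n+1}$ of feasible initial-input/initial-state data, so there is an $\alpha$ reproducing $u$ and $\tilde{x}_0$; since the last row of $H_1(\tilde{x}^\rmd_{[0,N-L]})$ is $\mathbbm{1}_{N-L+1}^\top$, reproducing the last component of $\tilde{x}_0$ forces $\sum_i \alpha_i = 1$; and because both trajectories then share the same input window and the same initial lifted state, the lifted dynamics force the outputs to coincide, giving $H_L(y^\rmd)\alpha = y$.

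The main obstacle, and the step I would spend the most care on, is justifying that the lifted system's trajectory subspace is fully parametrized by the data columns — i.e., the analogue of the controllability/spanning argument in \cite{willems2005note}. Concretely, one must argue that every length-$L$ window of an input/lifted-state trajectory of $(\tilde{A},\tilde{B})$ lies in the column span of $\begin{bmatrix} H_L(u^\rmd) \\ H_1(\tilde{x}^\rmd_{[0,N-L]}) \end{bmatrix}$; this follows because an input window $u_{[0,L-1]}$ together with an initial state $\tilde{x}_0$ uniquely determines the window, the map $(\tilde{x}_0, u_{[0,L-1]}) \mapsto (\text{state/input window})$ is linear and injective, its domain has dimension $mL + n + 1$, and the rank condition says the data columns already span a space of that dimension within this subspace — hence they span all of it. One subtlety worth flagging: unlike in the original lemma, controllability of $(A,B)$ is not separately assumed here; the rank condition \eqref{eq:def_pe} is imposed directly on the input-state data, so the spanning property is available by hypothesis rather than derived, which actually streamlines the argument. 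I would also remark (as the paragraph after Definition~\ref{def:pe} does) that \eqref{eq:def_pe} is implied by richness of the input data when $(A,B)$ is controllable, per \cite{martinelli2022data}, but this is not needed for the proof itself.
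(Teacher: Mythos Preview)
Your proof is correct and complete. The lifting to the augmented linear system $\tilde{x}_k=\begin{bmatrix}x_k^\top & 1\end{bmatrix}^\top$ is a clean way to organize the argument, and you correctly handle the subtlety that controllability of the lifted pair $(\tilde{A},\tilde{B})$ is not assumed (and indeed fails, since the constant component is uncontrollable): you use the rank condition~\eqref{eq:def_pe} directly as a spanning hypothesis rather than deriving it from input richness.

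The paper's proof is mathematically the same but stays at the level of the original affine system: it writes the explicit input--state--output relation $y_{[i,i+L-1]}^\rmd=\Phi_L x_i^\rmd+\Gamma_{u,L}u_{[i,i+L-1]}^\rmd+\Gamma_{e,L}e_L+r_L$ and computes $H_L(y^\rmd)\alpha$ directly, using $\sum_i\alpha_i=1$ to make the offset terms $\Gamma_{e,L}e_L+r_L$ survive the linear combination unchanged. For ``only if'' it solves $\begin{bmatrix}H_L(u^\rmd)\\H_1(x^\rmd_{[0,N-L]})\\\mathbbm{1}^\top\end{bmatrix}\alpha=\begin{bmatrix}u\\x_0\\1\end{bmatrix}$ via the rank condition and then verifies $H_L(y^\rmd)\alpha=y$ by the same explicit formula. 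Your augmented-state viewpoint explains structurally \emph{why} $\sum_i\alpha_i=1$ is the right constraint (it preserves the constant component of $\tilde{x}$), whereas the paper's computation shows it more concretely (it preserves the affine offsets $e,r$). The two are equivalent; yours is slightly more conceptual, the paper's slightly more self-contained.
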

The proof of Theorem~\ref{thm:willems_affine} %is straightforward and it 
is provided 
%for completeness 
in Appendix~\ref{sec:app_willems_affine_proof}.
Theorem~\ref{thm:willems_affine} extends the Fundamental Lemma~\cite{willems2005note} to systems with affine dynamics.
The key difference to the linear case is the condition $\sum_{i=0}^{N-L}\alpha_i=1$ in~\eqref{eq:thm_willems_affine}, i.e., the vector $\alpha$ sums up to one.
Intuitively, this implies that the offsets $e$ and $r$ in~\eqref{eq:sys_affine} are carried through from the data $(u^\rmd,y^\rmd)$ to the new trajectory $(u,y)$ in~\eqref{eq:thm_willems_affine}.
We note that a condition similar to $\sum_{i=0}^{N-L}\alpha_i=1$ also appears in~\cite{salvador2019data}, albeit in a different problem setting with the objective of offset-free data-driven control.	
Further, instead of considering this additional condition, it is also possible to utilize the fact that the trajectory $\{u_{k+1}-u_k,y_{k+1}-y_k\}_{k=0}^{N-2}$ corresponds to an LTI system, compare, e.g.,~\cite{bianchin2021data}.

\section{Nonlinear dynamics and linearization}\label{sec:prelim}
In Section~\ref{subsec:prelim_setting}, we present the problem setup, followed by the assumptions required for our theoretical guarantees in Section~\ref{subsec:prelim_assumptions}.
Further, Section~\ref{subsec:prelim_bound} contains a technical result bounding the influence of the nonlinearity on the data.

\subsection{Problem setup}\label{subsec:prelim_setting}

We consider unknown \emph{nonlinear} systems of the form
\begin{align}\label{eq:sys_NL}
x_{k+1}&=f(x_k,u_k)=f_0(x_k)+Bu_k,\\\nonumber
y_k&=h(x_k,u_k)=h_0(x_k)+Du_k
\end{align}
with state $x_k\in\mathbb{R}^n$, input $u_k\in\mathbb{R}^m$, and output $y_k\in\mathbb{R}^p$, all at time $k\in\mathbb{I}_{\geq0}$, and $f_0:\mathbb{R}^n\to\mathbb{R}^n$, $B\in\mathbb{R}^{n\times m}$, $h_0:\mathbb{R}^{n}\to\mathbb{R}^p$, $D\in\mathbb{R}^{p\times m}$.
Note that we assume control-affine system dynamics, which can be enforced via an input transformation, see~\cite[Section II.A]{berberich2021linearpart1} for details.
System~\eqref{eq:sys_NL} is subject to pointwise-in-time input constraints $u_t\in\mathbb{U}$ for $t\in\mathbb{I}_{\geq0}$ with some convex, compact polytope $\mathbb{U}$.
For some convex polytope $\mathbb{U}^{\rms}\subseteq\mathrm{int}\left(\mathbb{U}\right)$, which is required for a local controllability argument, we define the steady-state manifold and its projection on the output by
\begin{align*}
\mathcal{Z}^\rms&\coloneqq\{(x^\rms,u^\rms)\in\mathbb{R}^n\times\mathbb{U}^{\rms}\mid x^\rms=f(x^\rms,u^\rms)\},\\
\mathcal{Z}^\rms_{\rmy}&\coloneqq\{y^\rms\in\mathbb{R}^{p}\mid y^\rms=h(x^\rms,u^\rms), (x^\rms,u^\rms)\in\mathcal{Z}^\rms\}.
\end{align*}
Further, the projection of $\calZ^\rms$ on the state is denoted by $\calZ_\rmx^\rms$.
In this paper, we propose a data-driven MPC scheme to track a desired setpoint reference $y^\rmr\in\mathbb{R}^p$ based only on input-output data of~\eqref{eq:sys_NL}, without explicit knowledge of the vector fields $f$ and $h$.
More precisely, we assume that, at time $t$, we only have access to the last $N\in\mathbb{I}_{\geq0}$ input-output measurements $\{u_k,y_k\}_{k=t-N}^{t-1}$ to compute the next control input.
While we assume noise-free data for our main theoretical results, it is straightforward to extend these results to noisy data, compare Remark~\ref{rk:noise}.
Similar to other data-driven MPC approaches based on~\cite{willems2005note}, the scheme does not involve state measurements and therefore, the analysis relies on the \emph{extended} state vector
\begin{align}\label{eq:xi_def_new}
\xi_t\coloneqq\begin{bmatrix}u_{[t-n,t-1]}\\y_{[t-n,t-1]}\end{bmatrix}
\end{align}
for $t\geq n$.
%Our control goal is tracking of a setpoint $y^\rmr\in\mathbb{R}^{p}$.
In general, $y^\rmr\notin\mathcal{Z}^\rms_{\rmy}$ such that our scheme will guarantee stability of the optimal reachable equilibrium $y^{\rms\rmr}$, which is the minimizer of
\begin{align}\label{eq:opt_reach_equil_NL}
J_{\rmeq}^*\coloneqq\min_{y^\rms\in\mathcal{Z}^\rms_{\rmy}}\lVert y^\rms-y^\rmr\rVert_S^2
\end{align}
with some $S\succ0$.
Assumptions made later will imply that this minimizer and the corresponding input-state pair $(x^{\rms\rmr},u^{\rms\rmr})$ are unique, and we denote by $\xi^{\rms\rmr}$ the corresponding extended state.
%, cf.~\eqref{eq:xi_def}.
We assume that all vector fields in~\eqref{eq:sys_NL} are continuously differentiable and we define, for a linearization point $\tilde{x}\in\mathbb{R}^n$,
\begin{align}\label{eq:linearization}
A_{\tilde{x}}&\coloneqq\frac{\partial f_0}{\partial x}\Big\rvert_{\tilde{x}},\>\>
e_{\tilde{x}}\coloneqq f_0(\tilde{x})-A_{\tilde{x}}\tilde{x},\\\nonumber
C_{\tilde{x}}&\coloneqq\frac{\partial h_0}{\partial x}\Big\rvert_{\tilde{x}},\>\>
r_{\tilde{x}}\coloneqq h_0(\tilde{x})-C_{\tilde{x}}\tilde{x}.
\end{align}
Moreover, we define the affine dynamics resulting from the linearization of~\eqref{eq:sys_NL} at $(x,u)=(\tilde{x},0)$ by $f_{\tilde{x}}(x,u)\coloneqq A_{\tilde{x}}x+Bu+e_{\tilde{x}}$ and $h_{\tilde{x}}(x,u)\coloneqq C_{\tilde{x}}x+Du+r_{\tilde{x}}$.
Let now $\mathcal{D}=\{u_k',y_k'\}_{k=0}^{N-1}$ be a trajectory of the \emph{linearized} dynamics at some $\tilde{x}\in\mathbb{R}^n$ (i.e., a trajectory of~\eqref{eq:sys_NL}, replacing $f$ and $h$ by $f_{\tilde{x}}$ and $h_{\tilde{x}}$, respectively), whose input-state component is persistently exciting of order $L+n+1$ in the sense of Definition~\ref{def:pe}.
For our theoretical analysis, we define the optimal steady-state problem for the linearized dynamics by
\begin{align}\label{eq:opt_reach_equil_linearized}
J_{\mathrm{eq},\rmlin}^*(\tilde{x})&\coloneqq\min_{u^{\rms},y^{\rms},\alpha^{\rms}}\lVert y^\rms-y^\rmr\rVert_S^2\\\nonumber
\text{s.t.}\>\>&\begin{bmatrix}H_{L+n+1}(u')\\H_{L+n+1}(y')\\\mathbbm{1}_{N-L-n}^\top\end{bmatrix}\alpha^{\rms}=\begin{bmatrix}\bbone_{L+n+1}\otimes u^{\rms}\\\bbone_{L+n+1}\otimes y^{\rms}\\1\end{bmatrix},\>u^{\rms}\in\mathbb{U}^{\rms}.
\end{align}
We write $u^{\rms\rmr}_{\rmlin}(\tilde{x})$, $y^{\rms\rmr}_{\rmlin}(\tilde{x})$ for the optimal solution and $x^{\rms\rmr}_{\rmlin}(\tilde{x})$ ($\xi^{\rms\rmr}_{\rmlin}(\tilde{x})$) for the (extended) steady-state of the linearized dynamics, which are unique due to assumptions made in the following.
Further, $\alpha^{\rms\rmr}_{\rmlin}(\mathcal{D})$ denotes the optimal solution with minimum $2$-norm.
Note that, in contrast to the optimal input, state, and output, the optimal value of $\alpha^\rms$ depends not only on $\tilde{x}$ but also on the data set $\mathcal{D}$ in~\eqref{eq:opt_reach_equil_linearized}.
Throughout this paper, we assume that $\alpha^{\rms\rmr}_{\rmlin}(\mathcal{D})$ is uniformly bounded.\footnote{This is always fulfilled in closed loop if the data generated by the proposed MPC scheme are uniformly persistently exciting (cf. Assumption~\ref{ass:closed_loop_pe}), using compactness of the steady-state manifold (Assumption~\ref{ass:NL_sys}).
In case this is not guaranteed a priori, a uniform bound on $\alpha^{\rms\rmr}_{\rmlin}(\mathcal{D})$ can be ensured by using incremental data $\Delta u_k=u_{k+1}-u_k$, $\Delta y_k=y_{k+1}-y_k$ in the proposed data-driven MPC scheme and its analysis.
In this case, $(u^{\rms\rmr}_{\rmlin}(\tilde{x}),y^{\rms\rmr}_{\rmlin}(\tilde{x}))=(0,0)$ for any $\tilde{x}\in \bbr^n$ and thus, $\alpha^{\rms\rmr}_{\rmlin}(\mathcal{D})=0$, i.e., $\alpha^{\rms\rmr}_{\rmlin}(\mathcal{D})$ is uniformly bounded.}

\subsection{Assumptions}\label{subsec:prelim_assumptions}

Since our analysis relies on similar arguments as our companion paper~\cite{berberich2021linearpart1}, we require the following assumptions.
\begin{assumption}\label{ass:NL_sys}
System~\eqref{eq:sys_NL} satisfies~\cite[Assumptions 1-5]{berberich2021linearpart1}.
\end{assumption}
Through~\cite[Assumption 1]{berberich2021linearpart1}, we assume that all vector fields in~\eqref{eq:sys_NL} are twice continuously differentiable.
Moreover,~\cite[Assumptions 2 and 3]{berberich2021linearpart1} require that the linearized dynamics are controllable %(cf. Assumption~\ref{ass:affine_ctrb}) 
and satisfy 
%the 
a certain 
tracking condition
% in Assumption~\ref{ass:affine_unique_steady_state} 
at any linearization point.
In~\cite[Assumption 4]{berberich2021linearpart1}, we assume that $I-A_{\tilde{x}}$ is non-singular for any $\tilde{x}\in\mathbb{R}^n$, and~\cite[Assumption 5]{berberich2021linearpart1} requires that the union of all steady-state manifolds of the linearized dynamics are compact.
We refer to~\cite{berberich2021linearpart1} for a more detailed discussion of these assumptions.
%We also work with the extended state vector $\xi_t$ in~\eqref{eq:xi_def} on which we make the following assumption.
%\JB{Moreover, we make the following assumption on the extended state vector $\xi_t$ in~\eqref{eq:xi_def_new}.}
%
\begin{assumption}\label{ass:NL_obsv}
(Observability)
There exists a locally Lipschitz continuous map $T_L:\mathbb{R}^{n(m+p)}\to\mathbb{R}^n$ such that
\begin{align}\label{eq:NL_x_xi_Lipschitz}
x_t=T_\rmL(\xi_t).
\end{align}
Further, for any $x_t\in\mathbb{R}^n$, there exists an extended state $\xi_t'=\begin{bmatrix}u_{[t-n,t-1]}'^\top &y_{[t-n,t-1]}'^\top\end{bmatrix}^\top$ corresponding to $x_t$ in the dynamics linearized at $x_t$, i.e., there exists an affine map $T_{x_t}:\mathbb{R}^{(m+p)n}\to\mathbb{R}^n$ such that
\begin{align}\label{eq:NL_trafo_xi_x_lin}
x_t&=T_{x_t}(\xi_t').
\end{align}
\end{assumption}
Assumption~\ref{ass:NL_obsv} corresponds to (final state) observability (compare, e.g.,~\cite[Definition 4.29]{rawlings2020model}) of the linearized and nonlinear dynamics.
Analogous observability assumptions are also required in \emph{linear} data-driven MPC, compare, e.g.,~\cite{coulson2021distributionally,berberich2021guarantees}.
In the proof of Theorem~\ref{thm:NL_stability}, we provide a precise definition of the particular choice of $\xi_t'$ used for our theoretical analysis.
\begin{assumption}\label{ass:NL_manifold_convex}
For any compact set $\Xi$, there exist constants $c_{\mathrm{eq},1},c_{\mathrm{eq},2}>0$ such that, for any extended state $\hat{\xi}\in\Xi$ it holds that
\begin{align}\label{eq:ass_NL_manifold_convex}
c_{\mathrm{eq},1}\lVert\hat{\xi}-\xi^{\rms\rmr}_{\rmlin}(\hat{x})\rVert_2^2\leq\lVert\hat{\xi}-\xi^{\rms\rmr}\rVert_2^2\leq c_{\mathrm{eq},2}\lVert\hat{\xi}-\xi^{\rms\rmr}_{\rmlin}(\hat{x})\rVert_2^2,
\end{align}
where $\hat{x}=T_\rmL(\hat{\xi})$, compare~\eqref{eq:NL_x_xi_Lipschitz}.
\end{assumption}
Assumption~\ref{ass:NL_manifold_convex} is analogous to~\cite[Assumption 6]{berberich2021linearpart1}, where Inequality~\eqref{eq:ass_NL_manifold_convex} is assumed for the state $\hat{x}$ and the optimal reachable steady-states $x^{\rms\rmr}$, $x^{\rms\rmr}_{\rmlin}(\hat{x})$ (see~\cite{berberich2021linearpart1} for details).
In the data-driven framework, we require Assumption~\ref{ass:NL_manifold_convex} since the proposed data-driven MPC scheme only involves input-output values and hence, our theoretical analysis relies on the extended state $\xi$.
Using similar arguments as in~\cite{berberich2021linearpart1}, it can be shown that Assumption~\ref{ass:NL_manifold_convex} holds if i)~\cite[Assumptions 1, 2, 4, and 5]{berberich2021linearpart1} hold for the extended state-space system with state\footnote{It is straightforward to verify that, if~\cite[Assumptions 1, 2, 4, and 5]{berberich2021linearpart1} hold for the state $x$, then they also hold for the state $\xi$ with different constants.}  $\xi$, ii) the setpoint $y^\rmr$ is reachable, i.e., $y^{\rms\rmr}=y^\rmr$, and iii) $m=p$.

%\subsection{Prediction error bound due to nonlinearity}\label{subsec:NL_bound}
\subsection{Bounding the influence of the nonlinearity on the data }\label{subsec:prelim_bound}
Our goal is to control the unknown nonlinear system~\eqref{eq:sys_NL} via MPC using the last $N$ input-output measurements $\{u_k,y_k\}_{k=t-N}^{t-1}$ to predict future trajectories at time $t$.
By Theorem~\ref{thm:willems_affine}, data corresponding to the (affine) linearization of~\eqref{eq:sys_NL} provide an exact parametrization of all trajectories of the linearized dynamics.
However, we only have access to data of the \emph{nonlinear} dynamics.
In the following, we bound the difference between the (artificial) data of the linearization at $x_t$ and the actually available data of the nonlinear system.
%This will serve as a bound on the nonlinearity-induced ``noise'' affecting the ideal data of the linearized dynamics.

Let $\{u_k,x_k,y_k\}_{k\in\mathbb{I}_{\geq0}}$ be an arbitrary trajectory of the nonlinear system~\eqref{eq:sys_NL}.
For some $t\geq N$, we write $\{x_k'(t)\}_{k=-N}^n$
%$\{\xi_k'(t)\}_{k=-N+n}^{n}$, 
and $\{y_k'(t)\}_{k=-N}^{n-1}$ for the state 
%extended state, 
and output corresponding to the dynamics linearized at $x_t$ resulting from an application of the input $\{u_k\}_{k=t-N}^{t+n-1}$ with the initial state of the nonlinear system $x_{t-N}$ at time $t-N$, i.e., $x_{-N}'(t)=x_{t-N}$ and
\begin{align}\label{eq:data_lin}
x_{k+1}'(t)&=f_{x_t}(x_k'(t),u_{t+k}),\>\>
%A_{x_t}x_k'(t)+Bu_{t+k}+e_{x_t},\\
y_k'(t)=h_{x_t}(x_k'(t),u_{t+k})
%C_{x_t}x_k'(t)+Du_{t+k}+r_{x_t}
\end{align}
for $k\in\mathbb{I}_{[-N,n-1]}$.
We denote this ``artificial'' input-output data trajectory by $\calD_t\coloneqq\{u_{t+k},y_k'(t)\}_{k=-N}^{-1}$ and we define
\begin{align}\label{eq:Hux_NL}
H_{ux,t}\coloneqq\begin{bmatrix}H_{L+n+1}(u_{[t-N,t-1]})\\H_1(x_{[-N,-L-n-1]}'(t))\\\mathbbm{1}_{N-L-n}^\top\end{bmatrix}.
\end{align}
The following result provides a bound on the difference between the (known) output of the nonlinear system $y$ and the (unknown) output of the affine dynamics $y'$.
\begin{lemma}\label{lem:pred_error_NL}
Let Assumption~\ref{ass:NL_sys} hold.
For any compact set $X\subset\mathbb{R}^n$, there exists $c_{\Delta}>0$ such that for any $t\geq N$, 
$k\in\mathbb{I}_{[-N,n-1]}$, and $x_k,x_t\in X$, $\Delta_{t,k}\coloneqq y_{t+k}-y_k'(t)$ satisfies
\begin{align}\label{eq:lem_pred_error_NL}
\lVert\Delta_{t,k}\rVert_2\leq c_{\Delta}\sum_{j=t-N}^{t+k}\lVert x_t-x_j\rVert_2^2.
\end{align}
\end{lemma}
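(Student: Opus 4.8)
The plan is to bound the deviation $\Delta_{t,k} = y_{t+k} - y_k'(t)$ by tracking how the state trajectories of the nonlinear system and of its linearization at $x_t$ diverge from the common initial condition $x_{t-N}$, using the standard second-order (Taylor remainder) estimate for the linearization together with a discrete-time Grönwall-type argument. First I would introduce the state deviation $\delta_k \coloneqq x_{t+k} - x_k'(t)$ for $k \in \mathbb{I}_{[-N, n]}$, noting $\delta_{-N} = 0$. Since $f_0$ is twice continuously differentiable (via Assumption~\ref{ass:NL_sys} and~\cite[Assumption 1]{berberich2021linearpart1}) and $X$ is compact, there is a constant $c_f > 0$ such that the linearization error at $x_t$ satisfies $\lVert f_0(x_{t+k}) - f_{x_t}(x_{t+k}, 0) + B\,0 \rVert_2 = \lVert f_0(x_{t+k}) - A_{x_t} x_{t+k} - e_{x_t} \rVert_2 \leq c_f \lVert x_t - x_{t+k} \rVert_2^2$ for all $x_{t+k}, x_t \in X$; here the control-affine structure is essential because the $Bu$ terms in the nonlinear and linearized dynamics are identical and cancel. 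Similarly $\lVert h_0(x_{t+k}) - C_{x_t} x_{t+k} - r_{x_t} \rVert_2 \leq c_h \lVert x_t - x_{t+k} \rVert_2^2$.

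Next I would set up the recursion for $\delta_k$. Writing $x_{t+k+1} = f_0(x_{t+k}) + B u_{t+k}$ and $x_{k+1}'(t) = A_{x_t} x_k'(t) + B u_{t+k} + e_{x_t}$, subtracting gives
\begin{align*}
\delta_{k+1} = A_{x_t}\delta_k + \big( f_0(x_{t+k}) - A_{x_t} x_{t+k} - e_{x_t} \big),
\end{align*}
so $\lVert \delta_{k+1} \rVert_2 \leq \lVert A_{x_t} \rVert_2 \lVert \delta_k \rVert_2 + c_f \lVert x_t - x_{t+k} \rVert_2^2$. Using that $A_{x_t}$ is uniformly bounded on the compact set (say $\lVert A_{x_t} \rVert_2 \leq a$) and unrolling this linear recursion from $k = -N$ (where $\delta_{-N} = 0$), I obtain
\begin{align*}
\lVert \delta_k \rVert_2 \leq c_f \sum_{j=-N}^{k-1} a^{\,k-1-j} \lVert x_t - x_{t+j} \rVert_2^2 \leq c_f \max\{1, a\}^{N+n} \sum_{j=t-N}^{t+k-1} \lVert x_t - x_j \rVert_2^2.
\end{align*}
Then $\Delta_{t,k} = y_{t+k} - y_k'(t) = h_0(x_{t+k}) + D u_{t+k} - C_{x_t} x_k'(t) - D u_{t+k} - r_{x_t} = C_{x_t}\delta_k + \big( h_0(x_{t+k}) - C_{x_t} x_{t+k} - r_{x_t} \big)$, whence $\lVert \Delta_{t,k} \rVert_2 \leq \lVert C_{x_t} \rVert_2 \lVert \delta_k \rVert_2 + c_h \lVert x_t - x_{t+k} \rVert_2^2$. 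Substituting the bound on $\lVert \delta_k \rVert_2$ and absorbing all constants (the uniform bounds on $\lVert A_{x_t} \rVert_2$, $\lVert C_{x_t} \rVert_2$, the Taylor constants $c_f, c_h$, and the factor $\max\{1,a\}^{N+n}$, all finite by compactness of $X$ and continuity) into a single $c_\Delta$ yields $\lVert \Delta_{t,k} \rVert_2 \leq c_\Delta \sum_{j=t-N}^{t+k} \lVert x_t - x_j \rVert_2^2$, which is~\eqref{eq:lem_pred_error_NL} (the final index $t+k$ rather than $t+k-1$ only makes the right-hand side larger).

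The routine parts here are the Taylor estimate and the geometric-sum bookkeeping. The main obstacle — really the only subtlety — is making sure the constant $c_\Delta$ can be chosen uniformly in $t$ and $k$: this is why the statement restricts to $x_j, x_t \in X$ for a fixed compact $X$, so that the operator norms of $A_{x_t}$, $C_{x_t}$ and the quadratic Taylor constants are bounded uniformly over all linearization points arising in $X$; and it is why the horizon $N + n$ is fixed, so that $\max\{1,a\}^{N+n}$ is a genuine constant rather than growing. I would also remark that the control-affine assumption on~\eqref{eq:sys_NL} is what keeps the input from entering the error bound — this is worth stating explicitly since it is exactly the structural feature that the linearization-based data-driven scheme exploits.
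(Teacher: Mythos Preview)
Your proof is correct and follows essentially the same approach as the paper: both use the quadratic Taylor-remainder bound on the linearization error (the paper invokes \cite[Inequalities (7) and (8)]{berberich2021linearpart1} for this, which is exactly your estimate $\lVert f_0(x_{t+k}) - A_{x_t}x_{t+k} - e_{x_t}\rVert_2 \leq c_f\lVert x_t - x_{t+k}\rVert_2^2$), and both unroll the state-error recursion and absorb the uniformly bounded Jacobian products into a single constant. The only cosmetic difference is that your Gr\"onwall unrolling produces powers $A_{x_t}^{\,k-1-j}$ of the fixed Jacobian, whereas the paper writes products of Jacobians at varying points; both are handled identically via the uniform bound on $\lVert A_x\rVert_2$ over the compact set $X$.
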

\begin{proof}
Using $f(x,u)=f_x(x,u)$, $h(x,u)=h_x(x,u)$, we have
\begin{align}\label{eq:lem_pred_error_NL_proof_x}
x_{k+1}&=A_{x_k}x_k+Bu_k+e_{x_k}\\\nonumber
&=A_{x_t}x_k+Bu_k+e_{x_t}+\underbrace{(A_{x_k}-A_{x_t})x_k+e_{x_k}-e_{x_t}}_{\Delta_{x,k}\coloneqq},\\
\label{eq:lem_pred_error_NL_proof_y}
y_k&=C_{x_k}x_k+Du_k+r_{x_k}\\\nonumber
&=C_{x_t}x_k+Du_k+r_{x_t}+\underbrace{(C_{x_k}-C_{x_t})x_k+r_{x_k}-r_{x_t}}_{\Delta_{y,k}\coloneqq},
\end{align}
for any $k\in\mathbb{I}_{[t-N,t+n-1]}$.
By repeatedly applying~\eqref{eq:lem_pred_error_NL_proof_x}, we obtain, for $k\in\mathbb{I}_{[t-N+1,t+n-1]}$,
$
x_k=x_{k-t}'(t)+\Delta_{x,k-1}+\sum_{j=t-N}^{k-2}A_{x_{k-1}}\cdot\dots\cdots A_{x_{j+1}}\Delta_{x,j}.
$
In combination with~\eqref{eq:lem_pred_error_NL_proof_y}, this yields
\begin{align*}
y_{k}&=y_{k-t}'(t)+\Delta_{y,k}+C_{x_t}\Delta_{x,k-1}\\
&+C_{x_{t}}\sum_{j=t-N}^{k-2}A_{x_{k-1}}\cdot\ldots\cdot A_{x_{j+1}}\Delta_{x,j}= y_{k-t}'(t)+\Delta_{t,k-t}.
\end{align*}
This implies
\begin{align}\label{eq:lem_pred_error_NL_proof_Delta}
\Delta_{t,k}&=\Delta_{y,t+k}+C_{x_{t+k}}\Delta_{x,t+k-1}\\\nonumber
&\quad+C_{x_{t+k}}\sum_{j=t-N}^{t+k-2}A_{x_{t+k-1}}\cdot\ldots\cdot A_{x_{j+1}}\Delta_{x,j}
\end{align}
for $k\in\mathbb{I}_{[-N,n-1]}$.
Using smoothness of the dynamics in~\eqref{eq:sys_NL}, we can apply~\cite[Inequalities (7) and (8)]{berberich2021linearpart1} to derive
\begin{align*}
\lVert\Delta_{x,k}\rVert_{2}&=\lVert f_{x_k}(x_k,u_k)-f_{x_t}(x_k,u_k)\rVert_{2}\leq c_X\lVert x_t-x_k\rVert_2^2,\\
\lVert\Delta_{y,k}\rVert_{2}&=\lVert h_{x_k}(x_k,u_k)-h_{x_t}(x_k,u_k)\rVert_{2}\leq c_{Xh}\lVert x_t-x_k\rVert_2^2
\end{align*}
for $k\in\mathbb{I}_{[t-N,t+n-1]}$ and with suitable $c_X,c_{Xh}>0$.
Using these inequalities to bound $\Delta_{t,k}$ in~\eqref{eq:lem_pred_error_NL_proof_Delta} and exploiting that, on the compact set $X$, the Jacobians $A_{x}$ and $C_{x}$ are uniformly bounded, there exists $c_{\Delta}>0$ such that~\eqref{eq:lem_pred_error_NL} holds.
\end{proof}

Lemma~\ref{lem:pred_error_NL} shows that the difference between $y_{t+k}$ and $y_k'(t)$, i.e., the ``output measurement noise'' affecting the ideal data of the linearized dynamics,
%in Problem~\eqref{eq:DD_MPC_NL}, 
is bounded by the squared distance of $x_t$ to the past states.
%This means that, if the closed loop does not move too rapidly, then the prediction model of the proposed MPC scheme is an accurate approximation of the dynamics linearized at $x_t$.
This means that, if the state trajectory does not move too rapidly, then the data collected from the nonlinear system~\eqref{eq:sys_NL} are close to those of the linearized dynamics and can therefore be used to (approximately) parametrize trajectories of the linearized dynamics via Theorem~\ref{thm:willems_affine}.

\section{Data-driven MPC for nonlinear systems}\label{sec:NL_MPC}

In this section, we present a data-driven MPC scheme to control unknown nonlinear systems with stability guarantees.
After introducing the MPC scheme in Section~\ref{subsec:NL_scheme}, we show a useful continuity property of the solution of the underlying optimal control problem (Section~\ref{subsec:NL_continuity}).
In Section~\ref{subsec:NL_guarantees}, we then prove practical exponential stability of the closed loop.
%The results in this section are built on the MPC approach based on linearized models in our companion paper~\cite{berberich2021linearpart1} with the additional difficulty that the model of the linearization is unknown and only past input-output measurements are available.

\subsection{MPC scheme}\label{subsec:NL_scheme}

\begin{subequations}\label{eq:DD_MPC_NL}
At time $t\geq N$, given past $N$ input-output measurements $\{u_k,y_k\}_{k=t-N}^{t-1}$ of the nonlinear system~\eqref{eq:sys_NL}, we define the following open-loop optimal control problem
\begin{align}\label{eq:DD_MPC_NL_cost}
\underset{\substack{\alpha(t),\sigma(t)\\u^{\rms}(t),y^{\rms}(t)}}{\min}&\sum_{k=-n}^{L}
\lVert\bar{u}_k(t)-u^{\rms}(t)\rVert_R^2+\lVert\bar{y}_k(t)-y^{\rms}(t)\rVert_Q^2\\\nonumber
+\lVert y^{\rms}&(t)-y^\rmr\rVert_S^2+\lambda_\alpha\lVert\alpha(t)-\alpha^{\rms\rmr}_{\rmlin}(\calD_t)\rVert_2^2+\lambda_\sigma\lVert\sigma(t)\rVert_{2}^2\\
\label{eq:DD_MPC_NL_hankel} \text{s.t.}\>\> &\>\begin{bmatrix}
\bar{u}(t)\\\bar{y}(t)+\sigma(t)\\1\end{bmatrix}=\begin{bmatrix}H_{L+n+1}\left(u_{[t-N,t-1]}\right)\\H_{L+n+1}\left(y_{[t-N,t-1]}\right)\\\mathbbm{1}_{N-L-n}^\top\end{bmatrix}\alpha(t),\\\label{eq:DD_MPC_NL_init}
&\>\begin{bmatrix}\bar{u}_{[-n,-1]}(t)\\\bar{y}_{[-n,-1]}(t)\end{bmatrix}=\begin{bmatrix}u_{[t-n,t-1]}\\y_{[t-n,t-1]}\end{bmatrix},
\\\label{eq:DD_MPC_NL_TEC}
&\>\begin{bmatrix}\bar{u}_{[L-n,L]}(t)\\\bar{y}_{[L-n,L]}(t)\end{bmatrix}=
\begin{bmatrix}\bbone_{n+1}\otimes u^{\rms}(t)\\
\bbone_{n+1}\otimes y^{\rms}(t)\end{bmatrix},\\\label{eq:DD_MPC_NL_constraints}
&\>\bar{u}_k(t)\in\mathbb{U},\>k\in\mathbb{I}_{[0,L]},\>u^{\rms}(t)\in\mathbb{U}^{\rms}.
\end{align}
\end{subequations}
Here, $\bar{u}(t)\in\mathbb{R}^{m(L+n+1)}$ and $\bar{y}(t)\in\mathbb{R}^{p(L+n+1)}$ denote the input and output trajectory predicted at time $t$ with elements $\bar{u}_k(t)\in\mathbb{R}^{m}$ and $\bar{y}_k(t)\in\mathbb{R}^p$ at time $k$, respectively.
The cost~\eqref{eq:DD_MPC_NL_cost} contains a tracking term with weights $Q\succ0$, $R\succ0$ w.r.t.\ an artificial setpoint $(u^\rms(t),y^\rms(t))$ which is optimized online and whose distance to the setpoint $y^\rmr$ is penalized in the cost by the weight $S\succ0$.
The fact that the cost~\eqref{eq:DD_MPC_NL_cost} is summed over $k\in\mathbb{I}_{[-n,L]}$ (instead of $k\in\mathbb{I}_{[0,L]}$ as, e.g., in~\cite{berberich2021guarantees,berberich2020tracking}) simplifies the theoretical analysis and represents a weighting of the initial (extended) state, analogously to model-based tracking MPC~\cite{berberich2021linearpart1,limon2018nonlinear,koehler2020nonlinear}.
%is analogous to model-based MPC, where the initial condition usually enters the cost.
%We assume that the prediction horizon $L$ satisfies $L\geq2n$.

Similar to existing data-driven MPC schemes for linear systems, e.g.,~\cite{coulson2019deepc,coulson2021distributionally,berberich2021guarantees,bongard2021robust}, Problem~\eqref{eq:DD_MPC_NL} uses a prediction model based on Hankel matrices~\eqref{eq:DD_MPC_NL_hankel}.
In contrast to these works, the data used for prediction are updated at any time step using the last $N$ measurements of the \emph{nonlinear} system.
Hence, the prediction model of Problem~\eqref{eq:DD_MPC_NL} can be seen as an approximation of the affine dynamics resulting from the linearization at $x_t$, cf. Section~\ref{subsec:prelim_bound}, which in turn provides a local approximation of the nonlinear dynamics~\eqref{eq:sys_NL}.
In order to account for the model mismatch due to the nonlinearity, the slack variable $\sigma(t)$ is introduced and the cost~\eqref{eq:DD_MPC_NL_cost} additionally contains regularization terms of $\alpha(t)$ and $\sigma(t)$ with parameters $\lambda_{\alpha},\lambda_{\sigma}>0$.
We note that analogous ingredients are employed to handle noise in linear data-driven MPC~\cite{coulson2019deepc,coulson2021distributionally,berberich2021guarantees,bongard2021robust}.

In~\eqref{eq:DD_MPC_NL_init}, the first $n$ components of the predictions are set to the past $n$ input-output measurements in order to specify initial conditions, compare~\cite{markovsky2008data}.
We note that $n$ can be replaced by any upper bound on the system order (or, more specifically, on the system's lag), i.e., the application of the proposed MPC does \emph{not} require accurate knowledge of the system order.
Moreover,~\eqref{eq:DD_MPC_NL_TEC} represents the terminal equality constraint w.r.t.\ the artificial equilibrium $(u^\rms(t),y^\rms(t))$.
It is defined over $n+1$ steps since this ensures that $(u^\rms(t),y^\rms(t))$ is an (approximate) equilibrium of the dynamics linearized at $x_t$, compare~\cite[Definition 3]{berberich2021guarantees}.
In order to parametrize (approximate) trajectories of the \emph{affine} dynamics corresponding to the linearization at $x_t$, the last line of~\eqref{eq:DD_MPC_NL_hankel} implies that $\alpha(t)$ sums up to $1$, compare Theorem~\ref{thm:willems_affine}.
Further, the scheme contains constraints on the input equilibrium and on the input trajectory in~\eqref{eq:DD_MPC_NL_constraints}.
Note that~\eqref{eq:DD_MPC_NL} is a strictly convex QP which can be solved efficiently.

\begin{remark}\label{rk:alpha_sr}
Inspired by~\cite{elokda2021quadcopters}, the regularization of $\alpha(t)$ is not w.r.t.\ zero but depends on $\alpha^{\rms\rmr}_{\rmlin}(\calD_t)$ since we want to track the generally non-zero equilibrium $(u^{\rms\rmr},y^{\rms\rmr})$.
%Thus, an implementation of Problem~\eqref{eq:DD_MPC_NL} requires knowledge of $\alpha^{\rms\rmr}_{\rmlin}(\calD_t)$.
Note that $\alpha^{\rms\rmr}_{\rmlin}(\calD_t)$ can be (approximately) computed as the least-squares solution of~\eqref{eq:opt_reach_equil_linearized} by inserting the past $N$ input-output measurements $\{u_k,y_k\}_{k=t-N}^{t-1}$ of the nonlinear system~\eqref{eq:sys_NL}.
Since these measurements are not a trajectory of the linearized dynamics, cf. Section~\ref{subsec:prelim_bound}, it is beneficial for practical purposes to solve the following robust version of~\eqref{eq:opt_reach_equil_linearized} with parameters $\lambda_{\alpha}^\rms,\lambda_{\sigma}^\rms>0$:
\begin{align}\label{eq:opt_reach_equil_linearized_robust}
&\min_{u^{\rms}\in\mathbb{U}^{\rms},y^{\rms},\alpha^{\rms},\sigma^{\rms}}\lVert y^\rms-y^\rmr\rVert_S^2+\lambda_{\alpha}^\rms\lVert\alpha^\rms\rVert_2^2+\lambda_{\sigma}^\rms\lVert\sigma^\rms\rVert_2^2\\\nonumber
\text{s.t.}\>\>&\begin{bmatrix}H_{L+n+1}(u_{[t-N,t-1]})\\H_{L+n+1}(y_{[t-N,t-1]})\\\mathbbm{1}_{N-L-n}^\top\end{bmatrix}\alpha^{\rms}=\begin{bmatrix}\bbone_{L+n+1}\otimes u^{\rms}\\\bbone_{L+n+1}\otimes y^{\rms}+\sigma^\rms\\1\end{bmatrix}.
\end{align}
If an approximation $\alpha^\rms\text{$'$}$ of $\alpha^{\rms\rmr}_{\rmlin}(\calD_t)$ with $\lVert\alpha^\rms\text{$'$}-\alpha^{\rms\rmr}_{\rmlin}(\calD_t)\rVert_2\leq c$ for some $c>0$ is known, our theoretical results remain true, albeit with more conservative bounds which deteriorate for increasing values of $c$.
Since $\alpha^{\rms\rmr}_{\rmlin}(\calD_t)$ is uniformly bounded, this is the case for any uniformly bounded $\alpha^\rms\text{$'$}$ (e.g., $\alpha^\rms\text{$'$}=0$).
%For the numerical example in Section~\ref{sec:example}, we observed good practical performance when replacing $\alpha^{\rms\rmr}_{\rmlin}(\calD_t)$ by $\alpha^*(t-n)$.
%In particular, the presented results hold qualitatively for $\alpha^\rms\text{$'$}=0$.
\end{remark}

%Finally, Problem~\eqref{eq:DD_MPC_NL} requires noise-free input-output data of the nonlinear system~\eqref{eq:sys_NL}.
%This is mainly assumed for simplicity and we conjecture that an extension of our results to practical stability w.r.t.\ noise affecting the measured output is straightforward.

We denote the optimal solution of~\eqref{eq:DD_MPC_NL} at time $t$ by $\bar{u}^*(t)$, $\bar{y}^*(t)$, $\alpha^*(t)$, $\sigma^*(t)$, $u^{\rms*}(t)$, $y^{\rms*}(t)$, and the closed-loop input, state, and output at time $t$ by $u_t$, $x_t$, and $y_t$, respectively.
Further, we write $J_L^*(\xi_t)$ for the corresponding optimal cost.
Problem~\eqref{eq:DD_MPC_NL} is applied in a multi-step fashion, see Algorithm~\ref{alg:MPC_n_step_NL}.

\begin{algorithm}
\begin{Algorithm}\label{alg:MPC_n_step_NL}
\normalfont{\textbf{Nonlinear Data-Driven MPC Scheme}}\\
\textbf{Offline:}
Choose upper bound on system order $n$, prediction horizon $L$, cost matrices $Q,R,S\succ0$, regularization parameters $\lambda_{\alpha},\lambda_{\sigma}>0$, constraint sets $\mathbb{U},\mathbb{U}^{\rms}$, setpoint $y^\rmr$, and generate data $\{u_k,y_k\}_{k=0}^{N-1}$.\\
\textbf{Online:}
\begin{enumerate}
\item[1)] At time $t\geq N$, compute $\alpha^{\rms\rmr}_{\rmlin}(\calD_t)$ by solving~\eqref{eq:opt_reach_equil_linearized} or its approximation~\eqref{eq:opt_reach_equil_linearized_robust}.
\item[2)] Solve~\eqref{eq:DD_MPC_NL} and apply the first $n$ input components $u_{t+k}=\bar{u}_k^*(t)$, $k\in\mathbb{I}_{[0,n-1]}$.
\item[3)] Set $t=t+n$ and go back to 1).
\end{enumerate}
\end{Algorithm}
\end{algorithm}
Considering a multi-step MPC scheme instead of a standard (one-step) MPC scheme simplifies the theoretical analysis with terminal equality constraints due to a local controllability argument in the proof, similar to the model-based MPC in our companion paper~\cite{berberich2021linearpart1}.
%We conjecture that similar results can be derived locally close to the steady-state manifold for a one-step MPC scheme, provided that a stronger version of the controllability assumption~\cite[Assumption 3]{berberich2021linearpart1} holds, see~\cite{berberich2021linearpart1} for details.

Note that Algorithm~\ref{alg:MPC_n_step_NL} allows to control unknown nonlinear systems based only on measured data and without any model knowledge except for a (potentially rough) upper bound on the system order.
Moreover, it only requires solving one (or two, if $\alpha^{\rms\rmr}_{\rmlin}(\calD_t)$ is computed online, see Step 1)) strictly convex QPs.
In the remainder of this section, we prove that, under suitable assumptions, the closed loop under Algorithm~\ref{alg:MPC_n_step_NL} is practically exponentially stable.
Our analysis builds on the MPC approach based on linearized models in our companion paper~\cite{berberich2021linearpart1}.
Similar to~\cite{berberich2021linearpart1}, the key idea is that, if $\lambda_{\max}(S)$ is sufficiently small and the initial state is sufficiently close to $\calZ_\rmx^\rms$, then the data-driven prediction model in~\eqref{eq:DD_MPC_NL_hankel} provides a good approximation of the nonlinear dynamics in closed loop, thus allowing to prove closed-loop stability.

\subsection{Continuity of the optimal input and cost}\label{subsec:NL_continuity}
In this section, we present a key technical result for our closed-loop analysis bounding the distance of the optimal input/cost of Problem~\eqref{eq:DD_MPC_NL} to the corresponding optimal input/cost with ``ideal'' data of the linearized dynamics.
\begin{subequations}\label{eq:DD_MPC_NL_nominal}
Given a vector $\tilde{\sigma}=\begin{bmatrix}\tilde{\sigma}_{\mathrm{init}}\\\tilde{\sigma}_{\mathrm{dyn}}\end{bmatrix}\in\mathbb{R}^{p(L+2n+1)+mn}$, we define the optimization problem
\begin{align}\label{eq:DD_MPC_NL_nominal_cost}
\underset{\substack{\alpha(t)\\u^{\rms}(t),y^{\rms}(t)}}{\min}&\sum_{k=-n}^{L}
\lVert\bar{u}_k(t)-u^{\rms}(t)\rVert_R^2+\lVert\bar{y}_k(t)-y^{\rms}(t)\rVert_Q^2\\\nonumber
&+\lVert y^\rms(t)-y^\rmr\rVert_S^2+\lambda_{\alpha}\lVert\alpha(t)-\alpha^{\rms\rmr}_{\rmlin}(\calD_t)\rVert_2^2\\
\label{eq:DD_MPC_NL_nominal_hankel} \text{s.t.}\>\> &\>\begin{bmatrix}
\bar{u}(t)\\\bar{y}(t)+\tilde{\sigma}_{\mathrm{dyn}}\\1\end{bmatrix}=\begin{bmatrix}H_{L+n+1}\left(u_{[t-N,t-1]}\right)\\H_{L+n+1}\left(y'(t)\right)\\\mathbbm{1}_{N-L-n}^\top\end{bmatrix}\alpha(t),\\\label{eq:DD_MPC_NL_nominal_init}
&\>\begin{bmatrix}\bar{u}_{[-n,-1]}(t)\\\bar{y}_{[-n,-1]}(t)\end{bmatrix}=
\begin{bmatrix}u_{[t-n,t-1]}'\\y_{[t-n,t-1]}'\end{bmatrix}+\tilde{\sigma}_{\mathrm{init}},
\\\label{eq:DD_MPC_NL_nominal_TEC}
&\>\begin{bmatrix}\bar{u}_{[L-n,L]}(t)\\\bar{y}_{[L-n,L]}(t)\end{bmatrix}=
\begin{bmatrix}\bbone_{n+1}\otimes u^{\rms}(t)\\
\bbone_{n+1}\otimes y^{\rms}(t)\end{bmatrix},\\\label{eq:DD_MPC_NL_nominal_constraints}
&\>\bar{u}_k(t)\in\mathbb{U},\>k\in\mathbb{I}_{[0,L]},\>u^{\rms}(t)\in\mathbb{U}^{\rms}.
\end{align}
\end{subequations}
We denote the optimal solution of~\eqref{eq:DD_MPC_NL_nominal} with\footnote{We require the flexibility of choosing $\tilde{\sigma}\neq0$ for a technical argument in the proof of Proposition~\ref{prop:continuity} in Appendix~\ref{sec:app_prop_continuity_proof}.} $\tilde{\sigma}=0$ by $\check{\alpha}^*(t)$, $\check{u}^{\rms*}(t)$, $\check{y}^{\rms*}(t)$, $\check{u}^*(t)$, $\check{y}^*(t)$.
Moreover, we write $\check{J}_L^*(\xi_t',\mathcal{D}_t)$ for the optimal cost to emphasize its dependence on the initial condition $\xi_t'$ as well as the data set $\mathcal{D}_t=\{u_{t+k},y_k'(t)\}_{k=-N}^{-1}$.
For $\tilde{\sigma}=0$, the constraints of~\eqref{eq:DD_MPC_NL_nominal} correspond to those of Problem~\eqref{eq:DD_MPC_NL} with $\sigma(t)=0$ and replacing the data $\{u_k,y_k\}_{k=t-N}^{t-1}$ and initial condition $\xi_t$ of the nonlinear system by that of the dynamics linearized at $x_t$, i.e., $\{u_{t+k},y_k'(t)\}_{k=-N}^{-1}$ and $\xi_t'$.
Thus, according to Theorem~\ref{thm:willems_affine}, any $\bar{u}(t)$, $\bar{y}(t)$ satisfying the constraints of Problem~\eqref{eq:DD_MPC_NL_nominal} with $\tilde{\sigma}=0$ are a trajectory of the dynamics linearized at $x_t$.
Note that the initial conditions $\{u_{t+k}',y_{t+k}'\}_{k=-n}^{-1}$ in~\eqref{eq:DD_MPC_NL_nominal_init} are in general different from $\{u_{t+k},y'_k(t)\}_{k=-n}^{-1}$ in~\eqref{eq:DD_MPC_NL_nominal_hankel}, although both correspond to the dynamics linearized at $x_t$ and both are close to the input-output trajectory $\{u_{t+k},y_{t+k}\}_{k=-n}^{-1}$ of the nonlinear system.
\begin{assumption}\label{ass:LICQ}
(LICQ)
For any $t\geq N$, Problem~\eqref{eq:DD_MPC_NL_nominal} satisfies a linear independence constraint qualification (LICQ), i.e., the row entries of the equality and active inequality constraints are linearly independent.
\end{assumption}
Assumption~\ref{ass:LICQ} is required for a technical step in the following result.
Such an LICQ assumption is common in linear MPC, compare~\cite{bemporad2002explicit}, and we conjecture that it may be possible to relax it at the price of a more involved analysis.
We now derive a bound on the difference between the \emph{nominal} optimal cost $\check{J}_L^*(\xi_t',\mathcal{D}_t)$ and input $\check{u}^*(t)$ corresponding to Problem~\eqref{eq:DD_MPC_NL_nominal} with $\tilde{\sigma}=0$ and the \emph{perturbed} optimal cost $J_L^*(\xi_t)$ and input $\bar{u}^*(t)$ corresponding to Problem~\eqref{eq:DD_MPC_NL}.
\begin{proposition}\label{prop:continuity}
Let Assumptions~\ref{ass:NL_sys} and~\ref{ass:LICQ} hold and suppose the data $\{u_{t+k}\}_{k=-N}^{-1}$, $\{x_k'(t)\}_{k=-N}^{-L}$ (cf.~\eqref{eq:data_lin}) are persistently exciting of order $L+n+1$, compare Definition~\ref{def:pe}.
Moreover, for some $\bar{\varepsilon}>0$, consider 
\begin{align}\label{eq:prop_continuity_noise_bound}
&\lVert\xi_t-\xi_t'\rVert_{2}\leq\bar{\varepsilon},\>\>\lVert y_{t+k}-y_k'(t)\rVert_{2}\leq\bar{\varepsilon}\>\>\>\forall k\in\mathbb{I}_{[-N,-1]},\\\label{eq:prop_continuity_lambda_def}
&\lambda_{\alpha}=\bar{\lambda}_{\alpha}\bar{\varepsilon}^{\beta_{\alpha}},\>\>\lambda_{\sigma}=\frac{\bar{\lambda}_{\sigma}}{\bar{\varepsilon}^{\beta_{\sigma}}}
\end{align}
for some $\bar{\lambda}_{\alpha},\bar{\lambda}_{\sigma},\beta_{\alpha},\beta_{\sigma}>0$ with $\beta_{\alpha}+2\beta_{\sigma}<2$, where $\xi_t'$ satisfies~\eqref{eq:NL_trafo_xi_x_lin} and $y'(t)$ is the output of the dynamics linearized at $x_t$ with input $u_{[t-N,t-1]}$ and initial condition $x_{t-N}$, compare~\eqref{eq:data_lin}.
\begin{itemize}
\item[(i)] There exist $\bar{\varepsilon}_{\max},c_{\rmJ,\rma},c_{\rmJ,\rmb}>0$ such that, if $\bar{\varepsilon}\leq\bar{\varepsilon}_{\max}$, \eqref{eq:prop_continuity_noise_bound}, and~\eqref{eq:prop_continuity_lambda_def} hold, then
\begin{align}\label{eq:prop_continuity_cost}
J_L^*(\xi_t)&\leq\left(1+c_{\rmJ,\rma}\bar{\varepsilon}^{\beta_{\sigma}}\right)\check{J}_L^*(\xi_t',\mathcal{D}_t)\\\nonumber
&\quad+c_{\rmJ,\rmb}\bar{\varepsilon}^{2-\beta_\sigma}\left(1+\lVert H_{ux,t}^\dagger\rVert_2^2\bar{\varepsilon}^{\beta_{\alpha}}\right)
\end{align}
with $H_{ux,t}$ as in~\eqref{eq:Hux_NL}.
\item[(ii)] For any $\bar{J}>0$, there exist $\bar{\varepsilon}_{\max}>0$, $\beta_{\rmu}\in\mathcal{K}_{\infty}$ such that, if $\check{J}_L^*(\xi_t',\mathcal{D}_t)\leq\bar{J}$, $\bar{\varepsilon}\leq\bar{\varepsilon}_{\max}$,~\eqref{eq:prop_continuity_noise_bound}, and~\eqref{eq:prop_continuity_lambda_def} hold, then
\begin{align}\label{eq:prop_continuity}
\lVert \bar{u}^*(t)-\check{u}^*(t)\rVert_2\leq\beta_{\rmu}(\bar{\varepsilon}).
\end{align}
\end{itemize}
\end{proposition}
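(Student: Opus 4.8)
\emph{Proof plan.} The plan is to prove (i) by a feasibility (candidate) argument and (ii) by combining (i) with its symmetric counterpart and the strict convexity of~\eqref{eq:DD_MPC_NL}, in the spirit of the noisy-data robustness analysis for linear data-driven MPC in~\cite{berberich2021guarantees}.

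\textbf{Part (i).} I would start from the optimal solution $\check{\alpha}^*(t),\check{u}^{\rms*}(t),\check{y}^{\rms*}(t)$ of~\eqref{eq:DD_MPC_NL_nominal} with $\tilde{\sigma}=0$ and the induced predictions $\check{u}^*(t),\check{y}^*(t)$, and build a feasible candidate for~\eqref{eq:DD_MPC_NL}. Since~\eqref{eq:DD_MPC_NL} and~\eqref{eq:DD_MPC_NL_nominal} share the \emph{same} input data $u_{[t-N,t-1]}$, the Hankel relation~\eqref{eq:DD_MPC_NL_hankel} maps $\check{\alpha}^*(t)$ to the same predicted input; the only discrepancies are the output blocks of the Hankel matrix ($y_{[t-N,t-1]}$ versus $y'(t)$) and the initial condition ($\xi_t$ versus $\xi_t'$), which are $O(\bar{\varepsilon})$ by~\eqref{eq:prop_continuity_noise_bound} (an assumption that Lemma~\ref{lem:pred_error_NL} certifies is attainable whenever the closed-loop state stays in a compact set). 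I would keep $u^\rms(t)=\check{u}^{\rms*}(t)$, $y^\rms(t)=\check{y}^{\rms*}(t)$, correct $\alpha(t)=\check{\alpha}^*(t)+\delta\alpha$ with $\delta\alpha=H_{ux,t}^\dagger(\cdot)$ and hence $\lVert\delta\alpha\rVert_2=O(\lVert H_{ux,t}^\dagger\rVert_2\bar{\varepsilon})$ so that the input initial condition~\eqref{eq:DD_MPC_NL_init} and the summation constraint in~\eqref{eq:DD_MPC_NL_hankel} are restored, pick $\bar{y}_{[-n,-1]}(t)=y_{[t-n,t-1]}$ and $\bar{y}_{[0,L-n-1]}(t)=\check{y}^*_{[0,L-n-1]}(t)$ (so that the terminal constraint~\eqref{eq:DD_MPC_NL_TEC} is inherited from the nominal solution), and let $\sigma(t)$ absorb the residual in~\eqref{eq:DD_MPC_NL_hankel}, giving $\lVert\sigma(t)\rVert_2=O(\bar{\varepsilon}(1+\lVert\check{\alpha}^*(t)\rVert_2))$. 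Feasibility of this candidate w.r.t.\ the input constraints~\eqref{eq:DD_MPC_NL_constraints} up to the $O(\lVert H_{ux,t}^\dagger\rVert_2\bar{\varepsilon})$ perturbation, and the legitimacy of the substitution itself, is where Assumption~\ref{ass:LICQ} and the flexibility $\tilde{\sigma}\neq0$ in~\eqref{eq:DD_MPC_NL_nominal} enter, via a parametric-continuity argument in the data.

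Evaluating the cost of this candidate using~\eqref{eq:ab_ineq}--\eqref{eq:ab_ineq2}: the tracking terms change by $O(\bar{\varepsilon}(1+\sqrt{\check{J}_L^*(\xi_t',\calD_t)}))$, the $\alpha$-regularization by $O(\lambda_\alpha\lVert\delta\alpha\rVert_2^2+\lambda_\alpha\lVert\delta\alpha\rVert_2\lVert\check{\alpha}^*(t)\rVert_2)$, and the slack term adds $\lambda_\sigma\lVert\sigma(t)\rVert_2^2$. Bounding $\lVert\check{\alpha}^*(t)-\alpha^{\rms\rmr}_{\rmlin}(\calD_t)\rVert_2^2\le\check{J}_L^*(\xi_t',\calD_t)/\lambda_\alpha$, using the standing uniform bound on $\alpha^{\rms\rmr}_{\rmlin}(\calD_t)$, then substituting $\lambda_\alpha=\bar{\lambda}_\alpha\bar{\varepsilon}^{\beta_\alpha}$ and $\lambda_\sigma=\bar{\lambda}_\sigma/\bar{\varepsilon}^{\beta_\sigma}$ and collecting powers of $\bar{\varepsilon}$, the $\check{J}_L^*$-proportional error scales as $\bar{\varepsilon}^{\beta_\sigma}+\bar{\varepsilon}^{2-\beta_\sigma-\beta_\alpha}$, which is $O(\bar{\varepsilon}^{\beta_\sigma})$ for $\bar{\varepsilon}\le\bar{\varepsilon}_{\max}\le1$ precisely because $\beta_\alpha+2\beta_\sigma<2$, while the remaining additive error is $O(\bar{\varepsilon}^{2-\beta_\sigma}(1+\lVert H_{ux,t}^\dagger\rVert_2^2\bar{\varepsilon}^{\beta_\alpha}))$. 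This yields~\eqref{eq:prop_continuity_cost}.

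\textbf{Part (ii).} Here I would also prove the reverse inequality $\check{J}_L^*(\xi_t',\calD_t)\le J_L^*(\xi_t)+O(\bar{\varepsilon}^\gamma)$ by the symmetric construction: build a candidate for~\eqref{eq:DD_MPC_NL_nominal} (with a suitable $\tilde{\sigma}\neq0$, again invoking Assumption~\ref{ass:LICQ} to pass back to $\tilde{\sigma}=0$) from the optimal solution of~\eqref{eq:DD_MPC_NL}; this is legitimate because $\lVert\sigma^*(t)\rVert_2^2\le J_L^*(\xi_t)/\lambda_\sigma=O(\bar{\varepsilon}^{\beta_\sigma})$ (using $\check{J}_L^*(\xi_t',\calD_t)\le\bar{J}$ and part (i) to bound $J_L^*(\xi_t)$), so the perturbed optimizer already has a vanishing slack that, together with the $O(\bar{\varepsilon})$ data mismatch, can be absorbed by an $O(\lVert H_{ux,t}^\dagger\rVert_2\bar{\varepsilon})$ correction of $\alpha$. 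Combining both directions gives $\lvert J_L^*(\xi_t)-\check{J}_L^*(\xi_t',\calD_t)\rvert\le\beta_J(\bar{\varepsilon})$ for some $\beta_J\in\mathcal{K}_\infty$ (depending on $\bar{J}$ and on the — finite, by the persistence-of-excitation hypothesis — bound on $\lVert H_{ux,t}^\dagger\rVert_2$). Finally, since~\eqref{eq:DD_MPC_NL} is a strictly convex QP whose reduced cost (after eliminating $\bar{u}(t),\bar{y}(t),\sigma(t)$ through the equality constraints, whose regularity follows from Assumption~\ref{ass:LICQ}) has a positive-definite Hessian on the relevant compact set, optimality of $\bar{u}^*(t)$ to within $\beta_J(\bar{\varepsilon})$ of the cost of the part-(i) candidate — which is itself within $O(\lVert H_{ux,t}^\dagger\rVert_2\bar{\varepsilon})$ of $\check{u}^*(t)$ — gives $\lVert\bar{u}^*(t)-\check{u}^*(t)\rVert_2^2=O(\beta_J(\bar{\varepsilon})+\bar{\varepsilon})$, hence~\eqref{eq:prop_continuity} with $\beta_\rmu\in\mathcal{K}_\infty$.

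\textbf{Expected main obstacle.} The delicate part is reconciling the two $O(\bar{\varepsilon})$ discrepancies with the $\bar{\varepsilon}$-dependent weights: the slack must soak up an $O(\bar{\varepsilon})$ data mismatch yet is penalized with the \emph{growing} weight $\lambda_\sigma\sim\bar{\varepsilon}^{-\beta_\sigma}$, while the $\alpha$-correction enforcing feasibility has norm $\sim\lVert H_{ux,t}^\dagger\rVert_2\bar{\varepsilon}$ and is penalized with the \emph{vanishing} weight $\lambda_\alpha\sim\bar{\varepsilon}^{\beta_\alpha}$. Making the error terms still collapse as $\bar{\varepsilon}\to0$ is exactly what forces the exponent condition $\beta_\alpha+2\beta_\sigma<2$, and the quantitative optimizer bound in (ii) additionally requires controlling how fast the strict-convexity modulus of~\eqref{eq:DD_MPC_NL} degrades, which is the reason the LICQ hypothesis and the $\tilde{\sigma}\neq0$ degree of freedom in~\eqref{eq:DD_MPC_NL_nominal} are needed.
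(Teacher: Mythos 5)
Your Part (i) is essentially the paper's argument: the same candidate (actual past inputs/outputs concatenated with the nominal optimal future trajectory, $\alpha$ obtained through $H_{ux,t}^\dagger$ so that the input block and the sum-to-one row are reproduced exactly, the slack absorbing the output residual), the same weighted Young's inequality, and the same exponent bookkeeping driven by $\beta_\alpha+2\beta_\sigma<2$. One correction: LICQ plays no role in Part (i). The candidate's future input is exactly $\check{u}^*_{[0,L]}(t)$, so~\eqref{eq:DD_MPC_NL_constraints} is satisfied with no perturbation; the only components that change are the first $n$, which are pinned by the equality constraint~\eqref{eq:DD_MPC_NL_init}, not by inequalities.

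For Part (ii) you take a genuinely different route. You prove a two-sided cost bound $\lvert J_L^*(\xi_t)-\check{J}_L^*(\xi_t',\calD_t)\rvert\leq\beta_J(\bar{\varepsilon})$ (the reverse direction via a symmetric candidate for the $\tilde{\sigma}\neq0$ problem plus mpQP Lipschitz continuity under LICQ) and then apply strong convexity of~\eqref{eq:DD_MPC_NL} to the Part (i) candidate, whose future input coincides with $\check{u}^*_{[0,L]}(t)$. This is sound and delivers some $\beta_{\rmu}\in\mathcal{K}_\infty$, which is all the proposition claims; your observation that the optimizers of~\eqref{eq:DD_MPC_NL} and~\eqref{eq:DD_MPC_NL_nominal} agree exactly on the constrained first $n$ input components up to the $\bar{\varepsilon}$-mismatch makes the triangle-inequality step clean. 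The paper instead never forms the two-sided cost bound: it introduces the specific perturbation $\tilde{\sigma}_\varepsilon$ in~\eqref{eq:prop_continuity_proof_sigma_tilde_def} so that the optimizer of~\eqref{eq:DD_MPC_NL} is \emph{exactly feasible} for the perturbed nominal problem, sandwiches $\tilde{J}_L\leq J_L^*(\xi_t)\leq\bar{J}_L'$ with the explicit identity $\bar{J}_L'-\tilde{J}_L=\lambda_\sigma\lVert\sigma(t)\rVert_2^2$, applies strong convexity to bound $\lVert\tilde{u}(t)-\bar{u}^*(t)\rVert_2$ by $\lambda_\sigma(\lVert\sigma(t)\rVert_2^2-\lVert\sigma^*(t)\rVert_2^2)$ (the subtracted $\lambda_\sigma\lVert\sigma^*(t)\rVert_2^2$ is what keeps the growing weight from destroying the estimate), and only then uses LICQ/mpQP continuity to relate $\tilde{u}(t)$ to $\check{u}^*(t)$. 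What the paper's decomposition buys is an explicit polynomial rate for $\beta_{\rmu}$ (cf.~\eqref{eq:prop_continuity_proof_strong_convex}--\eqref{eq:prop_continuity_proof_sigma_tilde_bound}), which is consumed quantitatively in Part (iii).c of the proof of Theorem~\ref{thm:NL_stability}; your symmetric version would likely yield a weaker exponent, so if you intend to reuse your $\beta_{\rmu}$ there you would need to track its rate explicitly. Also be careful with the phrase "absorbed by an $O(\lVert H_{ux,t}^\dagger\rVert_2\bar{\varepsilon})$ correction of $\alpha$" in the reverse direction: a correction of $\alpha$ can only reproduce output perturbations lying in the range of the linearized-data Hankel map, which is precisely why the residual must be routed into $\tilde{\sigma}_{\mathrm{dyn}}$ and $\tilde{\sigma}_{\mathrm{init}}$ — you acknowledge this, but the burden of the estimate sits on $\lVert\tilde{\sigma}_\varepsilon\rVert_2$, not on $\alpha$.
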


The proof of Proposition~\ref{prop:continuity} is provided in Appendix~\ref{sec:app_prop_continuity_proof}.
Proposition~\ref{prop:continuity} bounds the difference between the optimal input/cost for the robust MPC problem~\eqref{eq:DD_MPC_NL} with data of the nonlinear system and the optimal input/cost for the nominal MPC problem~\eqref{eq:DD_MPC_NL_nominal} with data of the linearized dynamics, i.e., with $\tilde{\sigma}=0$.
In Part (i) of the proof, we bound the optimal cost $J_L^*(\xi_t)$ in terms of $\check{J}_L^*(\xi_t',\calD_t)$ using a simple candidate solution, i.e., we show that the robust MPC problem~\eqref{eq:DD_MPC_NL} is feasible whenever the nominal problem~\eqref{eq:DD_MPC_NL_nominal} is feasible.
Part (ii), i.e., the proof of~\eqref{eq:prop_continuity}, relies on a combination of strong convexity of Problem~\eqref{eq:DD_MPC_NL_nominal}, another candidate solution for Problem~\eqref{eq:DD_MPC_NL}, and properties of multi-parametric QPs.

For this technical result, we assume that the difference between the data and initial condition of the nonlinear system and that of the linearized dynamics is bounded, compare~\eqref{eq:prop_continuity_noise_bound}.
This condition can be interpreted as a bound on the ``noise'' affecting the data of the linearized dynamics and it is only required for the open-loop analysis of Problem~\eqref{eq:DD_MPC_NL} in Proposition~\ref{prop:continuity}.
In Section~\ref{subsec:NL_guarantees}, we prove that a bound as in~\eqref{eq:prop_continuity_noise_bound} is implicitly enforced by our MPC scheme in closed loop.
Since the bounds~\eqref{eq:prop_continuity_cost} and~\eqref{eq:prop_continuity} depend on the linearization-induced error $\bar{\varepsilon}$, the result can be interpreted as a continuity property of the optimal solution of Problem~\eqref{eq:DD_MPC_NL} w.r.t.\ additive and multiplicative perturbations.
Note that, in contrast to existing closed-loop results on data-driven MPC for linear systems~\cite{berberich2021guarantees,bongard2021robust}, Proposition~\ref{prop:continuity} not only allows for noisy output data in the Hankel matrices but also for additional disturbances affecting the initial extended state.

\begin{remark}\label{rk:affine_motivation}
Proposition~\ref{prop:continuity} plays a crucial role in the theoretical analysis of the proposed data-driven MPC scheme for nonlinear systems.
Beyond its application in Section~\ref{subsec:NL_guarantees}, the result also enables a novel separation-type proof of stability and robustness of data-driven MPC for affine systems with noisy data.
More precisely, according to Proposition~\ref{prop:continuity}, bounded output measurement noise (i.e., $\lVert y_{t+k}-y_k'(t)\rVert_{2}\leq\bar{\varepsilon}$) and initial condition perturbations (i.e., $\lVert\xi_t-\xi_t'\rVert_{2}\leq\bar{\varepsilon}$) in data-driven MPC translate into an input disturbance for the closed loop.
As we show in Appendix~\ref{sec:app_affine_MPC}, the nominal (i.e., noise-free) MPC scheme~\eqref{eq:DD_MPC_NL_nominal} without updating the data in~\eqref{eq:DD_MPC_NL_nominal_hankel} online stabilizes affine systems in the presence of input disturbances.
Combining these two results, we can directly conclude stability of data-driven tracking MPC for affine systems with noisy data and perturbed initial conditions.
This result has significant advantages over existing closed-loop results for linear data-driven MPC with noisy data~\cite{berberich2021guarantees,bongard2021robust}.
In particular, due to the online optimization of the artificial equilibrium $(u^{\rms}(t),y^{\rms}(t))$, i) it is not required to know a priori whether a given input-output setpoint is a feasible equilibrium, ii) the resulting scheme is recursively feasible in case of online setpoint changes, and iii) it possesses a significantly improved robustness and larger region of attraction.
Finally, Proposition~\ref{prop:continuity} can also be used to simplify the existing stability proofs of data-driven MPC in~\cite{berberich2021guarantees,bongard2021robust}.
In Appendix~\ref{sec:app_affine_MPC}, we provide a detailed discussion and analysis of data-driven tracking MPC for affine systems based on Proposition~\ref{prop:continuity}.
\end{remark}

\subsection{Closed-loop guarantees}\label{subsec:NL_guarantees}
Before presenting our main theoretical result, we make an additional assumption on closed-loop persistence of excitation.
\begin{assumption}\label{ass:closed_loop_pe}
(Closed-loop persistence of excitation)
There exists $c_{\rmH}>0$ such that, for all $t\geq N$, the matrix $H_{ux,t}$ from~\eqref{eq:Hux_NL} has full row rank and $\lVert H_{ux,t}^\dagger\rVert_2\leq c_{\rmH}$.
\end{assumption}
Assumption~\ref{ass:closed_loop_pe} ensures that the persistence of excitation condition in 
%Assumption~\ref{ass:affine_pe}, which is required for the results in Section~\ref{sec:affine_MPC}, 
Definition~\ref{def:pe} holds uniformly in closed loop.
To be precise, in addition to $H_{ux,t}$ having full row rank, we also require a uniform upper bound on $\lVert H_{ux,t}^\dagger\rVert_2$ which holds if the singular values of $H_{ux,t}$ are uniformly lower and upper bounded.
Assumption~\ref{ass:closed_loop_pe} is crucial for our theoretical results since, if the data used for prediction are updated online, they may in general not be persistently exciting and can thus lead to inaccurate predictions.
We note that Assumption~\ref{ass:closed_loop_pe} can be restrictive and may not be satisfied in practice, in particular upon convergence of the closed loop.
Nevertheless, there are various pragmatic approaches to ensure the availability of persistently exciting data for our purposes such as (i) stopping the data updates as soon as a neighborhood of the setpoint is reached, (ii) adding suitable excitation signals to the closed-loop input applied to the system, or (iii) incentivizing persistently exciting inputs similar to adaptive MPC approaches~\cite{lu2020robust}.
Ensuring closed-loop persistence of excitation within our data-driven MPC framework for nonlinear systems is beyond the scope of this paper and we plan to investigate this issue in future research.

In the following, we present our main theoretical result on closed-loop stability.
Our analysis relies on the Lyapunov function candidate $V(\xi_t',\mathcal{D}_t)\coloneqq \check{J}_L^*(\xi_t',\mathcal{D}_t)-J_{\mathrm{eq},\rmlin}^*(x_t)$.

%For our analysis, we define an extended state $\bar{\xi}_t$ corresponding to $x_t$ in the dynamics linearized at $x_t$, i.e.,
%%
%\begin{align}\label{eq:NL_trafo_xi_x_lin}
%x_t&=T_{\rmx}(x_t)\bar{\xi}_t+T_{er}(x_t),
%\end{align}
%%
%where $T_{\rmx}(x_t)$ and $T_{er}(x_t)$ correspond to $T_{\rmx}$ and $T_ee+T_rr$ in~\eqref{eq:trafo_T_extended_state2} for the affine dynamics of the linearization at $x_t$, compare the proof of Theorem~\ref{thm:NL_stability} for the precise definition.
%In the following result, we use $V(\bar{\xi}_t,\mathcal{D}_t)\coloneqq \check{J}_L^*(\bar{\xi}_t,\mathcal{D}_t)-J_{\mathrm{eq},\rmlin}^*(x_t)$, where $\check{J}_L^*(\bar{\xi}_t,\mathcal{D}_t)$ is the optimal cost of the nominal data-driven MPC problem~\eqref{eq:DD_MPC_affine_nominal} with $\tilde{\sigma}=0$ and with initial condition $\bar{\xi}_t$, where the data set $\mathcal{D}_t\coloneqq\{u_{t+k},y_k'(t)\}_{k=-N}^{-1}$ replaces $\{u_k^\rmd,y_k^\rmd\}_{k=0}^{N-1}$ in~\eqref{eq:DD_MPC_affine_nominal_hankel}.
%That is, $\mathcal{D}_t$ contains a (nominal) trajectory of the linearization at $x_t$, cf. Section~\ref{subsec:NL_bound}, and thus, $V(\bar{\xi}_t,\mathcal{D}_t)$ corresponds to a Lyapunov function for the affine system linearized at $x_t$.

\begin{theorem}\label{thm:NL_stability}
Suppose $L\geq2n$ and Assumptions~\ref{ass:NL_sys}--\ref{ass:LICQ} hold.
%Let Assumptions~\ref{ass:NL_sys},~\ref{ass:NL_obsv}, and~\ref{ass:NL_manifold_convex} hold.
There exists $\bar{\theta}>0$ such that for any $\theta\in(0,\bar{\theta}]$, if Assumption~\ref{ass:closed_loop_pe} holds with $c_{\rmH}=\frac{\bar{c}_{\rmH}}{\theta}$ for some $\bar{c}_{\rmH}>0$, then there exist $V_{\mathrm{ROA}},\bar{S},C>0$, regularization parameters $\lambda_{\alpha},\lambda_{\sigma}>0$, and $0<c_{\rmV}<1$, $\beta_{\theta}\in\mathcal{K}_{\infty}$, such that, if
\begin{align}
V(\xi_N',\mathcal{D}_N)&\leq V_{\mathrm{ROA}},\>\lambda_{\max}(S)\leq\bar{S}\\\label{eq:thm_NL_stability_xi_N}
\lVert \xi_N-\xi_i\rVert_2&\leq\theta,\>i\in\mathbb{I}_{[0,N-1]},
\end{align}
then, for any $t=N+ni$, $i\in\mathbb{I}_{\geq0}$, Problem~\eqref{eq:DD_MPC_NL} is feasible and the closed loop under Algorithm~\ref{alg:MPC_n_step_NL} satisfies
\begin{align}\label{eq:thm_NL_stability_decay}
\lVert \xi_{t}-\xi^{\rms\rmr}\rVert_2^2\leq c_{\rmV}^iC\lVert\xi_N-\xi^{\rms\rmr}\rVert_2^2+\beta_{\theta}(\theta).
\end{align}
\end{theorem}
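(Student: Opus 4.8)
The plan is to establish a Lyapunov-type decrease for $V(\xi_t',\mathcal{D}_t)=\check{J}_L^*(\xi_t',\mathcal{D}_t)-J_{\mathrm{eq},\rmlin}^*(x_t)$ along the closed loop, following the structure of the companion paper~\cite{berberich2021linearpart1} but accounting for the fact that the prediction model comes from \emph{nonlinear} data rather than an exact linearization. First I would fix the linearization-induced error $\bar{\varepsilon}$: using Lemma~\ref{lem:pred_error_NL}, the discrepancies $\lVert\xi_t-\xi_t'\rVert_2$ and $\lVert y_{t+k}-y_k'(t)\rVert_2$ appearing in~\eqref{eq:prop_continuity_noise_bound} are bounded by a constant times $\sum_j\lVert x_t-x_j\rVert_2^2$, i.e.\ by the squared ``local spread'' of the state trajectory over the last $N+n$ steps. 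The key self-consistency argument is that, as long as the closed-loop state stays in a neighborhood of $\calZ_\rmx^\rms$ (which is guaranteed by the sublevel set $V(\xi_N',\mathcal{D}_N)\le V_{\mathrm{ROA}}$ and small $\lambda_{\max}(S)$), this spread is itself controlled by $\theta$ and by $V(\xi_t',\mathcal{D}_t)$, so $\bar{\varepsilon}$ is small and the hypotheses of Proposition~\ref{prop:continuity} are met with $c_{\rmH}=\bar{c}_{\rmH}/\theta$.

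Next I would invoke the model-based result of~\cite{berberich2021linearpart1} (as ported to the data-driven affine setting — essentially the analysis in Appendix~\ref{sec:app_affine_MPC}) to obtain a descent inequality for the \emph{nominal} value function: there are constants so that $\check{J}_L^*$ along the idealized linearized prediction decreases by a fraction of $\lVert\xi_t-\xi^{\rms\rmr}\rVert_2^2$ (minus $J_{\mathrm{eq},\rmlin}^*(x_t)$), using the multi-step controllability argument, the terminal equality constraint~\eqref{eq:DD_MPC_NL_TEC}, and Assumptions~\ref{ass:NL_manifold_convex}. Then I would transfer this to the \emph{actual} closed loop via Proposition~\ref{prop:continuity}: part (i) bounds $J_L^*(\xi_t)$ above by $(1+c_{\rmJ,\rma}\bar{\varepsilon}^{\beta_\sigma})\check{J}_L^*(\xi_t',\mathcal{D}_t)+c_{\rmJ,\rmb}\bar{\varepsilon}^{2-\beta_\sigma}(1+c_{\rmH}^2\bar{\varepsilon}^{\beta_\alpha})$, and part (ii) controls the applied input so that the realized successor state $\xi_{t+n}$ is within $\beta_{\rmu}(\bar{\varepsilon})$ of the nominal successor. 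Combining these, using the elementary inequalities~\eqref{eq:ab_ineq2}--\eqref{eq:ab_ineq} to absorb cross terms, and observing that the exponents satisfy $\beta_\alpha+2\beta_\sigma<2$ so that the error contributions are higher order in $\bar{\varepsilon}$, yields a one-step (i.e.\ $n$-step) inequality of the form $V(\xi_{t+n}',\mathcal{D}_{t+n})\le (1-\rho)V(\xi_t',\mathcal{D}_t)+C'\bar{\varepsilon}$ for some $\rho\in(0,1)$, where $\bar{\varepsilon}$ itself is $O(\theta^2)+O(V(\xi_t',\mathcal{D}_t))$-type small.

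From this practical-descent inequality I would close the loop in the standard way: recursive feasibility follows because the decrease keeps $V$ in the sublevel set $V_{\mathrm{ROA}}$ (choosing $\bar\theta$, $\bar S$, and $\lambda_\alpha,\lambda_\sigma$ small enough that the additive term is dominated), and iterating the inequality gives $V(\xi_t',\mathcal{D}_t)\le c_{\rmV}^i V(\xi_N',\mathcal{D}_N)+\text{const}\cdot\beta_\theta(\theta)$ for $t=N+ni$. Finally I would convert back from $V$ to $\lVert\xi_t-\xi^{\rms\rmr}\rVert_2^2$ using that $V$ is sandwiched between multiples of $\lVert\xi_t'-\xi^{\rms\rmr}_{\rmlin}(x_t)\rVert_2^2$ (lower bound from positive-definiteness of the stage cost and the terminal constraint, upper bound from a feasible candidate built by steering to $\xi^{\rms\rmr}_{\rmlin}$), then Assumption~\ref{ass:NL_manifold_convex} relates $\lVert\xi_t'-\xi^{\rms\rmr}_{\rmlin}(x_t)\rVert_2$ to $\lVert\xi_t-\xi^{\rms\rmr}\rVert_2$ up to the $O(\bar\varepsilon)$ difference between $\xi_t$ and $\xi_t'$, which again folds into $\beta_\theta(\theta)$.

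The main obstacle I anticipate is the self-referential nature of the error bound: $\bar{\varepsilon}$ depends (via Lemma~\ref{lem:pred_error_NL}) on how far the closed-loop state has moved, which depends on the control input, which depends on the optimal solution of~\eqref{eq:DD_MPC_NL}, whose accuracy is governed by $\bar{\varepsilon}$. Breaking this circularity requires a careful induction: assuming $\xi_t$ is in a small neighborhood of the manifold and $V(\xi_t',\mathcal{D}_t)$ is small, one must show the predicted (and hence applied) trajectory stays close to the manifold over the next $n$ steps, so that $\bar{\varepsilon}$ at time $t+n$ remains controlled — this is where smallness of $\lambda_{\max}(S)$ and the multi-step local controllability estimate are essential, and where the bookkeeping of which constants depend on $\theta$ (notably $c_{\rmH}=\bar c_{\rmH}/\theta$, which makes the $c_{\rmH}^2\bar\varepsilon^{\beta_\alpha}$ term behave like $\bar\varepsilon^{\beta_\alpha}/\theta^2 = O(\theta^{2\beta_\alpha - 2})$, forcing $\beta_\alpha\ge 1$ or a more delicate balance) must be done with care.
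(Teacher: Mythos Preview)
Your plan is essentially the paper's proof: the same Lyapunov candidate $V(\xi_t',\mathcal{D}_t)$, Lemma~\ref{lem:pred_error_NL} to bound $\bar\varepsilon$ by the squared local spread, Proposition~\ref{prop:continuity} to convert data error into an input disturbance, the companion paper's nominal decrease, and Assumption~\ref{ass:NL_manifold_convex} for the final conversion. The one point where the paper is more concrete than your sketch is the induction that breaks the circularity: rather than inducting on ``proximity to the manifold'', the paper inducts directly on the increment bound $\lVert\xi_{k+n}-\xi_{k+j}\rVert_2\le c_{\xi,0}\theta$ and closes it by choosing $\bar S=\theta^4$, $V_{\mathrm{ROA}}=\theta^{2+\eta}$, $\lambda_\alpha=\bar\lambda_\alpha\theta^{2\beta_\alpha}$, $\lambda_\sigma=\bar\lambda_\sigma\theta^{-2\beta_\sigma}$ with $\beta_\alpha>1$ and $0<\eta<2(\beta_\alpha-1)$, so that every additive error term in the increment estimate is of order strictly higher than $\theta$ --- your observation that $c_{\rmH}^2\bar\varepsilon^{\beta_\alpha}\sim\theta^{2\beta_\alpha-2}$ forces $\beta_\alpha>1$ is exactly right and is the reason for these exponents.
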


The proof of Theorem~\ref{thm:NL_stability} is provided in Appendix~\ref{sec:app_thm_stab_proof}.
The result shows that the closed-loop state trajectory converges close to the optimal reachable equilibrium (cf.~\eqref{eq:thm_NL_stability_decay}) if the cost matrix $S$ is sufficiently small, the regularization parameters are chosen suitably (roughly speaking, $\lambda_{\alpha}$ is small and $\lambda_{\sigma}$ is large), and if the initial data satisfy $\lVert\xi_N-\xi_i\rVert_2\leq\theta$ for $i\in\mathbb{I}_{[0,N-1]}$ with a sufficiently small $\theta$.
Inequality~\eqref{eq:thm_NL_stability_decay} implies that the optimal reachable setpoint $\xi^{\rms\rmr}$ is practically exponentially stable, i.e., the closed-loop trajectory exponentially converges to a region around $\xi^{\rms\rmr}$ whose size increases with $\theta$.
In particular, under suitable assumptions, the proposed MPC scheme steers the closed loop arbitrarily close to $\xi^{\rms\rmr}$ if the initial state evolution, i.e., the parameter $\theta$, is sufficiently small.   
The proof relies on a separation argument, combining the continuity of data-driven MPC in Proposition~\ref{prop:continuity} with the model-based MPC analysis from~\cite{berberich2021linearpart1}.

In the following, we provide an interpretation for this stability result.
As shown in Section~\ref{subsec:prelim_bound}, the prediction accuracy of the implicit prediction model in~\eqref{eq:DD_MPC_NL} depends on the distance of past state values to the current state.
The parameter $\theta$ provides a bound on this distance at initial time $t=N$, which is shown to hold recursively for all $t\geq N$ in the proof.
Thus, if $\theta$ is sufficiently small at $t=N$, then the closed loop does not move too rapidly such that the prediction model remains accurate and stability can be shown.
The conditions $\lambda_{\max}(S)\leq\bar{S}$ and $V(\xi_0'(N),\mathcal{D}_N)\leq V_{\mathrm{ROA}}$ for sufficiently small $\bar{S}$ and $V_{\mathrm{ROA}}$ are similar to the results for model-based MPC in~\cite{berberich2021linearpart1}, and they guarantee that the closed-loop trajectory remains close to the steady-state manifold and that the bound $\lVert \xi_t-\xi_i\rVert_2\leq c_{\theta,1}\theta$, $i\in\mathbb{I}_{[t-N,t-1]}$, holds recursively.
Hence, in comparison to~\cite{berberich2021linearpart1}, $\bar{S}$ as well as $V_{\mathrm{ROA}}$ need to be potentially smaller such that the bound $\lVert \xi_t-\xi_i\rVert_2\leq c_{\theta,1}\theta$, $i\in\mathbb{I}_{[t-N,t-1]}$, holds and, therefore, closed-loop stability can be guaranteed.
Further, the proof of Theorem~\ref{thm:NL_stability} shows that the choice of the regularization parameters $\lambda_{\alpha}$ and $\lambda_{\sigma}$ leading to closed-loop practical stability depends on $\theta$.
In particular, we have $\lambda_{\alpha}=\bar{\lambda}_{\alpha}\theta^{2\beta_{\alpha}}$ and $\lambda_{\sigma}=\frac{\bar{\lambda}_{\sigma}}{\theta^{2\beta_{\sigma}}}$ for some $\bar{\lambda}_{\alpha},\bar{\lambda}_{\sigma},\beta_{\alpha},\beta_{\sigma}>0$, i.e., the regularization of $\alpha(t)$ / $\sigma(t)$ increases / decreases with the parameter $\theta$ quantifying the prediction error.

The theoretical stability result (Theorem~\ref{thm:NL_stability}) uses Assumptions~\ref{ass:NL_sys}--\ref{ass:closed_loop_pe}.
If the nonlinear dynamics~\eqref{eq:sys_NL} are known, then they can be verified analogously to~\cite{berberich2021linearpart1}.
If the dynamics are unknown but only input-output data are available, then performing this verification is an interesting issue for future research with preliminary results for linear systems in~\cite{berberich2020constraints}.

Note that Assumption~\ref{ass:closed_loop_pe} requires a minimum amount of persistence of excitation, i.e., a uniform bound $\lVert H_{ux,t}^\dagger\rVert_2\leq c_{\rmH}$.
On the other hand, Theorem~\ref{thm:NL_stability} also requires an (initial) upper bound on $\lVert\xi_t-\xi_{t-N+i}\rVert\leq\theta$, $i\in\mathbb{I}_{[0,N-1]}$, which in turn limits the amount of persistence of excitation.
By allowing $c_{\rmH}$ to depend on $\theta$ as $c_{\rmH}=\frac{\bar{c}_{\rmH}}{\theta}$, our analysis takes into account the fact that $\theta$ cannot be arbitrarily small for a fixed $c_{\rmH}$, i.e., for a fixed lower bound on the amount of persistence of excitation.
Assumption~\ref{ass:closed_loop_pe} is the main reason why Theorem~\ref{thm:NL_stability} only provides a \emph{practical} stability result, i.e., asymptotic stability can in general not be proven if we assume uniform closed-loop persistence of excitation.
This is in contrast to the results in our companion paper~\cite{berberich2021linearpart1} which showed closed-loop \emph{exponential} stability assuming that an exact model of the linearization is available, i.e., without any requirements on persistence of excitation for the closed loop.

For the numerical example in Section~\ref{sec:example}, we observe that the closed loop indeed converges very closely to the optimal reachable equilibrium $\xi^{\rms\rmr}$ and, moreover, closed-loop persistence of excitation can be ensured by stopping the data updates.
Finally, the condition $\lVert \xi_t-\xi_i\rVert_2\leq\theta$, $i\in\mathbb{I}_{[t-N,t-1]}$, is generally easier to satisfy for smaller values of the data length $N$ and the prediction model in~\eqref{eq:DD_MPC_NL_hankel} is less accurate for too large values of $N$.
This is different to linear data-driven MPC, where larger values of $N$ usually improve the closed-loop performance in case of noisy data.
For nonlinear systems, on the other hand, ``older'' data points correspond to an ``older'' approximate linear model of the system which typically yields a worse approximation at the current state.

\begin{remark}\label{rk:noise}
While we assume that the data $\{u_k,y_k\}_{k=t-N}^{t-1}$ used in Problem~\eqref{eq:DD_MPC_NL} are noise-free, our results can be extended to output measurement noise.
To be more precise, suppose output measurements $\tilde{y}_k\coloneqq y_k+\epsilon_k$ with $\lVert\epsilon_k\rVert_{2}\leq\bar{\epsilon}$ are available for some sufficiently small $\bar{\epsilon}>0$.
Then, under the conditions in Theorem~\ref{thm:NL_stability}, it can be shown analogously to the proof of Theorem~\ref{thm:NL_stability} that there exist $\beta_{\epsilon}\in\mathcal{K}_{\infty}$ as well as (possibly different) $0<c_{\rmV}<1$, $C>0$, $\beta_{\theta}\in\mathcal{K}_{\infty}$, such that
\begin{align}\label{eq:rk_noise_ineq}
\lVert\xi_t-\xi^{\rms\rmr}\rVert_2^2\leq c_{\rmV}^iC\lVert\xi_N-\xi^{\rms\rmr}\rVert_2^2
+\beta_{\theta}(\theta)+\beta_{\epsilon}(\bar{\epsilon})
\end{align}
for any $t=N+ni$, $i\in\mathbb{I}_{\geq0}$.
Inequality~\eqref{eq:rk_noise_ineq} means that the closed loop converges to a region around the optimal reachable steady-state whose size increases with the parameter $\theta$ and the noise bound $\bar{\epsilon}$.
This is possible since the perturbation of Problem~\eqref{eq:DD_MPC_NL} caused by the linearization error is in fact handled as output measurement noise affecting the input-output data, compare Lemma~\ref{lem:pred_error_NL}.
Thus, small levels of noise will not affect the qualitative theoretical guarantees, but may only deteriorate the quantitative properties, e.g., increase the tracking error and decrease the region of attraction.
\end{remark}

%To summarize, Theorem~\ref{thm:NL_stability} provides closed-loop stability guarantees for the MPC scheme in Algorithm~\ref{alg:MPC_n_step_NL}, which uses no model knowledge but only measured input-output data and which relies on solving strictly convex QPs online.
%The guarantees hold for suitable parameters and for initial conditions in a neighborhood of the steady-state manifold.
%We have thus demonstrated that the data-driven parametrization of affine systems (Theorem~\ref{thm:willems_affine}) with online data updates allows us to control unknown nonlinear systems with closed-loop guarantees under reasonable assumptions on smooth system dynamics, controllability, observability, as well as compactness of the steady-state manifold.

\section{Numerical example}\label{sec:example}
We apply the presented data-driven MPC approach (see Algorithm~\ref{alg:MPC_n_step_NL}) to the nonlinear continuous stirred tank reactor from~\cite{mayne2011tube}.
We consider the same discretized system as in our companion paper~\cite{berberich2021linearpart1} (see~\cite{berberich2021linearpart1} for a detailed description of the system dynamics, parameters, and satisfaction of Assumption~\ref{ass:NL_sys} as well as the counterpart of Assumption~\ref{ass:NL_manifold_convex}).
In contrast to~\cite{berberich2021linearpart1}, the system model is unknown and we only have access to input-output data.
To implement the MPC, we choose the parameters
\begin{align*}
Q=1, R=5\cdot 10^{-2}, S=10, \lambda_{\alpha}=3\cdot 10^{-6},\lambda_{\sigma}=10^7,
\end{align*}
and the prediction horizon $L=40$.
Further, the constraints are $\mathbb{U}=[0.1,2]$, $\mathbb{U}^{\rms}=[0.11,1.99]$.
In comparison to~\cite{berberich2021linearpart1}, where we chose $S=100$, we use the smaller choice $S=10$ for the data-driven MPC scheme considered in this paper to ensure
that the closed loop does not change too rapidly, i.e., the bound $\lVert\xi_t-\xi_i\rVert_2\leq c_{\xi,0}\theta$, $i\in\mathbb{I}_{[t-N,t-1]}$, holds in closed loop for a sufficiently small $\theta$.
This is required since, in contrast to~\cite{berberich2021linearpart1}, Problem~\eqref{eq:DD_MPC_NL} only contains an \emph{approximation} of the linearized dynamics whose accuracy deteriorates for larger values of $\theta$, compare Lemma~\ref{lem:pred_error_NL}.

In order to ensure that the dynamics are control-affine as in~\eqref{eq:sys_NL}, we introduce an incremental input $\Delta u_k=u_{k+1}-u_k$ playing the role of the input $u_k$ in the previous sections, similar to our companion paper~\cite{berberich2021linearpart1}.
To guarantee satisfaction of the input constraints $u_t\in\mathbb{U}$, we consider an extended output vector $\hat{y}_t=\begin{bmatrix}y_t\\u_t\end{bmatrix}$ and add output constraints of the form $\hat{y}_k(t)\in\mathbb{R}\times\mathbb{U}$, $k\in\mathbb{I}_{[0,L]}$, as well as $\hat{y}^\rms(t)\in\mathbb{R}\times\mathbb{U}^\rms$ in Problem~\eqref{eq:DD_MPC_NL}.
With these modifications, the theoretical guarantees of our MPC scheme remain true and the output constraints are satisfied in closed loop since the available data provide an exact ``prediction model'' of $\Delta u\mapsto u$.

Further, we replace $\alpha^{\rms\rmr}_{\rmlin}(x_t)$ in Problem~\eqref{eq:DD_MPC_NL_cost} by the ``approximation'' $\alpha^*(t-n)$ to regularize w.r.t.\ the previously optimal solution and thus to encourage stationary behavior.
%Additionally, we (approximately) compute the optimal solution $\alpha^{\rms\rmr}_{\rmlin}(x_t)$ of Problem~\eqref{eq:opt_reach_equil_linearized} at each time $t$ based on the past $N$ input-output measurements, where we add a slack variable $\sigma^\rms$ in the constraints and regularizations $10^{-3}\lVert\alpha^\rms\rVert_2^2$ and $10^3\lVert\sigma^\rms\rVert_2^2$ in the cost, \JB{compare Remark~\ref{rk:alpha_sr}.}
The data length is chosen as $N=120$ and we generate initial data samples for $t\in\mathbb{I}_{[0,N-1]}$ by sampling the input uniformly from $u_t\in[0.1,1]$.
Finally, to ensure that the data used for prediction are persistently exciting, we stop updating the data in~\eqref{eq:DD_MPC_NL_hankel} 
as soon as the tracking cost is sufficiently small, i.e.,
\begin{align*}
\sum_{k=-n}^L&\lVert\Delta \bar{u}_k^*(t)-\overbrace{\Delta u^{\rms*}(t)}^{=0}\rVert_2^2
+\lVert\bar{u}_k^*(t)-u^{\rms*}(t)\rVert_R^2\\
&+\lVert\bar{y}_k^*(t)-y^{\rms*}(t)\rVert_{Q}^2\leq10^{-5}.
\end{align*}
%at time $t=2000$ when the closed loop is close to the setpoint, i.e., we use the input-output data $\{u_k,y_k\}_{k=2000-N}^{2000-1}$ for prediction at any $t>2000$.

We now apply the multi-step MPC scheme in Algorithm~\ref{alg:MPC_n_step_NL} with the above parameters and modifications. % in a \emph{one-step} fashion.
%While the theoretical analysis in Section~\ref{sec:NL_MPC} only applies for a multi-step MPC scheme, we conjecture that similar results can be derived locally close to the steady-state manifold for a one-step MPC scheme, provided that a stronger version of the controllability assumption~\cite[Assumption 3]{berberich2021linearpart1} holds, see~\cite{berberich2021linearpart1} for details.
First, we note that updating the data in Problem~\eqref{eq:DD_MPC_NL} online is a crucial ingredient of our approach.
In particular, a data-driven MPC using the data $\{u_k,y_k\}_{k=0}^{N-1}$ for prediction at all times leads to a huge tracking error, i.e., the output converges to $1.14\neq y^\rmr=0.6519$.
The closed-loop input-output trajectory under our MPC approach is depicted in Figure~\ref{fig:example}, along with the trajectory resulting from the multi-step MPC based on a linearized model in our companion paper~\cite{berberich2021linearpart1} (using the same parameters as above, except for $S=100$ instead of $S=10$).
In the data-driven MPC, the input is more unsteady which can be explained by the combination of the less accurate prediction model and terminal equality constraints.
Moreover, since the matrix $S$ is chosen smaller in comparison to the model-based MPC ($S=10$ instead of $S=100$), the convergence towards the setpoint $y^\rmr$ is slower.
Solving Problem~\eqref{eq:DD_MPC_NL} online takes on average $13$ milliseconds, i.e., slightly longer than the model-based MPC algorithm from~\cite{berberich2021linearpart1} which takes $7.4$ milliseconds.

Finally, for comparison, we apply a model-based MPC scheme based on online least-squares identification\footnote{For the identification, we implemented an incremental parameter update based on a regularization w.r.t.\ the previous parameter estimate.
In particular, a model-based MPC using a simple least-squares parameter estimation does not suffice to successfully steer the system to the setpoint.}
of an affine input-output model 
\begin{align*}
y_{k+2}=a_1y_{k+1}+a_0y_k+b_2u_{k+2}+b_1u_{k+1}+b_0u_k+c,
\end{align*}
using the last $N$ data points $\{u_k,y_k\}_{k=t-N}^{t-1}$ for identification at time $t$.
The MPC implementation is analogous to that in~\cite{berberich2021linearpart1} with an extended state-space model, compare, e.g.,~\cite[Lemma 2]{koch2021provably}, and the parameters $Q$, $R$, horizon length $L$, and data length $N$ are as above, including the incremental input penalty $\lVert \Delta u_k\rVert_2^2$.
The matrix $S$ is chosen as $S=30$, i.e., larger than for the data-driven MPC, since the slack variable in Problem~\eqref{eq:DD_MPC_NL} relaxes the terminal equality constraint and thereby speeds up the closed-loop convergence.
While this identification-based MPC scheme exhibits comparable performance to the proposed data-driven MPC scheme (cf. Figure~\ref{fig:example}), the existing literature does not contain any theoretical results on closed-loop stability under similar assumptions as in the present paper.

\begin{figure}
		\begin{center}
%		\subfigure
%		{\includegraphics[width=0.48\textwidth]{Figures/example_x1}}
		\subfigure
		{\includegraphics[width=0.48\textwidth]{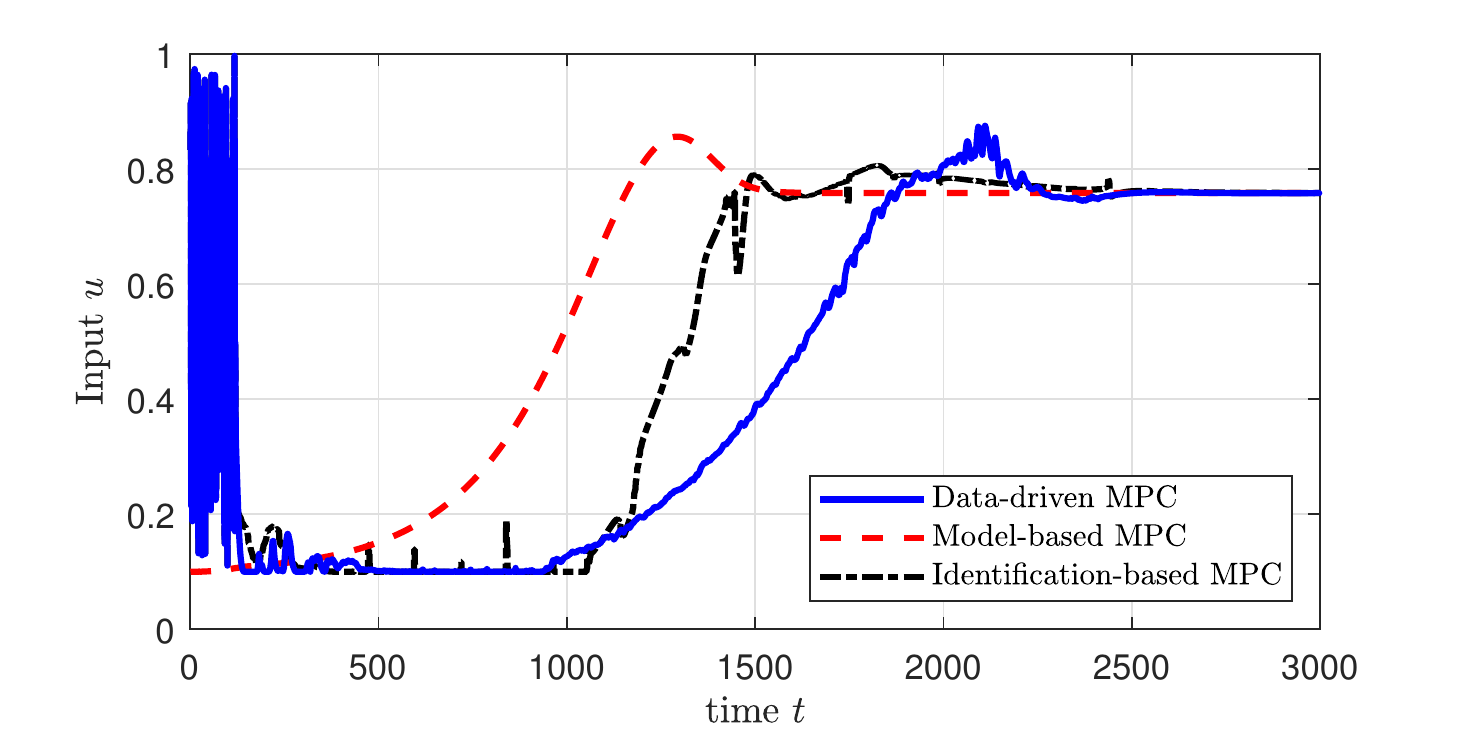}}
		\subfigure
		{\includegraphics[width=0.48\textwidth]{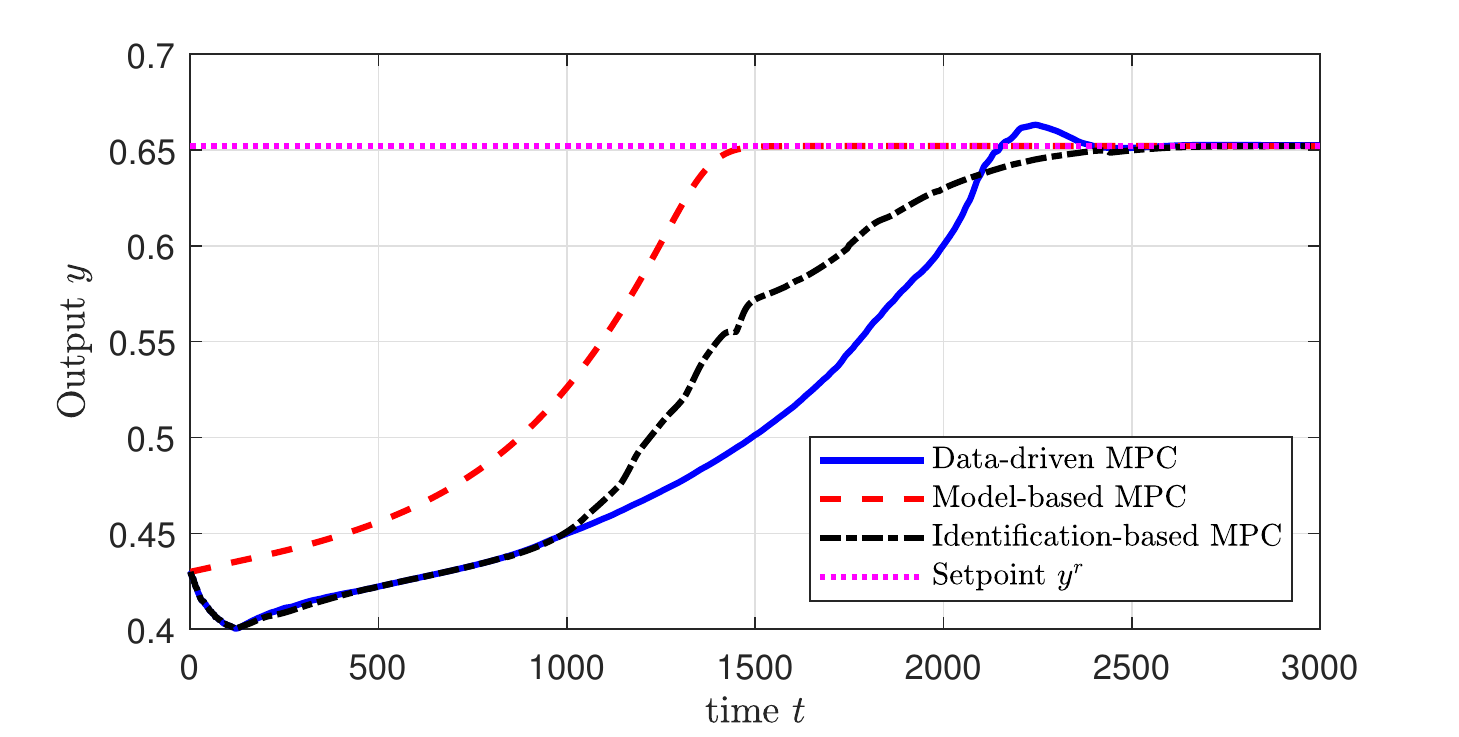}}
		\end{center}
		\caption{Closed-loop input $u$ and output $y$ of the example in Section~\ref{sec:example}.
		%The data used for prediction are only updated if $t\leq2000$, which is indicated by the dotted vertical line
		}	\label{fig:example}
\end{figure}

Regarding the condition~\eqref{eq:thm_NL_stability_xi_N} in Theorem~\ref{thm:NL_stability}, we note that the quantity $\bar{\theta}(t)\coloneqq\max_{i\in\mathbb{I}_{[t-N,t-1]}}\lVert\xi_t-\xi_i\rVert_2$ takes its maximum at $t=141$, i.e., directly after the initial excitation phase.
This yields a uniform bound on the distance between closed-loop state trajectories, i.e., $\lVert\xi_t-\xi_i\rVert_2\leq c_{\theta,1}\theta$ for all $t\in\mathbb{I}_{[N,3000]}$, $i\in\mathbb{I}_{[t-N,t-1]}$, where $\theta\coloneqq\max_t\bar{\theta}(t)$, compare~\eqref{eq:thm_NL_stability_proof_xi_bound} in the proof of Theorem~\ref{thm:NL_stability}.
Let us now investigate whether this uniform upper bound conflicts with the requirements on closed-loop persistence of excitation in Assumption~\ref{ass:closed_loop_pe}.
For the present example, we observe that the norm of $H_{ux,t}^\dagger$ increases in closed loop under the data-driven MPC when the trajectory approaches the setpoint.
However, the product $\lVert H_{ux,t}^\dagger\rVert_2\bar{\theta}(t)$ is uniformly bounded, even when $x_t$ is close to $x^{\rms\rmr}$, and hence, there exists a constant $\bar{c}_{\rmH}>0$ such that $\lVert H_{ux,t}^\dagger\rVert_2\leq\frac{\bar{c}_{\rmH}}{\bar{\theta}(t)}$ holds, compare the conditions in Theorem~\ref{thm:NL_stability}.
Thus, while $\lVert H_{ux,t}^\dagger\rVert_2$ increases for smaller bounds $\bar{\theta}(t)$, the order of increase is not larger than $\frac{1}{\bar{\theta}(t)}$ such that the persistence of excitation assumptions required for Theorem~\ref{thm:NL_stability} are fulfilled.

In Figure~\ref{fig:cost_plot}, we show the closed-loop cost $\sum_{t=0}^{3000}\lVert y_t-y^{\rmr}\rVert_2^2$ when varying the parameters $S$ and $\lambda_{\alpha}$, keeping all other parameters as above.
The displayed cost is normalized, i.e., it is multiplied by $\frac{1}{J_{\mathrm{mdl}}}$, where $J_{\mathrm{mdl}}$ denotes the closed-loop cost $\sum_{t=0}^{3000}\lVert y_t-y^{\rmr}\rVert_2^2$ of the model-based MPC from~\cite{berberich2021linearpart1}.
First, the closed-loop cost of the data-driven MPC is generally worse than that of the model-based MPC due to the initial excitation phase as well as the smaller choice of $S$ and the resulting slower convergence.
Further, recall that smaller values of $S$ decrease the bound $\bar{\theta}(t)$ above and thus improve the prediction accuracy.
If $S$ is chosen too large, the closed-loop trajectory moves too rapidly such that the predictions are inaccurate and the performance is more sensitive w.r.t.\ variations of $\lambda_{\alpha}$.
On the other hand, too small values of $S$ lead to a large cost since the convergence is slower and the term $\lambda_{\alpha}\lVert\alpha(t)-\alpha^{\rms\rmr}_{\rmlin}(\calD_t)\rVert_2^2$ dominates in the optimization of~\eqref{eq:DD_MPC_NL}, thus increasing the stationary tracking error (recall that we consider $\alpha^{\rms\rmr}_{\rmlin}(\calD_t)\approx\alpha^*(t-n)$).
However, there exists a corridor $S\in[7,30]$ for which the closed-loop performance is acceptable over a wide range of $\lambda_{\alpha}$.

\begin{figure}
\begin{center}
\includegraphics[width=0.48\textwidth]{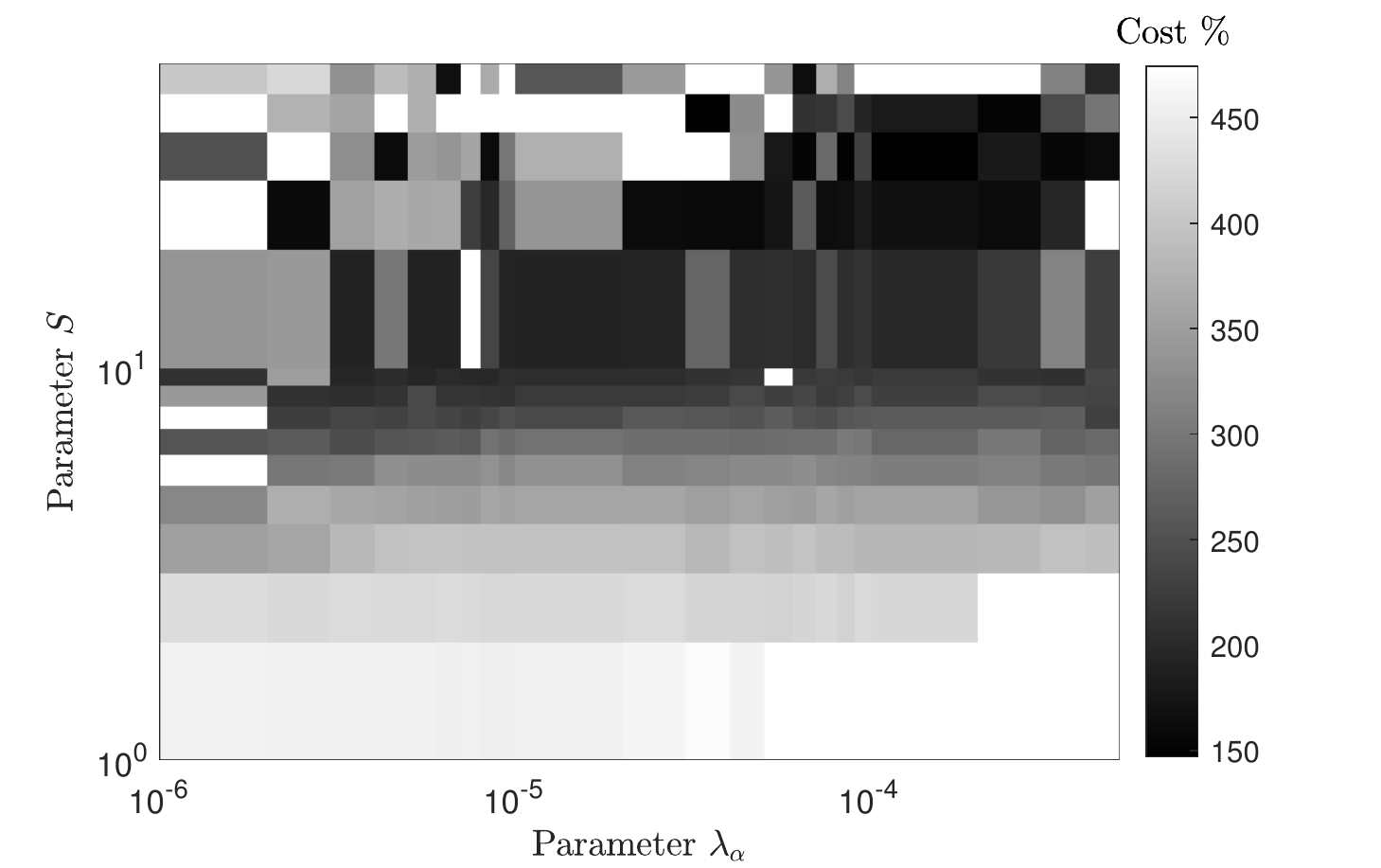}
\end{center}
\caption{Normalized closed-loop cost depending on the parameters $S$ and $\lambda_{\alpha}$ for the numerical example in Section~\ref{sec:example}.}
\label{fig:cost_plot}
\end{figure}

The other parameters in Problem~\eqref{eq:DD_MPC_NL} influence the closed-loop performance as well and performing a more detailed case study for different applications is a very interesting issue for future research.
To this end,~\cite{berberich2021at} analyzes the influence of different parameter choices on the closed-loop operation when applying the proposed MPC scheme to a nonlinear four-tank system in simulation and in a real-world experiment.

Finally, we note that the proof of Theorem~\ref{thm:NL_stability} relies on multiple conservative estimates and should therefore be interpreted as a qualitative stability result, compare the discussion below Theorem~\ref{thm:NL_stability}.
Correspondingly, the magnitude of the (initial) excitation bound $\theta$ and the choices of the tuning variables $S$, $\lambda_{\alpha}$, $\lambda_{\sigma}$ considered in the example are not necessarily chosen small (for $\theta$, $S$, $\lambda_{\alpha}$) or large (for $\lambda_{\sigma}$) enough to satisfy the corresponding bounds in the proof.
%To summarize, the proposed data-driven MPC scheme can successfully control an unknown nonlinear system based only on input-output data.
%It should be noted that the parameters in Problem~\eqref{eq:DD_MPC_NL} have a crucial influence on the closed-loop performance and performing a detailed case study of our MPC approach is a very interesting issue, which goes however beyond the scope of this paper. 

\section{Conclusion}\label{sec:conclusion}
We presented a novel data-driven MPC approach to control unknown nonlinear systems based on input-output data with closed-loop stability guarantees.
Our approach relied on the data-driven system parametrization of affine systems stated in Section~\ref{sec:willems_affine}, which can (approximately) describe nonlinear systems by updating the data online.
We proved that the proposed MPC scheme practically stabilizes the closed loop under the assumptions that the design parameters are chosen suitably, the initial condition is close to the steady-state manifold, and the data generated during initialization are not too far apart.
As intermediate results of independent interest, we extended the Fundamental Lemma of~\cite{willems2005note} to affine systems and we derived novel robustness bounds of data-driven MPC which are directly applicable to other data-driven MPC formulations in the literature.
Further, as we show in Appendix~\ref{sec:app_affine_MPC}, they allow for designing a robust data-driven tracking MPC scheme for affine systems which admits stronger theoretical guarantees than existing data-driven MPC approaches, cf. Remark~\ref{rk:affine_motivation}.

%presented a robust data-driven tracking MPC scheme \JB{for affine systems} with superior \JB{theoretical guarantees} 
%%closed-loop robustness properties 
%compared to existing data-driven MPC approaches.
%The theoretical analysis of the latter MPC scheme relied on a novel separation argument of nominal and robust data-driven MPC, which can be easily applied to other data-driven MPC formulations in the literature.

We did not address the issue of how to ensure closed-loop persistence of excitation (Assumption~\ref{ass:closed_loop_pe}) and, in particular, we only showed \emph{practical} stability of the closed loop.
Tackling these issues by developing appropriate modifications of our MPC algorithm is a highly interesting and relevant future research direction, potentially leading to both superior theoretical guarantees and more reliable practical performance.

\bibliographystyle{IEEEtran}   
\bibliography{Literature}  

\section*{Appendix}

\renewcommand\thesection{\Alph{section}}
\setcounter{section}{0}

\section{Proof of Proposition~\ref{prop:continuity}}\label{sec:app_prop_continuity_proof}
\begin{proof}
\textbf{(i) Proof of~\eqref{eq:prop_continuity_cost}}\\
We construct a candidate solution for Problem~\eqref{eq:DD_MPC_NL} based on the optimal solution of Problem~\eqref{eq:DD_MPC_NL_nominal} for $\tilde{\sigma}=0$.
We choose 
\begin{align}\label{eq:app_prop_proof2}
\bar{u}(t)=\begin{bmatrix}u_{[t-n,t-1]}\\\check{u}_{[0,L]}^*(t)\end{bmatrix},\>\>
\bar{y}(t)=\begin{bmatrix}y_{[t-n,t-1]}\\\check{y}_{[0,L]}^*(t)\end{bmatrix},
\end{align}
and $u^\rms(t)=\check{u}^{\rms*}(t)$, $y^\rms(t)=\check{y}^{\rms*}(t)$.
Further, we define $\alpha(t)=H_{ux,t}^\dagger\begin{bmatrix}\bar{u}(t)\\x_{t-n}\\1\end{bmatrix}$ with $H_{ux,t}$ as in~\eqref{eq:Hux_NL}.
Moreover, we let 
\begin{align}\label{eq:app_prop_proof3}
\sigma(t)=H_{L+n+1}(y_{[t-N,t-1]})\alpha(t)-\bar{y}(t).
\end{align}
This candidate fulfills all constraints of Problem~\eqref{eq:DD_MPC_NL}.
Note that $\check{\alpha}^*(t)-\alpha^{\rms\rmr}_{\rmlin}(\calD_t)$ satisfies
\begin{align}\label{eq:app_Hankel_equation}
&\begin{bmatrix}H_{L+n+1}(u_{[t-N,t-1]})\\H_{L+n+1}(y'(t))\\\mathbbm{1}_{N-L-n}^\top\end{bmatrix}(\check{\alpha}^*(t)-\alpha^{\rms\rmr}_{\rmlin}(\calD_t))\\\nonumber
=&\begin{bmatrix}\check{u}^*(t)-\bbone_{L+n+1}\otimes u^{\rms\rmr}_{\rmlin}(x_t)\\\check{y}^*(t)-\bbone_{L+n+1}\otimes y^{\rms\rmr}_{\rmlin}(x_t)\\0\end{bmatrix}.
\end{align}
Due to the minimization in~\eqref{eq:DD_MPC_NL_nominal_cost}, $\check{\alpha}^*(t)-\alpha^{\rms\rmr}_{\rmlin}(\calD_t)$ is the vector satisfying~\eqref{eq:app_Hankel_equation} with minimum norm, which implies $\check{\alpha}^*(t)=H_{ux,t}^\dagger\begin{bmatrix}\check{u}^*(t)\\x_{t-n}\\1\end{bmatrix}$.
Thus, it holds that
\begin{align}\label{eq:app_alpha_diff_aux}
\lVert\alpha(t)-\check{\alpha}^*(t)\rVert_2^2\stackrel{\eqref{eq:app_prop_proof2}}{\leq}&\lVert H_{ux,t}^\dagger\rVert_2^2\lVert u_{[t-n,t-1]}-u_{[t-n,t-1]}'\rVert_2^2\\\nonumber
\stackrel{\eqref{eq:prop_continuity_noise_bound}}{\leq}&\lVert H_{ux,t}^\dagger\rVert_2^2\bar{\varepsilon}^2.
\end{align}
Using $\lVert a+b\rVert_2^2\leq(1+\bar{\varepsilon}^{\beta_{\sigma}})\lVert a\rVert_2^2+(1+\frac{1}{\bar{\varepsilon}^{\beta_{\sigma}}})\lVert b\rVert_2^2$ for arbitrary $a$, $b$, this implies
\begin{align}\label{eq:app_alpha_diff}
&\lVert\alpha(t)-\alpha^{\rms\rmr}_{\rmlin}(\calD_t)\rVert_2^2-\lVert\check{\alpha}^*(t)-\alpha^{\rms\rmr}_{\rmlin}(\calD_t)\rVert_2^2\\\nonumber
\leq&\bar{\varepsilon}^{\beta_{\sigma}}\lVert\check{\alpha}^*(t)-\alpha^{\rms\rmr}_{\rmlin}(\calD_t)\rVert_2^2+\lVert H_{ux,t}^\dagger\rVert_2^2(\bar{\varepsilon}^2+\bar{\varepsilon}^{2-\beta_{\sigma}}).
\end{align}
Note that the output trajectories $H_{L+n+1}(y'(t))\alpha(t)$ and $\check{y}^*(t)$ differ only due to the difference in the first $n$ components of the corresponding input trajectory, which is in turn bounded by $\bar{\varepsilon}$, cf.~\eqref{eq:prop_continuity_noise_bound} and~\eqref{eq:app_prop_proof2}.
Thus, there exists $c_1^{\rmu}>0$ such that
\begin{align}\label{eq:app_prop_proof1}
\lVert H_{L+n+1}(y'(t))\alpha(t)-\check{y}^*(t)\rVert_2^2\stackrel{\eqref{eq:prop_continuity_noise_bound}}{\leq} c_1^{\rmu}\bar{\varepsilon}^2.
\end{align}
Hence, there exist $c_2^{\rmu},c_3^{\rmu}>0$ such that
\begin{align}\label{eq:app_sigma_diff}
&\quad\lVert\sigma(t)\rVert_2^2\\\nonumber
&\stackrel{\eqref{eq:ab_ineq2},\eqref{eq:app_prop_proof3}}{\leq} 2\lVert \left(H_{L+n+1}(y_{[t-N,t-1]})-H_{L+n+1}(y'(t))\right)\alpha(t)\rVert_2^2\\\nonumber
&\quad+4\lVert H_{L+n+1}(y'(t))\alpha(t)-\check{y}^*(t)\rVert_2^2+4\lVert\check{y}^*(t)-\bar{y}(t)\rVert_2^2\\\nonumber
&\stackrel{\eqref{eq:prop_continuity_noise_bound},\eqref{eq:app_prop_proof2},\eqref{eq:app_prop_proof1}}{\leq}
c_2^{\rmu}\bar{\varepsilon}^2+c_3^{\rmu}\bar{\varepsilon}^2\lVert\alpha(t)\rVert_2^2.
\end{align}
Applying $\lVert a+b\rVert_R^2\leq(1+\bar{\varepsilon}^{\beta_\sigma})\lVert a\rVert_R^2+(1+\frac{1}{\bar{\varepsilon}^{\beta_{\sigma}}})\lVert b\rVert_R^2$, which holds for arbitrary vectors $a$, $b$, we have
\begin{align}\nonumber
\sum_{k=-n}^{-1}\lVert u_{t+k}-\check{u}^{\rms*}(t)&\rVert_R^2
\leq\sum_{k=-n}^{-1}\left(1+\frac{1}{\bar{\varepsilon}^{\beta_{\sigma}}}\right)\lVert u_{t+k}-u_{t+k}'\rVert_R^2\\\label{eq:app_input_traj_diff}
&+(1+\bar{\varepsilon}^{\beta_{\sigma}})\lVert u_{t+k}'-\check{u}^{\rms*}(t)\rVert_R^2,
\end{align}
and analogously for the output.
Using additionally
\begin{align*}
&\sum_{k=-n}^{-1}\lVert u_{t+k}-u_{t+k}'\rVert_R^2+\lVert y_{t+k}-y_{t+k}'\rVert_Q^2
\stackrel{\eqref{eq:prop_continuity_noise_bound}}{\leq} \lambda_{\max}(Q,R)\bar{\varepsilon}^2,\\
&\sum_{k=-n}^{-1}\lVert u_{t+k}'-\check{u}^{\rms*}(t)\rVert_R^2
+\lVert y_{t+k}'-\check{y}^{\rms*}(t)\rVert_Q^2\leq\check{J}_L^*(\xi_t',\mathcal{D}_t),
\end{align*}
we obtain
\begin{align}\label{eq:app_prop_proof4}
&\sum_{k=-n}^{-1}\lVert u_{t+k}-\check{u}^{\rms*}(t)\rVert_R^2-\lVert u_{t+k}'-\check{u}^{\rms*}(t)\rVert_R^2\\\nonumber
&+\sum_{k=-n}^{-1}\lVert y_{t+k}-\check{y}^{\rms*}(t)\rVert_Q^2-\lVert y_{t+k}'-\check{y}^{\rms*}(t)\rVert_Q^2\\\nonumber
\leq&\bar{\varepsilon}^{\beta_{\sigma}}\check{J}_L^*(\xi_t',\mathcal{D}_t)
+c_4^{\rmu}(\bar{\varepsilon}^2+\bar{\varepsilon}^{2-\beta_{\sigma}})
\end{align}
for a suitably defined $c_4^{\rmu}>0$.
Using the constructed candidate solution, we have
\begin{align}\label{eq:app_J}
&J_L^*(\xi_t)-\check{J}_L^*(\xi_t',\calD_t)\\\nonumber
\stackrel{\eqref{eq:app_prop_proof4}}{\leq} &c_4^{\rmu}(\bar{\varepsilon}^2+\bar{\varepsilon}^{2-\beta_{\sigma}})+\bar{\varepsilon}^{\beta_{\sigma}}\check{J}_L^*(\xi_t',\calD_t)+\frac{\bar{\lambda}_{\sigma}}{\bar{\varepsilon}^{\beta_{\sigma}}}\lVert\sigma(t)\rVert_2^2\\\nonumber
&+\bar{\lambda}_{\alpha}\bar{\varepsilon}^{\beta_{\alpha}}(\lVert\alpha(t)-\alpha^{\rms\rmr}_{\rmlin}(x_t)\rVert_2^2-\lVert\check{\alpha}^*(t)-\alpha^{\rms\rmr}_{\rmlin}(\calD_t)\rVert_2^2)\\\nonumber
\stackrel{\eqref{eq:app_alpha_diff},\eqref{eq:app_sigma_diff}}{\leq}&c_4^{\rmu}(\bar{\varepsilon}^2+\bar{\varepsilon}^{2-\beta_{\sigma}})+2\bar{\varepsilon}^{\beta_{\sigma}}\check{J}_L^*(\xi_t',\calD_t)+\bar{\lambda}_{\sigma}c_2^{\rmu}\bar{\varepsilon}^{2-\beta_{\sigma}}\\\nonumber
&+\bar{\lambda}_{\alpha}\bar{\varepsilon}^{\beta_{\alpha}}\lVert H_{ux,t}^\dagger\rVert_2^2(\bar{\varepsilon}^{2}+\bar{\varepsilon}^{2-\beta_{\sigma}})+\bar{\lambda}_{\sigma}c_3^{\rmu}\bar{\varepsilon}^{2-\beta_{\sigma}}\lVert\alpha(t)\rVert_2^2,
\end{align}
where we also use $\bar{\lambda}_{\alpha}\bar{\varepsilon}^{\beta_{\alpha}}\lVert\check{\alpha}^*(t)-\alpha^{\rms\rmr}_{\rmlin}(\calD_t)\rVert_2^2\leq\check{J}_L^*(\xi_t',\calD_t)$ for the second inequality.
The term $\lVert\alpha(t)\rVert_2^2$ is bounded as
\begin{align*}
&\quad\lVert\alpha(t)\rVert_2^2\\
&\stackrel{\eqref{eq:ab_ineq2}}{\leq}2\lVert\alpha(t)-\check{\alpha}^*(t)\rVert_2^2
+4\lVert\check{\alpha}^*(t)-\alpha^{\rms\rmr}_{\rmlin}(\calD_t)\rVert_2^2
+4\lVert\alpha^{\rms\rmr}_{\rmlin}(\calD_t)\rVert_2^2\\
&\stackrel{\eqref{eq:app_alpha_diff_aux}}{\leq} 2\lVert H_{ux,t}^\dagger\rVert_2^2\bar{\varepsilon}^2+4\frac{1}{\bar{\lambda}_{\alpha}\bar{\varepsilon}^{\beta_{\alpha}}}\check{J}_L^*(\xi_t',\calD_t)+4\lVert\alpha^{\rms\rmr}_{\rmlin}(\calD_t)\rVert_2^2,
\end{align*}
where $\lVert\alpha_{\rmlin}^{\rms\rmr}(\calD_t)\rVert_2^2$ is uniformly bounded by assumption.
Plugging this into~\eqref{eq:app_J} and using $\beta_{\alpha}+2\beta_{\sigma}<2$, the term with the smallest exponent of $\bar{\varepsilon}$ is proportional to $\bar{\varepsilon}^{2-\beta_{\sigma}}\left(1+\lVert H_{ux,t}^\dagger\rVert_2^2\bar{\varepsilon}^{\beta_{\alpha}}\right)$.
Thus, letting $\bar{\varepsilon}_{\max}<1$, we obtain~\eqref{eq:prop_continuity_cost} for appropriately defined constants $c_{\rmJ,\rma},c_{\rmJ,\rmb}>0$.\\
\textbf{(ii) Proof of~\eqref{eq:prop_continuity}}\\
We note that 
\begin{align}\label{eq:prop_continuity_proof_first_n_steps}
&\lVert\bar{u}_{[-n,-1]}^*(t)-\check{u}_{[-n,-1]}^*(t)\rVert_2\\\nonumber
=&\lVert u_{[t-n,t-1]}-u'_{[t-n,t-1]}\rVert_2\stackrel{\eqref{eq:prop_continuity_noise_bound}}{\leq}\bar{\varepsilon},
\end{align}
which implies a bound of the form~\eqref{eq:prop_continuity} for the first $n$ elements.
Therefore, we focus on $k\in\mathbb{I}_{[0,L]}$ in the following.\\
\textbf{(ii).a Strong convexity of Problem~\eqref{eq:DD_MPC_NL_nominal}}\\
Exploiting the initial condition~\eqref{eq:DD_MPC_NL_nominal_init} and the terminal constraint~\eqref{eq:DD_MPC_NL_nominal_TEC}, the terms in~\eqref{eq:DD_MPC_NL_nominal_cost} involving $\bar{u}(t)$ and $u^\rms(t)$ are
\begin{align*}
\sum_{k=-n}^{-1}\lVert u_{t+k}'+F_k\tilde{\sigma}_{\mathrm{init}}-u^\rms(t)\rVert_R^2+\sum_{k=0}^{L-n-1}\lVert\bar{u}_k(t)-u^\rms(t)\rVert_R^2,
\end{align*}
where the matrix $F_k$ picks the $k$-th input component of $\tilde{\sigma}_{\mathrm{init}}$.
The Hessian w.r.t.\ $\begin{bmatrix}\bar{u}_{[0,L-n-1]}(t)\\u^\rms(t)\end{bmatrix}$ is equal to
\begin{align*}
2\begin{bmatrix}I_{L-n}\otimes R&-\mathbbm{1}_{L-n}\otimes R\\
-\mathbbm{1}_{L-n}^\top\otimes R&L R
\end{bmatrix}.
\end{align*}
This matrix is positive definite since its Schur complement w.r.t.\ the left upper block is
\begin{align*}
&2L R-2(\mathbbm{1}_{L-n}^\top\otimes R)(I_{L-n}\otimes R)^{-1}(\mathbbm{1}_{L-n}\otimes R)
=2nR\succ0.
\end{align*}
Thus, Problem~\eqref{eq:DD_MPC_NL_nominal} is strongly convex in $\begin{bmatrix}\bar{u}_{[0,L-n-1]}(t)\\u^\rms(t)\end{bmatrix}$ and hence, using $\bar{u}_k(t)=u^\rms(t)$ for $k\in\mathbb{I}_{[L-n,L]}$ due to~\eqref{eq:DD_MPC_NL_nominal_TEC}, it is strongly convex in $\bar{u}_{[0,L]}(t)$.\\
\textbf{(ii).b Bound on $\lVert\bar{u}^*(t)-\tilde{u}(t)\rVert_2^2$}\\
We denote the optimal solution of~\eqref{eq:DD_MPC_NL_nominal} with
\begin{align}\label{eq:prop_continuity_proof_sigma_tilde_def}
\tilde{\sigma}=\tilde{\sigma}_{\varepsilon}\coloneqq\begin{bmatrix}u_{[t-n,t-1]}-u_{[t-n,t-1]}'\\
y_{[t-n,t-1]}-y_{[t-n,t-1]}'\\
\sigma^*(t)+H_{L+n+1}\left(y'(t)-y_{[t-N,t-1]}\right)\alpha^*(t)\end{bmatrix}
\end{align}
by $\tilde{\alpha}(t)$, $\tilde{u}^s(t)$, $\tilde{y}^s(t)$, $\tilde{u}(t)$, $\tilde{y}(t)$, and the corresponding cost by $\tilde{J}_L$.
Since $\check{J}_L^*(\xi_t',\calD_t)\leq\bar{J}$ by assumption and using~\eqref{eq:prop_continuity_cost} by Part (i), we infer $J_L^*(\xi_t)<\infty$, i.e., Problem~\eqref{eq:DD_MPC_NL} is feasible.
The optimal solution of Problem~\eqref{eq:DD_MPC_NL} is a feasible (but not necessarily optimal) solution of~\eqref{eq:DD_MPC_NL_nominal} with $\tilde{\sigma}=\tilde{\sigma}_{\varepsilon}$, i.e., 
\begin{align}\label{eq:prop_continuity_proof_J_tilde_bound}
\tilde{J}_L\leq J_L^*(\xi_t)-\frac{\bar{\lambda}_{\sigma}}{\bar{\varepsilon}^{\beta_{\sigma}}}\lVert\sigma^*(t)\rVert_2^2.
\end{align}
Here, the term $\frac{\bar{\lambda}_{\sigma}}{\bar{\varepsilon}^{\beta_{\sigma}}}\lVert\sigma^*(t)\rVert_2^2$ is due to the fact that, in contrast to Problem~\eqref{eq:DD_MPC_NL}, Problem~\eqref{eq:DD_MPC_NL_nominal} does not include the slack variable $\sigma(t)$ in the cost.
We now construct a candidate solution for Problem~\eqref{eq:DD_MPC_NL} based on the optimal solution of~\eqref{eq:DD_MPC_NL_nominal} with $\tilde{\sigma}=\tilde{\sigma}_{\varepsilon}$.
To this end, we choose $\bar{u}(t)=\tilde{u}(t)$, $\bar{y}(t)=\tilde{y}(t)$, $\alpha(t)=\tilde{\alpha}(t)$, $u^\rms(t)=\tilde{u}^s(t)$, $y^\rms(t)=\tilde{y}^s(t)$, and
\begin{align}\label{eq:prop_continuity_proof_sigma_check}
\sigma(t)=\sigma^*(t)+H_{L+n+1}(\varepsilon^\rmd)(\tilde{\alpha}(t)-\alpha^*(t)).
\end{align}
Note that $(\alpha(t),u^\rms(t),y^\rms(t),\bar{u}(t),\bar{y}(t))$ is a feasible solution of~\eqref{eq:DD_MPC_NL}, whose cost we denote by $\bar{J}_L'$, which satisfies $\bar{J}_L'\geq J_L^*(\xi_t)$ by optimality.
By definition,
\begin{align}\label{eq:prop_continuity_proof_cost_diff}
\bar{J}_L'-\tilde{J}_L=\frac{\bar{\lambda}_{\sigma}}{\bar{\varepsilon}^{\beta_{\sigma}}}\lVert\sigma(t)\rVert_2^2.
\end{align}
Moreover, it follows from the definition of $J_L^*(\xi_t)$ that
\begin{align}\label{eq:prop_continuity_alpha_opt_bound}
\lVert\alpha^*(t)-\alpha^{\rms\rmr}_{\rmlin}(\calD_t)\rVert_2^2\leq&\frac{J_L^*(\xi_t)}{\bar{\lambda}_{\alpha}\bar{\varepsilon}^{\beta_{\alpha}}},\\
\label{eq:prop_continuity_sigma_opt_bound}
\lVert\sigma^*(t)\rVert_2^2\leq&\bar{\varepsilon}^{\beta_{\sigma}}\frac{J_L^*(\xi_t)}{\bar{\lambda}_{\sigma}}.
\end{align}
Similarly, we have
\begin{align}\label{eq:prop_continuity_alpha_tilde_bound}
\lVert\tilde{\alpha}(t)-\alpha^{\rms\rmr}_{\rmlin}(\calD_t)\rVert_2^2&\leq\frac{\tilde{J}_L}{\bar{\lambda}_{\alpha}\bar{\varepsilon}^{\beta_{\alpha}}}
\stackrel{\eqref{eq:prop_continuity_proof_J_tilde_bound}}{\leq}\frac{J_L^*(\xi_t)}{\bar{\lambda}_{\alpha}\bar{\varepsilon}^{\beta_{\alpha}}}.
\end{align}
With the definition of $\sigma(t)$ in~\eqref{eq:prop_continuity_proof_sigma_check}, we obtain
\begin{align}\label{eq:prop_continuity_sigma_diff_bound}
&\quad\lVert\sigma(t)\rVert_2^2-\lVert\sigma^*(t)\rVert_2^2\\\nonumber
&\stackrel{\eqref{eq:ab_ineq},\eqref{eq:prop_continuity_proof_sigma_check}}{\leq}
c_{\sigma,1}^2\bar{\varepsilon}^2\lVert\tilde{\alpha}(t)-\alpha^*(t)\rVert_2^2+2c_{\sigma,1}\bar{\varepsilon}\lVert\tilde{\alpha}(t)-\alpha^*(t)\rVert_2\lVert\sigma^*(t)\rVert_2\\\nonumber
&\stackrel{\eqref{eq:ab_ineq2},\text{\eqref{eq:prop_continuity_alpha_opt_bound}--\eqref{eq:prop_continuity_alpha_tilde_bound}}}{\leq}
c_{\sigma,2}\bar{\varepsilon}^{2-\beta_{\alpha}}J_L^*(\xi_t)+c_{\sigma,3}\bar{\varepsilon}^{1+\frac{\beta_{\sigma}-\beta_{\alpha}}{2}}J_L^*(\xi_t)
\end{align}
for suitably defined $c_{\sigma,i}>0$, $i\in\mathbb{I}_{[1,3]}$.
Recall that
\begin{align}\label{eq:prop_continuity_sandwich_bound}
\tilde{J}_L+\frac{\bar{\lambda}_{\sigma}}{\bar{\varepsilon}^{\beta_{\sigma}}}\lVert\sigma^*(t)\rVert_2^2\stackrel{\eqref{eq:prop_continuity_proof_J_tilde_bound}}{\leq} J_L^*(\xi_t)\leq\bar{J}_L'.
\end{align}
Using that the solution with cost $\bar{J}_L'$ is feasible for Problem~\eqref{eq:DD_MPC_NL}, strong convexity as in Part (ii.a) implies
\begin{align}\label{eq:prop_continuity_proof_strong_convex_u}
\lVert\bar{u}(t)-\bar{u}^*(t)\rVert_2^2\leq c_{\rmu,\rmc}(\bar{J}_L'-J_L^*(\xi_t))
\end{align}
for some $c_{\rmu,\rmc}>0$, compare~\cite[Inequality (11)]{koehler2020nonlinear}.
Together with $\bar{u}(t)=\tilde{u}(t)$, this implies
\begin{align}\label{eq:prop_continuity_proof_strong_convex}
&\lVert\tilde{u}(t)-\bar{u}^*(t)\rVert_2^2\stackrel{\eqref{eq:prop_continuity_proof_strong_convex_u}}{\leq}c_{\rmu,\rmc}(\bar{J}_L'-J_L^*(\xi_t))\\\nonumber
\stackrel{\eqref{eq:prop_continuity_sandwich_bound}}{\leq}&c_{\rmu,\rmc}\left(\bar{J}_L'-\tilde{J}_L-\frac{\bar{\lambda}_{\sigma}}{\bar{\varepsilon}^{\beta_{\sigma}}}\lVert\sigma^*(t)\rVert_2^2\right)\\\nonumber
\stackrel{\eqref{eq:prop_continuity_proof_cost_diff},\eqref{eq:prop_continuity_sigma_diff_bound}}{\leq}
&c_{\rmu,\rmc}\bar{\lambda}_{\sigma}\left(c_{\sigma,2}\bar{\varepsilon}^{2-\beta_{\alpha}-\beta_{\sigma}}+c_{\sigma,3}\bar{\varepsilon}^{\frac{2-\beta_{\alpha}-\beta_{\sigma}}{2}}\right)J_L^*(\xi_t).
\end{align}
\textbf{(ii).c Bound on $\lVert\tilde{u}(t)-\check{u}^*(t)\rVert_2^2$}\\
Using strong convexity (compare Part (ii).a of the proof) and the LICQ (Assumption~\ref{ass:LICQ}), the optimal solution of Problem~\eqref{eq:DD_MPC_NL_nominal} is continuous and piecewise affine in $(\xi_t',\tilde{\sigma})$, compare~\cite{bemporad2002explicit}.
In particular, using that the partition of the optimal solution admits finitely many regions, it is uniformly Lipschitz continuous in $\tilde{\sigma}$.
Moreover, Problem~\eqref{eq:DD_MPC_NL_nominal} is feasible for $\tilde{\sigma}=0$ by assumption, i.e., by $\check{J}_L^*(\xi_t',\calD_t)\leq\bar{J}$, and for $\tilde{\sigma}=\tilde{\sigma}_{\varepsilon}$ since Problem~\eqref{eq:DD_MPC_NL} is feasible, cf. Part (i).
Thus, there exists $c_{\sigma,4}>0$ such that
\begin{align}\label{eq:prop_continuity_proof_c_sigma_tilde}
\lVert \tilde{u}(t)-\check{u}^*(t)\rVert_2&\leq c_{\sigma,4}\lVert \tilde{\sigma}_{\varepsilon}\rVert_2.
\end{align}
The definition of $\tilde{\sigma}_{\varepsilon}$ in~\eqref{eq:prop_continuity_proof_sigma_tilde_def} together with~\eqref{eq:prop_continuity_noise_bound},~\eqref{eq:prop_continuity_alpha_opt_bound},~\eqref{eq:prop_continuity_sigma_opt_bound}, and the triangle inequality implies
\begin{align}\label{eq:prop_continuity_proof_sigma_tilde_bound}
&\lVert\tilde{\sigma}_{\varepsilon}\rVert_2
\leq c_{\sigma,4}\bar{\varepsilon}+\bar{\varepsilon}^{\frac{\beta_{\sigma}}{2}}\sqrt{\frac{J_L^*(\xi_t)}{\bar{\lambda}_\sigma}}
+c_{\sigma,5}\bar{\varepsilon}^{1-\frac{\beta_{\alpha}}{2}}\sqrt{J_L^*(\xi_t)}
\end{align}
for some $c_{\sigma,4},c_{\sigma,5}>0$.
Combining~\eqref{eq:prop_continuity_cost},~\eqref{eq:prop_continuity_proof_c_sigma_tilde},~\eqref{eq:prop_continuity_proof_sigma_tilde_bound}, and $\check{J}_L^*(\xi_t',\calD_t)\leq\bar{J}$, the term $\lVert\tilde{u}(t)-\check{u}^*(t)\rVert_2$ is bounded by $\beta_{\rmu}'(\bar{\varepsilon})$ with some function $\beta_{\rmu}'$.
It is simple to verify that $\beta_{\rmu}'\in\mathcal{K}_{\infty}$ if $\beta_{\alpha}+2\beta_{\sigma}<2$.
Together with~\eqref{eq:prop_continuity_cost},~\eqref{eq:prop_continuity_proof_first_n_steps},~\eqref{eq:prop_continuity_proof_strong_convex}, and $\check{J}_L^*(\xi_t',\calD_t)\leq\bar{J}$, this implies the existence of $\beta_{\rmu}\in\mathcal{K}_{\infty}$ such that~\eqref{eq:prop_continuity} holds.
\end{proof}

\section{Proof of Theorem~\ref{thm:NL_stability}}\label{sec:app_thm_stab_proof}

\begin{proof}
After stating some preliminaries in Part (i), we show in Part (ii) that, if $\Vert\xi_{k+n}-\xi_{k+j}\rVert_2$, $j\in\mathbb{I}_{[0,n-1]}$, is bounded by some constant for $k=t-N,t-N+n,\dots,t-n$, then it is bounded by the same constant for $k=t$.
In combination with Lemma~\ref{lem:pred_error_NL}, this serves as a bound on the ``perturbation'' of the data in Problem~\eqref{eq:DD_MPC_NL}, which we combine with the robustness of data-driven MPC in Section~\ref{subsec:NL_continuity} and the results in our companion paper~\cite{berberich2021linearpart1} to prove practical stability in Part (iii).

For simplicity, we assume w.l.o.g.\ that $\frac{N}{n}\in\mathbb{I}_{\geq0}$, i.e., $N$ is divisible by $n$.
It is straightforward to show that the Lyapunov function candidate $V(\xi_t',\calD_t)$ admits quadratic lower and upper bounds of the form
\begin{align}\label{eq:thm_NL_stab_proof_lower_upper}
c_\rml\lVert\xi_t'-\xi^{\rms\rmr}_{\rmlin}(x_t)\rVert_2^2\leq V(\xi_t',\calD_t)
\leq c_{\rmu}\lVert\xi_t'-\xi^{\rms\rmr}_{\rmlin}(x_t)\rVert_2^2
\end{align}
for some $c_{\rml},c_{\rmu}>0$.
The proof of this fact is provided for completeness in Appendix~\ref{sec:app_affine_MPC} (Theorem~\ref{thm:stab_affine_nominal}).
Note that $\xi_N'$ lies in the set $\{\xi'\mid V(\xi',\mathcal{D}_N)\leq V_{\mathrm{ROA}}\}$, which is compact due to the lower bound~\eqref{eq:thm_NL_stab_proof_lower_upper} and Assumption~\ref{ass:NL_sys}, i.e., compactness of the (linearized) steady-state manifold~\cite[Assumption 5]{berberich2021linearpart1}.
Similar to the proof of~\cite[Proposition 2]{berberich2021linearpart1}, Lipschitz continuity of the dynamics~\eqref{eq:sys_NL} and compactness of $\mathbb{U}$ imply that the union of the $N$-step reachable sets of the linearized and the nonlinear dynamics (compare~\cite{schuermann2018reachset}) starting in $\{\xi'\mid V(\xi',\mathcal{D}_N)\leq V_{\mathrm{ROA}}\}$, which we denote by $X$, is compact.
In the proof, we show that $\{\xi'\mid V(\xi',\mathcal{D}_N)\leq V_{\mathrm{ROA}}\}$ is positively invariant and thus, the bound in Lemma~\ref{lem:pred_error_NL} as well as Lipschitz continuity of the map~\eqref{eq:NL_x_xi_Lipschitz} hold uniformly throughout the proof.
Similarly, whenever we apply Proposition~\ref{prop:continuity}, we use that the bounds hold uniformly for all linearized dynamics considered in the proof (after potential modification of some constants).\\
\textbf{(i) Preliminaries}\\
Let~\eqref{eq:DD_MPC_NL} be feasible at time $t=ni\geq N$ and suppose
\begin{align}\label{eq:thm_NL_stability_recursive_bound}
\lVert\xi_{k+n}-\xi_{k+j}\rVert_2\leq c_{\xi,0}\theta,\quad j\in\mathbb{I}_{[0,n-1]},
\end{align}
for $k=t-N,t-N+n,\dots,t-n$ with some $c_{\xi,0}>0$.
Note that this holds at initial time $t=N$ with $c_{\xi,0}=2$ due to $\lVert\xi_N-\xi_k\rVert_2\leq\theta$ for $k\in\mathbb{I}_{[0,N-1]}$.
Clearly,~\eqref{eq:thm_NL_stability_recursive_bound} implies
\begin{align}\label{eq:thm_NL_stability_proof_xi_bound}
\lVert\xi_t-\xi_k\rVert_2\leq c_{\theta,1}\theta,\quad k\in\mathbb{I}_{[t-N,t-1]},
\end{align}
for some $c_{\theta,1}>0$.
Using Lemma~\ref{lem:pred_error_NL}, the difference between $y_{t+k}$ and $y_k'(t)$ is bounded for $k\in\mathbb{I}_{[-N,-1]}$ by
\begin{align}\label{eq:thm_NL_stab_Delta_bound}
&\lVert y_{t+k}-y_k'(t)\rVert_2=\lVert\Delta_{t,k}\rVert_2
\stackrel{\eqref{eq:lem_pred_error_NL}}{\leq}c_{\Delta}\sum_{i=t-N}^{t-1}\lVert x_t-x_i\rVert_2^2\\\nonumber
&\stackrel{\eqref{eq:NL_x_xi_Lipschitz}}{\leq} c_\rmL c_{\Delta}\sum_{i=t-N}^{t-1}\lVert\xi_t-\xi_i\rVert_2^2
\stackrel{\eqref{eq:thm_NL_stability_proof_xi_bound}}{\leq} c_\rmL c_{\Delta}Nc_{\theta,1}^2\theta^2\eqqcolon c_{\theta,2}\theta^2
\end{align}
for some $c_\rmL>0$, using that $T_\rmL$ in~\eqref{eq:NL_x_xi_Lipschitz} is Lipschitz continuous.

Next, we provide an extended state $\xi_t'=\begin{bmatrix}u_{[t-n,t-1]}'\\y_{[t-n,t-1]}'\end{bmatrix}$ satisfying~\eqref{eq:NL_trafo_xi_x_lin} with a suitable bound on $\lVert\xi_t-\xi_t'\rVert_2$.
We write $\hat{x}_{[t-n,t]}$ for the state trajectory resulting from an application of the closed-loop input $u_{[t-n,t-1]}$ to the initial condition $x_{t-n}$ for the dynamics linearized at $x_t$.
We define the input component $u_{[t-n,t-1]}'$ of $\xi_t'$ as
\begin{align}\label{eq:thm_NL_stab_proof_bar_xi_t_u_bound1}
u_{[t-n,t-1]}'=u_{[t-n,t-1]}+\Gamma_\rmc(x_t)^\dagger(x_t-\hat{x}_t),
\end{align}
where $\Gamma_\rmc(x_t)$ is the controllability matrix of the dynamics linearized at $x_t$.
According to Assumption~\ref{ass:NL_sys}, i.e.,~\cite[Assumption 3]{berberich2021linearpart1}, the Moore-Penrose inverse $\Gamma_\rmc(x_t)^\dagger$ is uniformly bounded.
Further, we define $y_{[t-n,t-1]}'$ as the output trajectory for the dynamics linearized at $x_t$ with input $u_{[t-n,t-1]}'$ and initial condition $x_{t-n}$.
Similar to Lemma~\ref{lem:pred_error_NL}, we can show that
\begin{align}\label{eq:thm_NL_stab_proof_bar_xi_t_x_bound}
\lVert x_t-\hat{x}_t\rVert_2\leq c_{\theta,3}\sum_{j=t-n}^{t-1}\lVert x_t-x_j\rVert_2^2
\stackrel{\eqref{eq:NL_x_xi_Lipschitz},\eqref{eq:thm_NL_stability_recursive_bound}}{\leq}
c_{\theta,4}\theta^2
\end{align}
for some $c_{\theta,3},c_{\theta,4}>0$.
Combining this with~\eqref{eq:thm_NL_stab_proof_bar_xi_t_u_bound1}, we obtain
\begin{align}\label{eq:thm_NL_stab_proof_bar_xi_t_u_bound2}
\lVert u_{[t-n,t-1]}-u_{[t-n,t-1]}'\rVert_2\leq c_{\theta,5}\theta^2
\end{align}
with some $c_{\theta,5}>0$.
We write $\underline{x}_{[t-n,t-1]}$ and $\underline{y}_{[t-n,t-1]}$ for the state and output resulting from an application of $u_{[t-n,t-1]}'$ to the nonlinear system~\eqref{eq:sys_NL} with initial state $x_{t-n}$.
Using Lipschitz continuity of~\eqref{eq:sys_NL} by Assumption~\ref{ass:NL_sys} (i.e.,~\cite[Assumption 1]{berberich2021linearpart1}), there exists $c_{\theta,6}>0$ such that
\begin{align}\label{eq:thm_NL_stab_proof_bar_xi_t_y_bound1}
\lVert y_{[t-n,t-1]}-\underline{y}_{[t-n,t-1]}\rVert_2\leq c_{\theta,6}\lVert u_{[t-n,t-1]}-u_{[t-n,t-1]}'\rVert_2.
\end{align}
Using again similar arguments as in Lemma~\ref{lem:pred_error_NL}, we can show
\begin{align}\nonumber
&\lVert\underline{y}_{[t-n,t-1]}-y_{[t-n,t-1]}'\rVert_2\leq c_{\theta,7}\sum_{j=t-n}^{t-1}\lVert x_t-\underline{x}_j\rVert_2^2\\\nonumber
\stackrel{\eqref{eq:ab_ineq2}}{\leq}
&2c_{\theta,7}\sum_{j=t-n}^{t-1}(\lVert x_t-x_j\rVert_2^2+\lVert x_j-\underline{x}_j\rVert_2^2)\\\label{eq:thm_NL_stab_proof_bar_xi_t_y_bound2}
\stackrel{\eqref{eq:NL_x_xi_Lipschitz},\eqref{eq:thm_NL_stability_recursive_bound}}{\leq}
&c_{\theta,8}\theta^2+c_{\theta,9}\lVert u_{[t-n,t-1]}-u_{[t-n,t-1]}'\rVert_2^2,
\end{align}
for suitably defined $c_{\theta,i}>0$, $i\in\mathbb{I}_{[7,9]}$, where the last inequality also uses Lipschitz continuity of the nonlinear dynamics~\eqref{eq:sys_NL}.
Combining~\eqref{eq:thm_NL_stab_proof_bar_xi_t_u_bound2}--\eqref{eq:thm_NL_stab_proof_bar_xi_t_y_bound2} and letting $\bar{\theta}<1$ such that $\theta^4<\theta^2$, there exists $c_{\theta,10}>0$ such that
\begin{align}\label{eq:thm_NL_stab_proof_xi_bar_bound}
\lVert\xi_t-\xi_t'\rVert_2\leq c_{\theta,10}\theta^2.
\end{align}
\textbf{(ii) Bound on $\Vert\xi_{t+n}-\xi_{t+j}\rVert_2$, $i\in\mathbb{I}_{[0,n-1]}$}\\
In this part of the proof, we show that, under suitable conditions,~\eqref{eq:thm_NL_stability_recursive_bound} holds for $k=t$ if it holds for $k=t-N,t-N+n,\dots,t-n$.
We denote the extended state~\eqref{eq:xi_def_new} corresponding to $(u^{\rms*}(t),y^{\rms*}(t))$ by $\xi^{\rms*}(t)$.
For $j\in\mathbb{I}_{[0,n-1]}$, it holds that
\begin{align}\label{eq:thm_NL_stab_proof_bound_xi1}
\lVert\xi_{t+n}-\xi_{t+j}\rVert_2
\leq&\lVert\xi_{t+n}-\xi^{\rms*}(t)\rVert_2+\lVert\xi_{t+j}-\xi^{\rms*}(t)\rVert_2\\\nonumber
\leq&c_{\xi,1}(\lVert\xi_{t+n}-\xi^{\rms*}(t)\rVert_2+\lVert\xi_t-\xi^{\rms*}(t)\rVert_2)
\end{align}
for some $c_{\xi,1}>0$, where we use that $\xi_{t+n}$ and $\xi_t$ contain all elements of $\xi_{t+j}$.
Using optimality in Problem~\eqref{eq:DD_MPC_NL}, we have
\begin{align}\label{eq:thm_NL_stab_proof_bound_xi2}
\lVert\xi_t-\xi^{\rms*}(t)\rVert_2^2\leq\frac{1}{\lambda_{\min}(Q,R)}J_L^*(\xi_t).
\end{align}
Moreover, using $u_{t+k}=\bar{u}_k^*(t)$ for $k\in\mathbb{I}_{[0,n-1]}$ due to the $n$-step scheme, cf. Algorithm~\ref{alg:MPC_n_step_NL}, we can derive
\begin{align}\nonumber
\lVert\xi_{t+n}-\xi^{\rms*}(t)\rVert_2
=&\sum_{k=0}^{n-1}\lVert u_{t+k}-u^{\rms*}(t)\rVert_2+\lVert y_{t+k}-y^{\rms*}(t)\rVert_2\\\nonumber
\leq&\sum_{k=0}^{n-1}\lVert\bar{u}_k^*(t)-u^{\rms*}(t)\rVert_2+\lVert y_{t+k}-\bar{y}_k^*(t)\rVert_2\\\nonumber
&\quad+\lVert\bar{y}_k^*(t)-y^{\rms*}(t)\rVert_2\\\nonumber
\leq&\sum_{k=0}^{n-1}\lVert y_{t+k}-y_{k}'(t)\rVert_2+\lVert y_k'(t)-\bar{y}_k^*(t)\rVert_2\\\label{eq:thm_NL_stab_proof_bound_xi3}
&+\frac{1}{\sqrt{\lambda_{\min}(Q,R)}}\sqrt{J_L^*(\xi_t)}.
\end{align}
Analogously to~\eqref{eq:thm_NL_stab_Delta_bound}, there exists $c_{\xi,2}>0$ such that
\begin{align}\label{eq:thm_NL_stab_proof_bound_xi4}
\sum_{k=0}^{n-1}\lVert y_{t+k}-y_k'(t)\rVert_2&\leq c_{\xi,2}\sum_{k=-N}^{n-1}\lVert \xi_{t+k}-\xi_t\rVert_2^2\\\nonumber
&\stackrel{\eqref{eq:thm_NL_stability_proof_xi_bound}}{\leq} c_{\xi,2}\sum_{k=0}^{n-1}\lVert \xi_{t+k}-\xi_t\rVert_2^2+c_{\xi,2}Nc_{\theta,1}^2\theta^2.
\end{align}
We now use a bound on the deviation between the predicted output $\bar{y}^*(t)$ in Problem~\eqref{eq:DD_MPC_NL} and the output of the linearized dynamics $y'(t)$.
% as defined in Section~\ref{subsec:NL_bound}.
This bound is shown for data-driven MPC of LTI systems in~\cite[Lemma 2]{berberich2021guarantees} but the result remains true for affine systems since only differences between trajectories are considered.
With the ``noise bound'' $\bar{\varepsilon}=c_{\theta,2}\theta^2$ (cf.~\eqref{eq:thm_NL_stab_Delta_bound}),~\cite[Lemma 2]{berberich2021guarantees} implies
\begin{align}\label{eq:thm_NL_stab_proof_bound_xi5}
\lVert y_k'(t)-\bar{y}_{k}^*(t)\rVert_2
\leq &c_{\xi,3}\theta^2\lVert\alpha^*(t)\rVert_2+c_{\xi,4}\theta^2+c_{\xi,5}\lVert\sigma^*(t)\rVert_2
\end{align}
for $k\in\mathbb{I}_{[0,n-1]}$ with some $c_{\xi,i}>0$, $i\in\mathbb{I}_{[4,6]}$.
Finally, using the definition of $J_L^*(\xi_t)$, we have
\begin{align}\label{eq:thm_NL_stab_proof_alpha_bound}
\lVert\alpha^*(t)\rVert_2&\leq\sqrt{\frac{1}{\lambda_{\alpha}}J_L^*(\xi_t)}+\lVert\alpha^{\rms\rmr}_{\rmlin}(\calD_t)\rVert_2,\\\label{eq:thm_NL_stab_proof_sigma_bound}
\lVert\sigma^*(t)\rVert_2&\leq\sqrt{\frac{1}{\lambda_{\sigma}}J_L^*(\xi_t)},
\end{align}
where $\lVert\alpha^{\rms\rmr}_{\rmlin}(\calD_t)\rVert_2$ is uniformly bounded by assumption.

Let now $\lambda_{\alpha}=\bar{\lambda}_{\alpha}\theta^{2\beta_{\alpha}}$ and $\lambda_{\sigma}=\frac{\bar{\lambda}_{\sigma}}{\theta^{2\beta_{\sigma}}}$ for some $\bar{\lambda}_{\alpha},\bar{\lambda}_{\sigma},\beta_{\alpha},\beta_{\sigma}>0$ with $\beta_{\alpha}>1$, $\beta_{\alpha}+2\beta_{\sigma}<2$, compare Proposition~\ref{prop:continuity}.
There exist $c_{\xi,6},c_{\xi,7}>0$ such that, for $k\in\mathbb{I}_{[0,n-1]}$,
\begin{align}\label{eq:thm_NL_stab_proof_bound_xi6}
\lVert y_{k}'(t)-\bar{y}_k^*(t)\rVert_2\leq &c_{\xi,6}\theta^2+c_{\xi,7}(\theta^{2-\beta_{\alpha}}+\theta^{\beta_{\sigma}}) \sqrt{J_L^*(\xi_t)}.
\end{align}
Combining~\eqref{eq:thm_NL_stab_proof_bound_xi1}--\eqref{eq:thm_NL_stab_proof_bound_xi4} and~\eqref{eq:thm_NL_stab_proof_bound_xi6}, we infer for $j\in\mathbb{I}_{[0,n-1]}$
\begin{align}\label{eq:thm_NL_stab_proof_bound_xi7}
&\lVert\xi_{t+n}-\xi_{t+j}\rVert_2\\\nonumber
\leq&c_{\xi,1}\Bigg(\frac{2}{\sqrt{\lambda_{\min}(Q,R)}}\sqrt{J_L^*(\xi_t)}
+c_{\xi,2}\sum_{k=0}^{n-1}\lVert \xi_{t+k}-\xi_t\rVert_2^2\\\nonumber
&+c_{\xi,2}Nc_{\theta,1}^2\theta^2+n\bigg(c_{\xi,6}\theta^2+c_{\xi,7}(\theta^{2-\beta_{\alpha}}+\theta^{\beta_{\sigma}})\sqrt{J_L^*(\xi_t)}\bigg)\Bigg).
\end{align}
Let now 
\begin{align}\label{eq:thm_NL_stab_proof_S_V_ROA_def}
\bar{S}=\theta^4,\>\>V_{\mathrm{ROA}}=\theta^{2+\eta}
\end{align}
for some $0<\eta<2(\beta_{\alpha}-1)$ and suppose $\bar{\theta}<1$.
This implies $\theta^{\eta_1}<\theta^{\eta_2}$ if $\eta_1>\eta_2$, which will be used throughout the proof.
Using Assumption~\ref{ass:NL_sys}, i.e.,~\cite[Assumptions 4 and 5]{berberich2021linearpart1}, $J_{\mathrm{eq},\rmlin}^*(x_t)$ is uniformly bounded by $\bar{S}$, i.e.,
\begin{align}\label{eq:thm_NL_stab_proof_J_eq_bound}
J_{\mathrm{eq},\rmlin}^*(x_t)\leq c_{\rmJ,S}\bar{S}=c_{\rmJ,S}\theta^4
\end{align}
with $c_{\rmJ,S}>0$.
Hence, $\check{J}_L^*(\bar{\xi}_t,\mathcal{D}_t)\leq V_{\mathrm{ROA}}+J_{\mathrm{eq},\rmlin}^*(x_t)$ implies
\begin{align}\label{eq:thm_NL_stab_proof_J_nom_theta_bound}
\check{J}_L^*(\xi_t',\mathcal{D}_t)\leq \check{c}_{\rmJ,\theta}\theta^{2+\eta}
\end{align}
for some $\check{c}_{\rmJ,\theta}>0$.
We now apply Proposition~\ref{prop:continuity}, where the noise / disturbance is bounded as in~\eqref{eq:prop_continuity_noise_bound} by $\bar{\varepsilon}=c_{\theta}\theta^2$ for some $c_{\theta}>0$, compare~\eqref{eq:thm_NL_stab_Delta_bound} for the ``noise'' entering the output Hankel matrix and~\eqref{eq:thm_NL_stab_proof_xi_bar_bound} for the ``disturbance'' in the initial conditions.
Inequalities~\eqref{eq:thm_NL_stab_proof_J_eq_bound} and~\eqref{eq:thm_NL_stab_proof_J_nom_theta_bound} together with Proposition~\ref{prop:continuity} imply
\begin{align}\label{eq:thm_NL_stab_proof_J_theta_bound}
J_L^*(\xi_t)&\leq \left(1+c_{\rmJ,\rma}(c_{\theta}\theta^2)^{\beta_{\sigma}}\right)
\check{c}_{\rmJ,\theta}\theta^{2+\eta}\\\nonumber
&\quad+c_{\rmJ,\rmb}(c_{\theta}\theta^2)^{2-\beta_{\sigma}}\left(1+\lVert H_{ux,t}^\dagger\rVert_2^2(c_{\theta}\theta^2)^{\beta_{\alpha}}\right)
\\\nonumber
&\leq c_{\rmJ,\theta}\theta^{2+\eta}
\end{align}
for some $c_{\rmJ,\theta}>0$, where we use $\lVert H_{ux,t+n}^\dagger\rVert_2^2\leq \frac{\bar{c}_{\rmH}^2}{\theta^2}$ by assumption, $\bar{\theta}<1$ as well as
\begin{align*}
0<\eta<2(\beta_{\alpha}-1)<2(1-2\beta_{\sigma})<2(1-\beta_{\sigma}).
\end{align*}
Plugging this into~\eqref{eq:thm_NL_stab_proof_bound_xi7}, we obtain
\begin{align}\label{eq:thm_NL_stab_proof_bound_xi8}
\lVert\xi_{t+n}-\xi_{t+j}\rVert_2\leq\beta_1(\theta)+c_{\xi,1}c_{\xi,2}\sum_{k=0}^{n-1}\lVert \xi_{t+k}-\xi_t\rVert_2^2
\end{align}
for a suitably defined $\beta_1\in\mathcal{K}_{\infty}$ containing only terms with order strictly larger than $1$.
Each of the terms $\lVert\xi_{t+k}-\xi_t\rVert_2$, $k\in\mathbb{I}_{[0,n-1]}$ can be bounded analogously to $\lVert\xi_{t+n}-\xi_{t+j}\rVert_2$ in~\eqref{eq:thm_NL_stab_proof_bound_xi8}, using the same bounds as above leading to~\eqref{eq:thm_NL_stab_proof_bound_xi8}.
To be precise, following the same steps as above, there exist $\hat{\beta}\in\mathcal{K}_{\infty}$, $\hat{c}_{\xi}>0$ such that, for all $k\in\mathbb{I}_{[0,n-1]}$,
\begin{align*}
\lVert\xi_{t+k}-\xi_t\rVert_2^2\leq\hat{\beta}(\theta)+\hat{c}_{\xi}\sum_{s=0}^{k-1}\lVert\xi_{t+s}-\xi_t\rVert_2^2,
\end{align*}
where $\hat{\beta}$ contains only terms with order strictly larger than $1$.
Applying this bound recursively $n-1$ times and plugging the result into~\eqref{eq:thm_NL_stab_proof_bound_xi8}, we obtain
\begin{align}\label{eq:thm_NL_stab_proof_bound_xin_K}
\lVert\xi_{t+n}-\xi_{t+j}\rVert_2
\leq \beta_2(\theta)
\end{align}
for $j\in\mathbb{I}_{[0,n-1]}$ with $\beta_2\in\mathcal{K}_{\infty}$ containing only terms with order strictly larger than $1$.
Hence, if $\bar{\theta}$ is sufficiently small such that $\beta_2(\theta)\leq c_{\xi,0}\theta$, then~\eqref{eq:thm_NL_stability_recursive_bound} holds for $k=t$.
To conclude, we have shown that~\eqref{eq:thm_NL_stability_recursive_bound} holds recursively, assuming that $V(\xi_t',\mathcal{D}_t)\leq V_{\mathrm{ROA}}$.
Thus, using~\eqref{eq:thm_NL_stability_recursive_bound} for $k=t-N+n,t-N+2n,\dots,t$, we infer (cf.~\eqref{eq:thm_NL_stability_proof_xi_bound}, \eqref{eq:thm_NL_stab_proof_xi_bar_bound})
\begin{align}\label{eq:thm_NL_stab_proof_data_bound}
\lVert \xi_{t+n}-\xi_k\rVert_2&\leq c_{\theta,1}\theta,\quad k\in\mathbb{I}_{[t+n-N,t+n-1]},\\
\label{eq:thm_NL_stab_proof_disturbance_bound}
\lVert\xi_{t+n}'-\xi_{t+n}\rVert_2&\leq c_{\theta,10}\theta^2.
\end{align}
With $c_{\theta,2}$ as in~\eqref{eq:thm_NL_stab_Delta_bound}, we have for $k\in\mathbb{I}_{[-N,-1]}$
\begin{align}\label{eq:thm_NL_stab_Delta_bound_tn}
\lVert y_{t+n+k}-y_k'(t+n)\rVert_2&\stackrel{\eqref{eq:lem_pred_error_NL}}{\leq} c_{\Delta}\sum_{i=t+n-N}^{t+n-1}\lVert x_{t+n}-x_i\rVert_2^2\leq c_{\theta,2}\theta^2,
\end{align}
i.e.,~\eqref{eq:thm_NL_stab_Delta_bound} also holds recursively with $t$ replaced by $t+n$.\\
\textbf{(iii) Invariance and Lyapunov function decay}\\
\textbf{(iii).a Application of model-based results from~\cite{berberich2021linearpart1}}\\
%In the following, we write $\tilde{x}_{[t,t+n]}$ and $\tilde{\xi}_{[t,t+n]}$ for the (extended) state trajectory resulting from an application of the optimal input $\{\check{u}_k^*(t)\}_{k=0}^{n-1}$ of the nominal data-driven MPC problem with optimal cost $\check{J}_L^*(\bar{\xi}_t,\mathcal{D}_t)$ to the nonlinear system~\eqref{eq:sys_NL} at initial state $x_t$.
%Further, we write $\tilde{\mathcal{D}}_{t+n}$ for the data trajectory corresponding to the dynamics linearized at $\tilde{x}_{t+n}$ with initial condition $x_{t-N+n}$ and input $\begin{bmatrix}u_{[t-N+n,t-1]}^\top&\check{u}_{[0,n-1]}^*(t)^\top\end{bmatrix}^\top$.
%Finally, we define $\tilde{\xi}_{t+n}'$ as an extended state vector corresponding to $\tilde{x}_{t+n}$ in the dynamics linearized at $\tilde{x}_{t+n}$, which is defined analogously to $\bar{\xi}_t$ in Part (i) of the proof.
Recall that the prediction error due to the inexact model in Problem~\eqref{eq:DD_MPC_NL} at time $t$ (compared to the nominal MPC problem with cost $\check{J}_L^*(\xi_t',\mathcal{D}_t)$) is induced by output measurement noise and perturbed initial conditions with bound $c_{\theta}\theta^2$ for some $c_{\theta}>0$ (compare~\eqref{eq:thm_NL_stab_Delta_bound} and~\eqref{eq:thm_NL_stab_proof_xi_bar_bound}).
According to Proposition~\ref{prop:continuity}, this noise translates into an input disturbance for the resulting closed loop with bound $\beta_{\rmu}(c_{\theta}\theta^2)$, i.e.,
\begin{align}\label{eq:thm_NL_stab_part_iii_u_nom_bound}
\lVert\check{u}_{[0,n-1]}^*(t)-u_{[t,t+n-1]}\rVert_2&\leq \beta_{\rmu}(c_{\theta}\theta^2),
\end{align}
where $\check{u}^*(t)$ is the nominal optimal input corresponding to the optimal cost $\check{J}_L^*(\xi_t',\mathcal{D}_t)$.
Since the nominal MPC problem~\eqref{eq:DD_MPC_NL_nominal} with $\tilde{\sigma}=0$ contains an exact model of the linearization, we can apply the main technical result\footnote{The result can be applied despite the fact that the cost of Problem~\eqref{eq:DD_MPC_NL_nominal} depends on the output, i.e., is in general only positive semidefinite in the state, since we perform an $n$-step analysis and the cost is positive definite over $n$ steps.}
in our companion paper~\cite[Proposition 2]{berberich2021linearpart1} to arrive at
\begin{align}\label{eq:thm_NL_stab_proof_Lyapunov_decay1}
&V(\xi_{t+n}',\mathcal{D}_{t+n})\leq c_{\rmV,1}V(\xi_t',\mathcal{D}_t)\\\nonumber
&+\bar{\lambda}_{\alpha}(c_{\theta}\theta^2)^{\beta_{\alpha}}\lVert\tilde{\alpha}(t+n)-\alpha^{\rms\rmr}_{\rmlin}(\calD_{t+n})\rVert_2^2+c_{\rmd,1}\beta_{\rmu}(c_{\theta}\theta^2)^2
\end{align}
for some $0<c_{\rmV,1}<1$, $c_{\rmd,1}>0$.
Here, $\tilde{\alpha}(t+n)$ is a later specified candidate solution corresponding to the optimal control problem with optimal cost $\check{J}_L^*(\xi_{t+n}',\mathcal{D}_{t+n})$.
The additional term depending on $\tilde{\alpha}(t+n)$ is due to the regularization of $\alpha$ in the cost, which is not present in the model-based MPC scheme in~\cite{berberich2021linearpart1}.
Further, the term $c_{\rmd,1}\beta_{\rmu}(c_{\theta}\theta^2)^2$ is due to the fact that $x_{t+n}$ results from applying the optimal input $u_{[t,t+n-1]}=\bar{u}_{[0,n-1]}^*(t)$ of Problem~\eqref{eq:DD_MPC_NL} to the state $x_t$, whereas~\cite[Proposition 2]{berberich2021linearpart1} considers the closed loop under the nominal MPC input $\check{u}_{[0,n-1]}^*(t)$.
To be more precise, with minor adaptations of the proof of~\cite[Proposition 2]{berberich2021linearpart1}, it can be shown that, if the closed-loop state at time $t+n$ results from applying an input which differs from $\check{u}^*(t)$ by no more than $\beta_{\rmu}(c_{\theta}\theta^2)$ (compare~\eqref{eq:thm_NL_stab_part_iii_u_nom_bound}), then the Lyapunov function decay shown in~\cite{berberich2021linearpart1} remains true if the squared disturbance bound $c_{\rmd,1}\beta_{\rmu}(c_{\theta}\theta^2)^2$ is added on the right-hand side (i.e.,~\eqref{eq:thm_NL_stab_proof_Lyapunov_decay1} holds).
This is possible since the main proof idea of~\cite[Proposition 2]{berberich2021linearpart1} does not rely on the exact state value at time $t+n$, but rather on the fact that it remains close to $x_t$.\\
\textbf{(iii).b Bound on $\lVert\tilde{\alpha}(t+n)-\alpha^{\rms\rmr}_{\rmlin}(\calD_{t+n})\rVert_2^2$}\\
We bound this term using the candidate solution of the model-based MPC in~\cite{berberich2021linearpart1}.
We write $\tilde{u}(t+n)$ for a candidate input used in the proof of~\cite[Proposition 2]{berberich2021linearpart1}, which corresponds to a feasible candidate solution to Problem~\eqref{eq:DD_MPC_NL_nominal} with $\tilde{\sigma}=0$ for  $\tilde{\alpha}(t+n)=H_{ux,t+n}^\dagger\begin{bmatrix}\tilde{u}(t+n)^\top&x_t^\top&1\end{bmatrix}^\top$.
Thus, we infer
\begin{align}\label{eq:thm_NL_stab_part_iii_alpha_bound1}
&\lVert\tilde{\alpha}(t+n)-\alpha^{\rms\rmr}_{\rmlin}(\calD_{t+n})\rVert_2^2\\\nonumber
\leq&\lVert H_{ux,t+n}^\dagger\rVert_2^2\Big(\lVert\tilde{u}(t+n)-\bbone_{L+n+1}\otimes u^{\rms\rmr}_{\rmlin}(x_{t+n})\rVert_2^2\\\nonumber
&+\lVert x_t-x^{\rms\rmr}_{\rmlin}(x_{t+n})\rVert_2^2\Big).
\end{align}
The analysis in~\cite[Proposition 2]{berberich2021linearpart1} implies that both candidate solutions used in~\cite{berberich2021linearpart1} satisfy
\begin{align}\label{eq:thm_NL_stab_part_iii_alpha_bound2}
\lVert\tilde{u}(t+n)-\bbone_{L+n+1}\otimes u^{\rms\rmr}_{\rmlin}(x_{t+n})\rVert_2^2
\leq c_{\rmV,2}V(\xi_t',\mathcal{D}_t)
\end{align}
for some $c_{\rmV,2}>0$.
Further, there exist $c_{\rmV,3},c_{\rmV,4}>0$ such that
\begin{align}\label{eq:thm_NL_stab_part_iii_alpha_bound3}
&\lVert x_t-x^{\rms\rmr}_{\rmlin}(x_{t+n})\rVert_2^2\\\nonumber
\stackrel{\eqref{eq:ab_ineq2}}{\leq}&2\lVert x_t-x_{t+n}\rVert_2^2+2\lVert x_{t+n}-x^{\rms\rmr}_{\rmlin}(x_{t+n})\rVert_2^2\\\nonumber
\stackrel{\eqref{eq:NL_x_xi_Lipschitz},\eqref{eq:thm_NL_stab_proof_data_bound},\eqref{eq:thm_NL_stab_proof_lower_upper}}{\leq}
&c_{\rmV,3}\theta^2+c_{\rmV,4}V(\xi_{t+n}',\mathcal{D}_{t+n}).
\end{align}
Assumption~\ref{ass:closed_loop_pe} and $c_{\rmH}=\frac{\bar{c}_{\rmH}}{\theta}$ imply $\lVert H_{ux,t+n}^\dagger\rVert_2^2\leq \frac{\bar{c}_{\rmH}^2}{\theta^2}$.
Plugging~\eqref{eq:thm_NL_stab_part_iii_alpha_bound1}--\eqref{eq:thm_NL_stab_part_iii_alpha_bound3} into~\eqref{eq:thm_NL_stab_proof_Lyapunov_decay1}, we thus infer
\begin{align}\label{eq:thm_NL_stab_proof_Lyapunov_decayx}
&V(\xi_{t+n}',\mathcal{D}_{t+n})\\\nonumber
\leq&(c_{\rmV,1}+c_{\rmV,5}\theta^{2\beta_{\alpha}-2})V(\xi_t',\mathcal{D}_t)+c_{\rmd,1}\beta_{\rmu}(c_{\theta}\theta^2)^2\\\nonumber
&+c_{\rmV,6}\theta^{2\beta_{\alpha}}+c_{\rmV,7}\theta^{2\beta_{\alpha}-2}V(\xi_{t+n}',\mathcal{D}_{t+n})
\end{align}
for suitably defined $c_{V,i}>0$, $i\in\mathbb{I}_{[5,7]}$.
Given that $\beta_{\alpha}>1$, $c_{\rmV,1}<1$, we can choose $\bar{\theta}$ sufficiently small such that
\begin{align*}
c_{\rmV,7}\theta^{2\beta_{\alpha}-2}<1,\>\>
\frac{c_{\rmV,1}+c_{\rmV,5}\theta^{2\beta_{\alpha}-2}}{1-c_{\rmV,7}\theta^{2\beta_{\alpha}-2}}<1.
\end{align*}
Multiplication of both sides of~\eqref{eq:thm_NL_stab_proof_Lyapunov_decayx} by $\frac{1}{1-c_{\rmV,7}\theta^{2\beta_{\alpha}-2}}$ yields
\begin{align}\label{eq:thm_NL_stab_proof_Lyapunov_decay2}
V(\xi_{t+n}',\mathcal{D}_{t+n})&\leq c_{\rmV,8}V(\xi_t',\mathcal{D}_t)\\\nonumber
&\quad+c_{\rmV,9}(\beta_{\rmu}(c_{\theta}\theta^2)^2+\theta^{2\beta_{\alpha}})
\end{align}
for suitably defined $0<c_{\rmV,8}<1$, $c_{\rmV,9}>0$.\\
\textbf{(iii).c Practical stability}\\
Plugging~\eqref{eq:thm_NL_stab_proof_J_theta_bound} and $\bar{\varepsilon}=c_{\theta}\theta^2$ into~\eqref{eq:prop_continuity_proof_strong_convex}--\eqref{eq:prop_continuity_proof_sigma_tilde_bound}, straightforward algebraic calculations reveal that~\eqref{eq:prop_continuity} also holds with 
\begin{align*}
\beta_{\rmu}(c_{\theta}\theta^2)\leq c_{\rmV,10}\left(\theta^{2-\frac{\beta_{\alpha}+\beta_{\sigma}}{2}+\frac{\eta}{2}}+\theta^2+\theta^{1+\beta_{\sigma}+\frac{\eta}{2}}\right)\eqqcolon\beta_{\rmV,1}(\theta)
\end{align*}
for some $c_{\rmV,10}>0$.
Using $\beta_{\alpha}+2\beta_{\sigma}<2$ and $\eta<2(\beta_{\alpha}-1)$, it is straightforward to verify that the smallest exponent appearing in $(\beta_{\rmV,1}(\theta))^2+\theta^{2\beta_{\alpha}}$ is strictly larger than $2+\eta$.
This implies that, if $\bar{\theta}$ is sufficiently small, then~\eqref{eq:thm_NL_stab_proof_Lyapunov_decay2} yields $V(\xi_{t+n}',\mathcal{D}_{t+n})\leq \theta^{2+\eta}=V_{\mathrm{ROA}}$.
Hence, all bounds in this proof hold recursively for $t=N+ni$, $i\in\mathbb{I}_{\geq0}$.
In particular, the nominal MPC problem with cost $\check{J}_L^*(\xi_t,\mathcal{D}_t)$ and, using Proposition~\ref{prop:continuity}, Problem~\eqref{eq:DD_MPC_NL} are recursively feasible.
Finally, a recursive application of~\eqref{eq:thm_NL_stab_proof_Lyapunov_decay2} yields 
\begin{align}
V(\xi_t',\mathcal{D}_{t})\leq c_{\rmV,8}^iV(\xi_N',\mathcal{D}_N)+\beta_{\rmV,3}(\theta)
\end{align}
for some $\beta_{\rmV,3}\in\mathcal{K}_{\infty}$ and any $t=N+ni$, $i\in\mathbb{I}_{\geq0}$.
Exploiting the lower and upper bounds in~\eqref{eq:thm_NL_stab_proof_lower_upper}, we obtain
\begin{align*}
\lVert\xi_t'-\xi^{\rms\rmr}_{\rmlin}(x_{t})\rVert_2^2\leq\frac{c_\rmu}{c_\rml}c_{\rmV,8}^i\lVert\xi_N'-\xi^{\rms\rmr}_{\rmlin}(x_N)\rVert_2^2+\frac{1}{c_\rml}\beta_{\rmV,3}(\theta).
\end{align*}
Using Assumption~\ref{ass:NL_manifold_convex}, i.e.,~\eqref{eq:ass_NL_manifold_convex}, as well as~\eqref{eq:thm_NL_stab_proof_xi_bar_bound}, we obtain~\eqref{eq:thm_NL_stability_decay} with $c_{\rmV}=c_{\rmV,8}$ and some $C>0$, $\beta_{\theta}\in\mathcal{K}_{\infty}$.
\end{proof}

\begin{IEEEbiography}[{\includegraphics[width=1in,height=1.25in,clip,keepaspectratio]{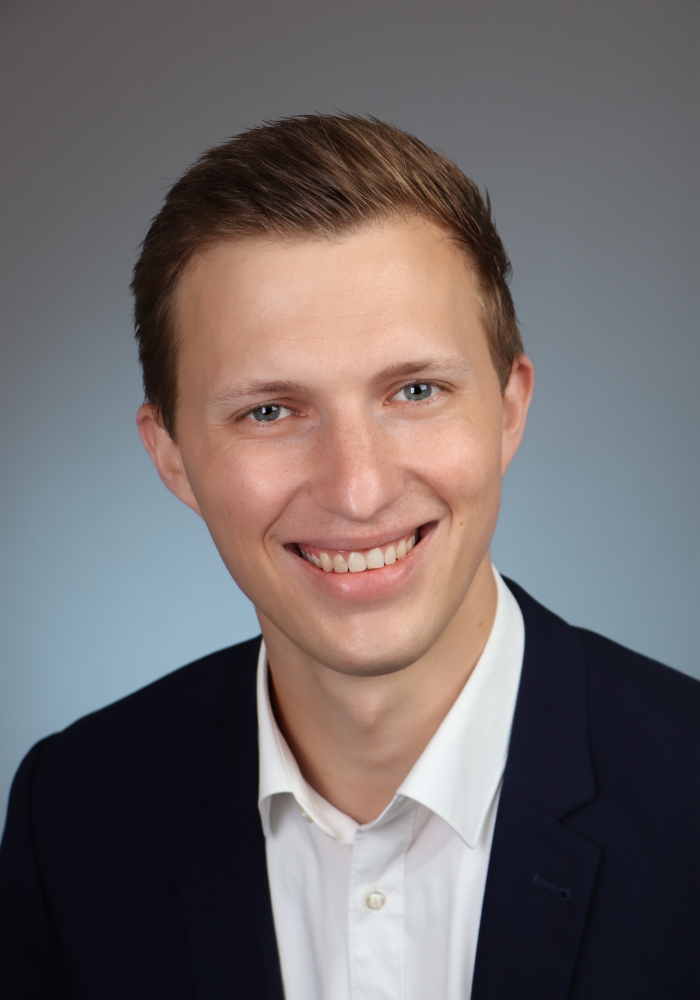}}]{Julian Berberich}
received the Master’s degree in Engineering Cybernetics from the University of Stuttgart, Germany, in 2018. Since 2018, he has been a Ph.D. student at the Institute for Systems Theory and Automatic Control under supervision of Prof. Frank Allg\"ower and a member of the International Max-Planck Research School (IMPRS) at the University of Stuttgart. He has received the Outstanding Student Paper Award at the 59th Conference on Decision and Control in 2020. His research interests are in the area of data-driven analysis and control.
\end{IEEEbiography}

\begin{IEEEbiography}[{\includegraphics[width=1in,height=1.25in,clip,keepaspectratio]{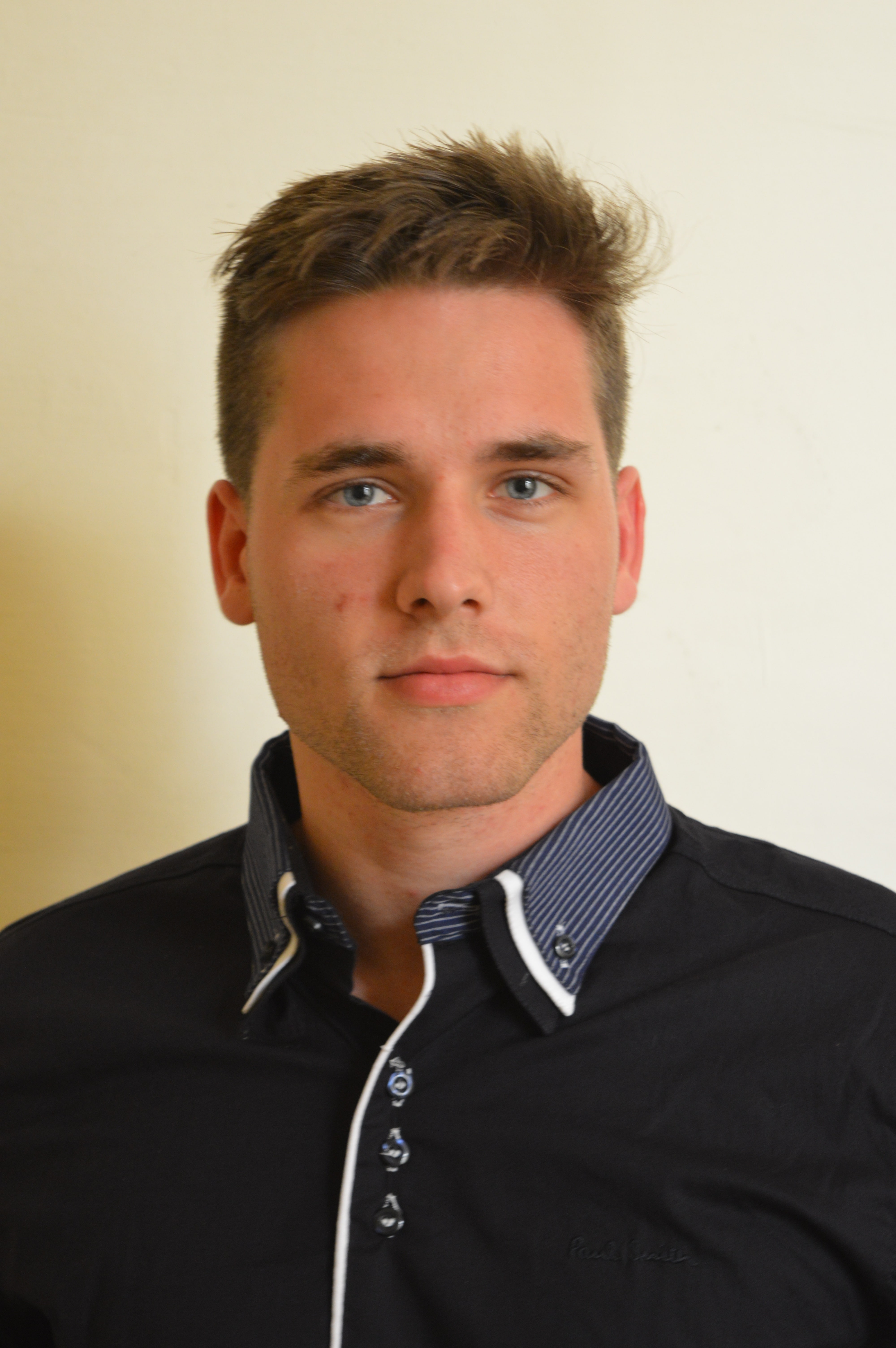}}]{Johannes K\"ohler}
received his Master degree in Engineering Cybernetics from the University of Stuttgart, Germany, in 2017.
In 2021, he obtained a Ph.D. in Mechanical Engineering, also from the University of Stuttgart, Germany. 
Since then, he is a postdoctoral researcher at the \textit{Institute for Dynamic Systems and Control} at ETH Zürich. 
His research interests are in the area of model predictive control and the control of nonlinear uncertain systems.
 \end{IEEEbiography}

\begin{IEEEbiography}[{\includegraphics[width=1in,height=1.25in,clip,keepaspectratio]{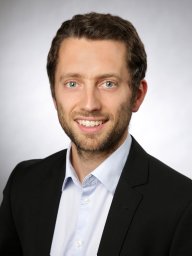}}]{Matthias A. M\"uller}
received a Diploma degree in Engineering Cybernetics from the University of Stuttgart, Germany, and an M.S. in Electrical and Computer Engineering from the University of Illinois at Urbana-Champaign, US, both in 2009.
In 2014, he obtained a Ph.D. in Mechanical Engineering, also from the University of Stuttgart, Germany, for which he received the 2015 European Ph.D. award on control for complex and heterogeneous systems. Since 2019, he is director of the Institute of Automatic Control and full professor at the Leibniz University Hannover, Germany. 
He obtained an ERC Starting Grant in 2020 and is recipient of the inaugural Brockett-Willems Outstanding Paper Award for the best paper published in Systems \& Control Letters in the period 2014-2018. His research interests include nonlinear control and estimation, model predictive control, and data-/learning-based control, with application in different fields including biomedical engineering.
\end{IEEEbiography}

\begin{IEEEbiography}[{\includegraphics[width=1in,height=1.25in,clip,keepaspectratio]{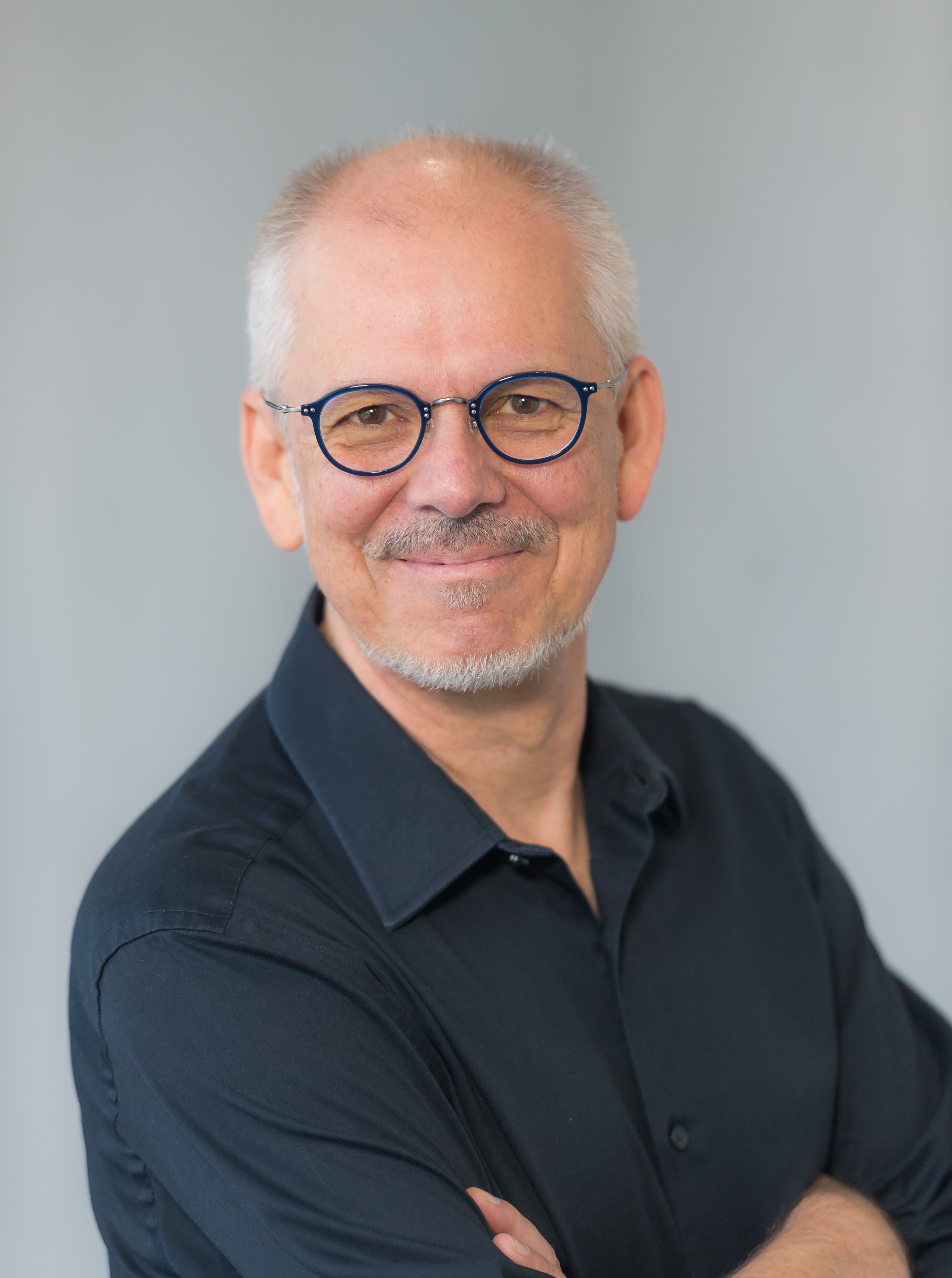}}]{Frank Allg\"ower}
is professor of mechanical engineering at the University of Stuttgart, Germany, and Director of the Institute for Systems Theory and Automatic Control (IST) there.\\ 
Frank is active in serving the community in several roles: Among others he has been President of the International Federation of Automatic Control (IFAC) for the years 2017-2020, Vice-president for Technical Activities of the IEEE Control Systems Society for 2013/14, and Editor of the journal Automatica from 2001 until 2015. From 2012 until 2020 Frank served in addition as Vice-president for the German Research Foundation (DFG), which is Germany’s most important research funding organization. \\
His research interests include predictive control, data-based control, networked control, cooperative control, and nonlinear control with application to a wide range of fields including systems biology.
\end{IEEEbiography}

\section{Proof of Theorem~\ref{thm:willems_affine}}\label{sec:app_willems_affine_proof}
\begin{proof}
\textbf{Proof of ``if'':}\\
%Let us define
%%
%\begin{align}\nonumber
%&\Phi_L\coloneqq\begin{bmatrix}C\\CA\\CA^2\\\vdots\\CA^{L-1}\end{bmatrix},
%\Gamma_{e,L}\coloneqq\begin{bmatrix}0&0&\dots&\dots&0\\
%C&0&0&\ddots&\vdots\\
%CA&C&0&\ddots&\vdots\\
%\vdots&\ddots&\ddots&\ddots&\vdots\\
%CA^{L-2}&\dots&\dots&C&0
%\end{bmatrix}
%,\\\label{eq:affine_matrices}
%&\Gamma_{u,L}\coloneqq\begin{bmatrix}D&0&\dots&\dots&0\\
%CB&D&0&\ddots&\vdots\\
%CAB&CB&D&\ddots&\vdots\\
%\vdots&\ddots&\ddots&\ddots&\vdots\\
%CA^{L-2}B&\dots&\dots&CB&D
%\end{bmatrix},\\\nonumber
%&e_L\coloneqq\mathbbm{1}_{L}\otimes e,\>r_L\coloneqq\mathbbm{1}_{L}\otimes r.
%\end{align}
%%
It clearly holds that
\begin{align}\label{eq:thm_hankel_proof_evolution}
y_{[i,i+L-1]}^\rmd=\Phi_L x_i^\rmd+\Gamma_{u,L} u_{[i,i+L-1]}^\rmd+\Gamma_{e,L} e_L+r_L
\end{align}
for suitably defined matrices $\Phi_L$, $\Gamma_{e,L}$, and $\Gamma_{u,L}$ depending on $A$, $B$, $C$, $D$, and for $e_L\coloneqq\mathbbm{1}_{L}\otimes e$, $r_L\coloneqq\mathbbm{1}_{L}\otimes r$.
Hence,
\begin{align}\label{eq:thm_hankel_proof1}
&y_{[0,L-1]}\stackrel{\eqref{eq:thm_willems_affine}}{=}\sum_{i=0}^{N-L}y^\rmd_{[i,i+L-1]}\alpha_i\\\nonumber
&\stackrel{\eqref{eq:thm_hankel_proof_evolution}}{=}\sum_{i=0}^{N-L}\alpha_i\left(\Phi_L x_i^\rmd+\Gamma_{u,L}u_{[i,i+L-1]}^\rmd+\Gamma_{e,L} e_L+r_L\right)\\\nonumber
%&\stackrel{\eqref{eq:thm_willems_affine}}{=}\Phi_L\sum_{i=0}^{N-L}\alpha_ix_i^\rmd+\Gamma_{u,L}\sum_{i=0}^{N-L}\alpha_iu_{[i,i+L-1]}^\rmd+\Gamma_{e,L} e_L+r_L\\\nonumber
&\stackrel{\eqref{eq:thm_willems_affine}}{=}\Phi_L\sum_{i=0}^{N-L}\alpha_ix_i^\rmd+\Gamma_{u,L}u_{[0,L-1]}+\Gamma_{e,L} e_L+r_L.
\end{align}
This implies that $\{u_k,y_k\}_{k=0}^{L-1}$ is a trajectory of~\eqref{eq:sys_affine} with initial condition $x_0=\sum_{i=0}^{N-L}\alpha_ix_i^d$.\\
\textbf{Proof of ``only if'':}\\
Let $\{u_k,y_k\}_{k=0}^{L-1}$ be a trajectory of~\eqref{eq:sys_affine} with initial condition $x_0$.
Using~\eqref{eq:def_pe}, there exists $\alpha\in\mathbb{R}^{N-L+1}$ such that
\begin{align}\label{eq:thm_hankel_proof2}
\begin{bmatrix}H_L(u^\rmd)\\H_1(x^\rmd_{[0,N-L]})\\\mathbbm{1}_{N-L+1}^\top\end{bmatrix}
\alpha=\begin{bmatrix}u_{[0,L-1]}\\x_0\\1\end{bmatrix}.
\end{align}
Note that the last row implies $\sum_{i=0}^{N-L}\alpha_i=1$.
Moreover,
\begin{align*}
&y_{[0,L-1]}=\Phi_L x_0+\Gamma_{u,L} u_{[0,L-1]}+\Gamma_{e,L}e_L+r_L\\
&\stackrel{\eqref{eq:thm_hankel_proof2}}{=}\sum_{i=0}^{N-L}\alpha_i\left(\Phi_L x_i^\rmd+\Gamma_{u,L}u_{[i,i+L-1]}^\rmd+\Gamma_{e,L} e_L+r_L\right)\\
&\stackrel{\eqref{eq:thm_hankel_proof_evolution}}{=}\sum_{i=0}^{N-L}\alpha_i y^\rmd_{[i,i+L-1]}=H_L(y^\rmd)\alpha.\qedhere
\end{align*}
\end{proof}

\section{Robust data-driven tracking MPC for affine systems}\label{sec:app_affine_MPC}
In this section, we present a novel data-driven tracking MPC scheme to control unknown affine systems using noisy input-output data.
The presented results combine and extend the linear data-driven MPC results in~\cite{berberich2021guarantees} and~\cite{berberich2020tracking} regarding robustness to noise and nominal setpoint tracking, respectively.
After stating the problem setting in Section~\ref{subsec:affine_MPC_setting}, we present the MPC scheme in Section~\ref{subsec:affine_MPC_scheme}.
Section~\ref{subsec:affine_stability} contains our closed-loop stability results and the proof is provided in Section~\ref{sec:app_B}.

\subsection{Problem setting}\label{subsec:affine_MPC_setting}
Let us consider the affine system~\eqref{eq:sys_affine}.
Similar to other works on data-driven MPC with closed-loop guarantees (see, e.g.,~\cite{berberich2021guarantees}) we assume that~\eqref{eq:sys_affine} is controllable and observable.%, and we additionally assume that $I-A$ is invertible.
\begin{assumption}\label{ass:affine_ctrb}
(Controllability and observability)
The pair $(A,B)$ is controllable and the pair $(A,C)$ is observable.
\end{assumption}
%
%\begin{assumption}\label{ass:affine_non_sing}
%The matrix $I-A$ is invertible.
%\end{assumption}
%%
%Assumption~\ref{ass:affine_non_sing} is only used for a technical argument and can possibly be relaxed.
Since we only have access to input-output data of~\eqref{eq:sys_affine}, we define an input-output equilibrium as follows (cf.~\cite{berberich2021guarantees,berberich2020tracking}).
\begin{definition}\label{def:IO_equilibrium}
We say that an input-output pair $(u^{\rms},y^{\rms})\in\mathbb{R}^{m+p}$ is an equilibrium of~\eqref{eq:sys_affine}, if the sequence $\{\bar{u}_k,\bar{y}_k\}_{k=0}^{n}$ with $(\bar{u}_k,\bar{y}_k)=(u^{\rms},y^{\rms})$ for $k\in\mathbb{I}_{[0,n]}$ is a trajectory of~\eqref{eq:sys_affine}.
\end{definition}
In this section, the goal is to track an output setpoint\footnote{Input setpoints can be included via an augmented output $y'=\begin{bmatrix}y\\u\end{bmatrix}$.} $y^\rmr\in\mathbb{R}^{p}$, which need not be an equilibrium of~\eqref{eq:sys_affine}, while satisfying pointwise-in-time input constraints $u_t\in\mathbb{U}\subseteq\mathbb{R}^m$, $t\in\mathbb{I}_{\geq0}$, with a compact, convex polytope $\mathbb{U}$.
We do not consider output constraints to avoid the additional challenges in the required robust constraint tightening, compare~\cite{berberich2020constraints}.
The proposed MPC scheme includes an artificial setpoint in the online optimization, thereby guaranteeing closed-loop stability of the optimal reachable equilibrium.
%For a given matrix $S\succ0$, the optimal reachable equilibrium $(u^{\rms\rmr},y^{\rms\rmr})$ is defined as the equilibrium $(u^{\rms},y^{\rms})$ minimizing the cost $\lVert y^{\rms}-y^\rmr\rVert_S^2$.
Due to a local controllability argument required for our theoretical results, we consider only equilibria whose input component lies in the interior of the constraints, i.e., in some convex polytope $\mathbb{U}^{\rms}\subseteq\text{int}(\mathbb{U})$.
Given a matrix $S\succ0$ and a data trajectory $\{u_k^\rmd,y_k^\rmd\}_{k=0}^{N-1}$ of~\eqref{eq:sys_affine} with persistently exciting input and state according to Definition~\ref{def:pe}, we define the optimal reachable equilibrium $(u^{\rms\rmr},y^{\rms\rmr})$ as the minimizer of
\begin{align}\label{eq:opt_reach_equil_affine}
J_{\mathrm{eq}}^*\coloneqq&\min_{u^{\rms},y^{\rms},\alpha^{\rms}}\lVert y^\rms-y^\rmr\rVert_S^2\\\nonumber
\text{s.t.}\>\>&\begin{bmatrix}H_{L+n+1}(u^\rmd)\\H_{L+n+1}(y^\rmd)\\\mathbbm{1}_{N-L-n}^\top\end{bmatrix}\alpha^{\rms}=\begin{bmatrix}\bbone_{L+n+1}\otimes u^{\rms}\\ \bbone_{L+n+1}\otimes y^{\rms}\\1\end{bmatrix},\>u^{\rms}\in\mathbb{U}^{\rms}
\end{align}
with some $L\in\mathbb{I}_{\geq0}$.
We note that $y^{\rms\rmr}$ is unique due to $S\succ0$ and $u^{\rms\rmr}$ is unique due to Assumption~\ref{ass:affine_unique_steady_state} below.
On the other hand, the corresponding solution $\alpha^\rms$ is in general not unique and we denote the solution with minimum $2$-norm by $\alpha^{\rms\rmr}$.
\begin{assumption}\label{ass:affine_unique_steady_state}
(Unique steady-state)
The matrix $\begin{bmatrix}A-I&B\\C&D\end{bmatrix}$ has full column rank.
\end{assumption}
Assumption~\ref{ass:affine_unique_steady_state} is a standard condition in tracking MPC (cf.~\cite[Lemma 1.8]{rawlings2020model},~\cite[Remark 1]{limon2018nonlinear}) and it implies the existence of an affine (hence Lipschitz continuous) map $\hat{g}$ %:\mathcal{Z}^\rms_{\rmy}\to\mathcal{Z}^\rms$ 
which uniquely maps output equilibria $y^\rms$ to their corresponding input-state components $(x^\rms,u^\rms)$, i.e., $\hat{g}(y^\rms)=(x^\rms,u^\rms)$ and
\begin{align}\label{eq:ass_affine_unique_steady_state}
\left\lVert \begin{bmatrix}x_1^\rms\\u_1^\rms\end{bmatrix}
-\begin{bmatrix}x_2^\rms\\u_2^\rms\end{bmatrix}\right\rVert_2^2\leq c_g\lVert y_1^\rms-y_2^\rms\rVert_2^2
\end{align}
with some $c_g>0$ for any two input-output equilibria $(u_1^\rms,y_1^\rms)$, $(u_2^\rms,y_2^\rms)$ with corresponding steady-states $x_1^\rms$, $x_2^\rms$.
Since $S\succ0$, the cost of~\eqref{eq:opt_reach_equil_affine} is strongly convex in $y^\rms$ and for any $y^\rms$ satisfying the constraints of~\eqref{eq:opt_reach_equil_affine} for some $u^\rms$, $\alpha^\rms$, we have
\begin{align}\label{eq:strong_convexity_affine}
&\lVert y^\rms-y^\rmr\rVert_S^2-J_{\mathrm{eq}}^*\geq\lVert y^{\rms}-y^{\rms\rmr}\rVert_S^2,
\end{align}
compare~\cite[Inequality (11)]{koehler2020nonlinear}.
Throughout this section and in contrast to Section~\ref{sec:willems_affine}, we assume that a \emph{noisy} input-output trajectory $\{u_k^\rmd,\tilde{y}_k^\rmd\}_{k=0}^{N-1}$ of~\eqref{eq:sys_affine} is available, where the output $\tilde{y}_k^\rmd=y_k^\rmd+\varepsilon_k^\rmd$, $k\in\mathbb{I}_{[0,N-1]}$, is perturbed by additive measurement noise $\{\varepsilon_k^\rmd\}_{k=0}^{N-1}$.
Additionally, the input-output measurements obtained online which are used to include initial conditions are also affected by noise $\{\varepsilon_k^\rmu,\varepsilon_k\}_{k=0}^{\infty}$ as $\tilde{u}_k=u_k+\varepsilon^\rmu_k$, $\tilde{y}_k=y_k+\varepsilon_k$, $k\in\mathbb{I}_{\geq0}$.
\begin{assumption}\label{ass:affine_noise_bound}
(Noise bound)
It holds that $\lVert\varepsilon_k^\rmd\rVert_{2}\leq\bar{\varepsilon}$, $k\in\mathbb{I}_{[0,N-1]}$, and $\lVert\varepsilon_k^\rmu\rVert_{2}\leq\bar{\varepsilon}$, $\lVert\varepsilon_k\rVert_{2}\leq\bar{\varepsilon}$, $k\in\mathbb{I}_{\geq0}$, with $\bar{\varepsilon}>0$ known.
\end{assumption}
Further, we require that the data are persistently exciting.
\begin{assumption}\label{ass:affine_pe}
(Persistence of excitation)
The data $\{u_k^\rmd\}_{k=0}^{N-1}$, $\{x_k^\rmd\}_{k=0}^{N-L}$ are persistently exciting of order $L+n+1$ in the sense of Definition~\ref{def:pe}.
\end{assumption}

\subsection{Proposed MPC scheme}\label{subsec:affine_MPC_scheme}

\begin{subequations}\label{eq:DD_MPC_affine}
Given data $\{u_k^\rmd,\tilde{y}_k^\rmd\}_{k=0}^{N-1}$ as well as initial conditions $\{\tilde{u}_k,\tilde{y}_k\}_{k=t-n}^{t-1}$, the following open-loop optimal control problem is the basis for our MPC scheme:
\begin{align}\label{eq:DD_MPC_affine_cost}
\underset{\substack{\alpha(t),\sigma(t)\\u^{\rms}(t),y^{\rms}(t)}}{\min}&\sum_{k=-n}^{L}
\lVert\bar{u}_k(t)-u^{\rms}(t)\rVert_R^2+\lVert\bar{y}_k(t)-y^{\rms}(t)\rVert_Q^2\\\nonumber
+\lVert y^\rms&(t)-y^\rmr\rVert_S^2+\lambda_\alpha\bar{\varepsilon}^{\beta_{\alpha}}\lVert\alpha(t)-\alpha^{\rms\rmr}\rVert_2^2+\frac{\lambda_\sigma}{\bar{\varepsilon}^{\beta_{\sigma}}}\lVert\sigma(t)\rVert_2^2\\
\label{eq:DD_MPC_affine_hankel} \text{s.t.}\>\> &\>\begin{bmatrix}
\bar{u}(t)\\\bar{y}(t)+\sigma(t)\\1\end{bmatrix}=\begin{bmatrix}H_{L+n+1}\left(u^\rmd\right)\\H_{L+n+1}\left(\tilde{y}^\rmd\right)\\\mathbbm{1}_{N-L-n}^\top\end{bmatrix}\alpha(t),\\\label{eq:DD_MPC_affine_init}
&\>\begin{bmatrix}\bar{u}_{[-n,-1]}(t)\\\bar{y}_{[-n,-1]}(t)\end{bmatrix}=\begin{bmatrix}\tilde{u}_{[t-n,t-1]}\\\tilde{y}_{[t-n,t-1]}\end{bmatrix},
\\\label{eq:DD_MPC_affine_TEC}
&\>\begin{bmatrix}\bar{u}_{[L-n,L]}(t)\\\bar{y}_{[L-n,L]}(t)\end{bmatrix}=\begin{bmatrix}
\bbone_{n+1}\otimes u^{\rms}(t)\\
\bbone_{n+1}\otimes y^{\rms}(t)\end{bmatrix},\\\label{eq:DD_MPC_affine_constraints}
&\>\bar{u}_k(t)\in\mathbb{U},\>k\in\mathbb{I}_{[0,L]},\>u^{\rms}(t)\in\mathbb{U}^{\rms}.
\end{align}
\end{subequations}
Problem~\eqref{eq:DD_MPC_affine} is analogous to Problem~\eqref{eq:DD_MPC_NL} with the main differences that i) the data used for prediction via Hankel matrices in~\eqref{eq:DD_MPC_affine_hankel} are only collected once offline, i.e., they are \emph{not} updated online, and ii) the  measurements $\{u_k^{\rmd},\tilde{y}_k^{\rmd}\}_{k=0}^{N-1}$ and $\{\tilde{u}_k,\tilde{y}_k\}_{k=t-n}^{t-1}$ originate from the \emph{affine} dynamics~\eqref{eq:sys_affine} with additional output measurement noise.
We refer to Section~\ref{subsec:NL_scheme} for a detailed discussion of the ingredients of Problem~\eqref{eq:DD_MPC_NL}, which applies analogously for Problem~\eqref{eq:DD_MPC_affine}.

Note that the regularization of $\alpha(t)$ and $\sigma(t)$ depends on the noise bound $\bar{\varepsilon}$ such that, in the limit $\bar{\varepsilon}\to0$, a nominal MPC scheme with guaranteed exponential stability is recovered.
Similar to Proposition~\ref{prop:continuity}, we include parameters $\beta_{\alpha},\beta_{\sigma}>0$ satisfying $\beta_{\alpha}+2\beta_{\sigma}<2$.
In the literature, different choices for $\beta_{\alpha}$, $\beta_{\sigma}$ have been considered such as $\beta_{\alpha}=1$, $\beta_{\sigma}=0$ (see~\cite{coulson2021distributionally,berberich2021guarantees}) or $\beta_{\alpha}=1$, $\beta_{\sigma}=1$ (see~\cite{bongard2021robust}).
In this paper, the additional flexibility provided by the parameters $\beta_{\alpha}$ and $\beta_{\sigma}$ is mainly required for an argument in the nonlinear stability proof in Section~\ref{sec:NL_MPC}.
All theoretical results in the present section remain valid as long as $\beta_{\alpha}+2\beta_{\sigma}<2$.

Throughout this section, the optimal solution of~\eqref{eq:DD_MPC_affine} at time $t$ is denoted by $\bar{u}^*(t)$, $\bar{y}^*(t)$, $\alpha^*(t)$, $\sigma^*(t)$, $u^{\rms*}(t)$, $y^{\rms*}(t)$, and the closed-loop input, state, and output at time $t$ are denoted by $u_t$, $x_t$, and $y_t$, respectively.
Algorithm~\ref{alg:MPC_n_step} summarizes the proposed MPC scheme, which takes a multi-step form.

\begin{algorithm}
\begin{Algorithm}\label{alg:MPC_n_step}
\normalfont{\textbf{Robust Data-Driven MPC Scheme}}\\
\textbf{Offline:} Choose upper bound on system order $n$, prediction horizon $L$, cost matrices $Q,R,S\succ0$, regularization parameters $\lambda_{\alpha},\lambda_{\sigma},\beta_{\alpha},\beta_{\sigma}>0$, constraint sets $\mathbb{U},\mathbb{U}^{\rms}$, noise bound $\bar{\varepsilon}$, setpoint $y^\rmr$, and generate data $\{u_k^\rmd,\tilde{y}_k^\rmd\}_{k=0}^{N-1}$.
Compute an approximation of $\alpha^{\rms\rmr}$ by solving~\eqref{eq:opt_reach_equil_affine}, compare Remark~\ref{rk:alpha_sr}.\\
\textbf{Online:}
\begin{enumerate}
\item At time $t$, take the past $n$ measurements $\{\tilde{u}_k,\tilde{y}_k\}_{k=t-n}^{t-1}$ and solve~\eqref{eq:DD_MPC_affine}.
\item Apply the input sequence $u_{[t,t+n-1]}=\bar{u}_{[0,n-1]}^*(t)$ over the next $n$ time steps.
\item Set $t=t+n$ and go back to 1).
\end{enumerate}
\end{Algorithm}
\end{algorithm}

Similar to Algorithm~\ref{alg:MPC_n_step_NL}, considering a multi-step MPC scheme instead of a standard (one-step) MPC scheme simplifies the theoretical analysis with terminal equality constraints due to a local controllability argument in the proof.
In~\cite{berberich2020tracking}, guarantees for a \emph{one-step} tracking MPC scheme for linear systems are provided for noise-free data.
We conjecture that the results in this section hold locally close to the steady-state $x^{\rms\rmr}$ if Algorithm~\ref{alg:MPC_n_step} is executed in a one-step fashion, see also~\cite[Remark 4]{berberich2021guarantees} for a similar argument in data-driven MPC without online optimization of $u^\rms(t)$, $y^\rms(t)$.

A key difficulty in analyzing closed-loop properties of data-driven MPC based on Theorem~\ref{thm:willems_affine} (and analogously for MPC based on~\cite{willems2005note}) is that no state measurements are available and the cost only penalizes the input and output.
Therefore, similar to the results in Section~\ref{sec:NL_MPC}, our analysis uses the extended state $\xi_t$ and its noisy version $\tilde{\xi}_t$
\begin{align}\label{eq:xi_def}
\xi_t\coloneqq\begin{bmatrix}u_{[t-n,t-1]}\\y_{[t-n,t-1]}\end{bmatrix},\quad
\tilde{\xi}_t\coloneqq\begin{bmatrix}\tilde{u}_{[t-n,t-1]}\\\tilde{y}_{[t-n,t-1]}\end{bmatrix},
\end{align}
and we write $J_L^*(\tilde{\xi}_t)$ for the optimal cost of~\eqref{eq:DD_MPC_affine}.
We denote the optimal reachable extended state by $\xi^{\rms\rmr}\coloneqq\begin{bmatrix}\bbone_n\otimes u^{\rms\rmr}\\\bbone_n\otimes y^{\rms\rmr}\end{bmatrix}$.
Further, we note that, by observability, there exist $T_{\rmx}$, $T_\rme$, $T_\rmr$, such that
\begin{align}\label{eq:trafo_T_extended_state2}
x_t=T_{\rmx}\xi_t+T_\rme e+T_\rmr r.
\end{align}
Thus, for any two pairs of state and extended state vectors $(x_{t}^\rma,\xi_{t}^\rma)$ and $(x_{t}^\rmb,\xi_{t}^\rmb)$, it holds that
\begin{align}\label{eq:trafo_T_extended_state}
x_{t}^\rma-x_{t}^\rmb=T_{\rmx}(\xi_{t}^\rma-\xi_{t}^\rmb).
\end{align}

\subsection{Closed-loop guarantees}\label{subsec:affine_stability}
In this section, we prove that Algorithm~\ref{alg:MPC_n_step} practically exponentially stabilizes the closed loop.
To this end, we employ Proposition~\ref{prop:continuity}, where the bound on the linearization error is replaced by the noise bound in Assumption~\ref{ass:affine_noise_bound}.
Then,
Proposition~\ref{prop:continuity} implies that the noisy measurements in Problem~\eqref{eq:DD_MPC_affine} can be translated into an additive input disturbance for data-driven MPC with noise-free data.
In the following, we prove that the latter scheme is robust w.r.t.\ input disturbances (Theorem~\ref{thm:stab_affine_nominal}) which we then combine with Proposition~\ref{prop:continuity} to conclude practical exponential stability of the closed loop under the robust data-driven MPC scheme in Algorithm~\ref{alg:MPC_n_step} (Corollary~\ref{cor:stab_affine_robust}).

\begin{subequations}\label{eq:DD_MPC_affine_nominal}
Let us define the underlying nominal MPC scheme:
\begin{align}\label{eq:DD_MPC_affine_nominal_cost}
\underset{\substack{\alpha(t)\\u^{\rms}(t),y^{\rms}(t)}}{\min}&\sum_{k=-n}^{L}
\lVert\bar{u}_k(t)-u^{\rms}(t)\rVert_R^2+\lVert\bar{y}_k(t)-y^{\rms}(t)\rVert_Q^2\\\nonumber
&+\lVert y^\rms(t)-y^\rmr\rVert_S^2+\lambda_{\alpha}\bar{\varepsilon}^{\beta_{\alpha}}\lVert\alpha(t)-\alpha^{\rms\rmr}\rVert_2^2\\
\label{eq:DD_MPC_affine_nominal_hankel} \text{s.t.}\>\> &\>\begin{bmatrix}
\bar{u}(t)\\\bar{y}(t)\\1\end{bmatrix}=\begin{bmatrix}H_{L+n+1}\left(u^{\rmd})\right)\\H_{L+n+1}\left(y^{\rmd}\right)\\\mathbbm{1}_{N-L-n}^\top\end{bmatrix}\alpha(t),\\\label{eq:DD_MPC_affine_nominal_init}
&\>\begin{bmatrix}\bar{u}_{[-n,-1]}(t)\\\bar{y}_{[-n,-1]}(t)\end{bmatrix}=
\begin{bmatrix}u_{[t-n,t-1]}\\y_{[t-n,t-1]}\end{bmatrix},
\\\label{eq:DD_MPC_affine_nominal_TEC}
&\>\begin{bmatrix}\bar{u}_{[L-n,L]}(t)\\\bar{y}_{[L-n,L]}(t)\end{bmatrix}=
\begin{bmatrix}\bbone_{n+1}\otimes u^{\rms}(t)\\
\bbone_{n+1}\otimes y^{\rms}(t)\end{bmatrix},\\\label{eq:DD_MPC_affine_nominal_constraints}
&\>\bar{u}_k(t)\in\mathbb{U},\>k\in\mathbb{I}_{[0,L]},\>u^{\rms}(t)\in\mathbb{U}^{\rms}.
\end{align}
\end{subequations}

Similar to Section~\ref{subsec:NL_continuity}, we denote the optimal solution of Problem~\eqref{eq:DD_MPC_affine_nominal} by $\check{\alpha}^*(t)$, $\check{u}^{\rms*}(t)$, $\check{y}^{\rms*}(t)$, $\check{u}^*(t)$, $\check{y}^*(t)$, and the corresponding optimal cost by $\check{J}_L^*(\xi_t)$.
For the stability analysis, we consider the Lyapunov function candidate $V(\xi_t)\coloneqq \check{J}_L^*(\xi_t)-J_{\mathrm{eq}}^*$.

\begin{theorem}\label{thm:stab_affine_nominal}
Suppose $L\geq2n$, Assumptions~\ref{ass:affine_ctrb},~\ref{ass:affine_unique_steady_state}, and~\ref{ass:affine_pe} hold, and consider System~\eqref{eq:sys_affine} under control with an $n$-step MPC scheme (cf. Algorithm~\ref{alg:MPC_n_step}) based on Problem~\eqref{eq:DD_MPC_affine_nominal}, where the input applied to~\eqref{eq:sys_affine} is perturbed as
\begin{align}\label{eq:thm_stab_affine_nominal_input_disturbance}
u_{[t,t+n-1]}=\check{u}^*_{[0,n-1]}(t)+d_{[t,t+n-1]}
\end{align}
for $t=ni$, $i\in\mathbb{I}_{\geq0}$, with some disturbance $\{d_t\}_{t=0}^{\infty}$ bounded as $\lVert d_t\rVert_{2}\leq\bar{\varepsilon}$ for all $t\in\mathbb{I}_{\geq0}$.

For any $V_{\mathrm{ROA}}>0$, there exist $\bar{\varepsilon}_{\max},c_\rml,c_\rmu>0$, $0<\check{c}_{\rmV}<1$, and $\beta_{\rmd}\in\mathcal{K}_{\infty}$ such that, if $V(\xi_0)\leq V_{\mathrm{ROA}}$, then, for all $\bar{\varepsilon}\leq\bar{\varepsilon}_{\max}$, $t=n i$, $i\in\mathbb{I}_{\geq0}$, Problem~\eqref{eq:DD_MPC_affine_nominal} is feasible and the closed loop satisfies
\begin{align}\label{eq:thm_stab_affine_nominal_lower_upper}
c_\rml\lVert\xi_t-\xi^{\rms\rmr}\rVert_2^2&\leq V(\xi_t)\leq c_\rmu\lVert\xi_t-\xi^{\rms\rmr}\rVert_2^2,\\\label{eq:thm_stab_affine_nominal}
V(\xi_{t+n})&\leq \check{c}_{\rmV}V(\xi_t)+\beta_{\rmd}(\bar{\varepsilon}).
\end{align}
\end{theorem}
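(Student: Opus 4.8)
\textbf{Proof plan for Theorem~\ref{thm:stab_affine_nominal}.}
The plan is to mimic the structure of the model-based analysis in~\cite{berberich2021linearpart1}, but carried out purely in terms of the extended state $\xi_t$ and with the data-driven (Hankel) parametrization replacing the model. First I would establish the two-sided bound~\eqref{eq:thm_stab_affine_nominal_lower_upper}. The upper bound follows by constructing a feasible candidate for Problem~\eqref{eq:DD_MPC_affine_nominal} from the extended state $\xi_t$: using controllability (Assumption~\ref{ass:affine_ctrb}) one steers to the optimal reachable equilibrium $(u^{\rms\rmr},y^{\rms\rmr})$ within $n$ steps, with the transition cost quadratically bounded in $\lVert\xi_t-\xi^{\rms\rmr}\rVert_2$, and one adds the regularization term $\lambda_\alpha\bar\varepsilon^{\beta_\alpha}\lVert\alpha-\alpha^{\rms\rmr}\rVert_2^2$, which is also quadratically bounded in $\lVert\xi_t-\xi^{\rms\rmr}\rVert_2$ since $\alpha^{\rms\rmr}$ is the minimum-norm solution and the equilibrium part cancels via Theorem~\ref{thm:willems_affine} (the $\sum\alpha_i=1$ constraint). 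The lower bound uses strong convexity: by~\eqref{eq:strong_convexity_affine} the artificial-equilibrium cost dominates $\lVert y^{\rms*}(t)-y^{\rms\rmr}\rVert_S^2$, the tracking stage cost dominates the distance of $\xi_t$ to the artificial equilibrium, and Assumption~\ref{ass:affine_unique_steady_state} (via~\eqref{eq:ass_affine_unique_steady_state} and observability~\eqref{eq:trafo_T_extended_state}) converts these into a bound on $\lVert\xi_t-\xi^{\rms\rmr}\rVert_2^2$; subtracting $J_{\mathrm{eq}}^*$ is handled as in~\cite{koehler2020nonlinear}.

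Next I would prove the decay~\eqref{eq:thm_stab_affine_nominal}. The standard argument constructs a candidate solution at time $t+n$ from the optimal solution at time $t$: shift the predicted trajectory by $n$ steps, append $n$ further steps that use local controllability around the (fixed) artificial equilibrium $(u^{\rms*}(t),y^{\rms*}(t))$ to re-reach it, and keep the same artificial equilibrium. Since the Hankel parametrization of Theorem~\ref{thm:willems_affine} is exact for the affine system~\eqref{eq:sys_affine} (persistence of excitation via Assumption~\ref{ass:affine_pe}), this candidate is feasible for Problem~\eqref{eq:DD_MPC_affine_nominal} at $t+n$ — here I would invoke $L\ge 2n$ so that the terminal-constraint window and the initial-condition window do not overlap with the re-steering maneuver. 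Comparing costs yields $\check J_L^*(\xi_{t+n})\le \check J_L^*(\xi_t)-(\text{stage cost over the first }n\text{ steps})+(\text{re-steering cost})+(\text{regularization difference})$. The first-$n$-steps stage cost lower-bounds $c\lVert\xi_t-\xi^{\rms\rmr}\rVert_2^2$ (modulo the usual artificial-equilibrium bookkeeping), and the re-steering cost is bounded by a multiple of the squared distance of the artificial equilibrium to $(u^{\rms\rmr},y^{\rms\rmr})$, which is in turn $\le c\,V(\xi_t)$ by optimality and~\eqref{eq:strong_convexity_affine}. Choosing the cost weights / region of attraction so that the negative term dominates gives contraction with rate $\check c_{\rmV}<1$ in the nominal ($d\equiv0$) case.

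The remaining, and I expect principal, obstacle is handling the input disturbance $d$ in~\eqref{eq:thm_stab_affine_nominal_input_disturbance}: the actual closed-loop state $x_{t+n}$ (equivalently $\xi_{t+n}$) is generated by $\check u^*_{[0,n-1]}(t)+d_{[t,t+n-1]}$ rather than by the nominal optimal input, so the shifted candidate is built around a slightly wrong initial condition. I would quantify this by showing $\lVert\xi_{t+n}-\xi_{t+n}^{\mathrm{nom}}\rVert_2\le c\,\lVert d_{[t,t+n-1]}\rVert_2\le c\bar\varepsilon$ using the affine-system propagation and~\eqref{eq:trafo_T_extended_state}, then use continuity of the optimal value (strong convexity of the cost together with Lipschitz dependence of a feasible candidate on the initial condition, exactly the mechanism already used for Proposition~\ref{prop:continuity}) to absorb this mismatch into an additive term of the form $\beta_{\rmd}(\bar\varepsilon)\in\mathcal{K}_\infty$. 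I would also need the disturbed state to remain in the region where the construction is valid, i.e.\ show $V(\xi_{t+n})\le V_{\mathrm{ROA}}$ recursively: this follows from the contraction estimate $V(\xi_{t+n})\le\check c_{\rmV}V(\xi_t)+\beta_{\rmd}(\bar\varepsilon)$ once $\bar\varepsilon\le\bar\varepsilon_{\max}$ is small enough that $\check c_{\rmV}V_{\mathrm{ROA}}+\beta_{\rmd}(\bar\varepsilon_{\max})\le V_{\mathrm{ROA}}$, which also yields recursive feasibility. Combining the two-sided bound with the recursive decay gives~\eqref{eq:thm_stab_affine_nominal} and hence practical exponential stability of $\xi^{\rms\rmr}$.
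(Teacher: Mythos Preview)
Your bounds in~\eqref{eq:thm_stab_affine_nominal_lower_upper} are essentially fine, but the decay argument has a genuine gap: a single shifted candidate keeping the \emph{same} artificial equilibrium $(\check{u}^{\rms*}(t),\check{y}^{\rms*}(t))$ cannot by itself yield contraction of $V$. The stage cost that you drop when shifting is $\sum_{k=-n}^{-1}\lVert u_{t+k}-\check{u}^{\rms*}(t)\rVert_R^2+\lVert y_{t+k}-\check{y}^{\rms*}(t)\rVert_Q^2$, which measures the distance of $\xi_t$ to the \emph{artificial} equilibrium, not to $\xi^{\rms\rmr}$. If the state is already at the artificial equilibrium but the artificial equilibrium is far from $(u^{\rms\rmr},y^{\rms\rmr})$, this term is zero while $V(\xi_t)\geq\lVert\check{y}^{\rms*}(t)-y^{\rms\rmr}\rVert_S^2>0$, so no choice of weights or region of attraction will make ``the negative term dominate''. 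Your sentence ``the first-$n$-steps stage cost lower-bounds $c\lVert\xi_t-\xi^{\rms\rmr}\rVert_2^2$ (modulo the usual artificial-equilibrium bookkeeping)'' is exactly the step that fails.

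The paper (following the standard tracking-MPC analysis of~\cite{limon2018nonlinear,koehler2020nonlinear}) closes this gap by a case distinction on whether the tracking stage cost exceeds $\gamma\lVert\check{y}^{\rms*}(t)-y^{\rms\rmr}\rVert_S^2$. If it does, the shifted candidate suffices. If it does not, a \emph{second} candidate is used in which the artificial equilibrium is moved toward the target via a convex combination $\hat{y}^\rms(t+n)=\lambda\check{y}^{\rms*}(t)+(1-\lambda)y^{\rms\rmr}$, and a deadbeat input steers the state (which, in this case, is close to the old artificial equilibrium) to the new one; the decrease then comes from strong convexity~\eqref{eq:strong_convexity_affine}, yielding $-(1-\lambda^2)\lVert\check{y}^{\rms*}(t)-y^{\rms\rmr}\rVert_S^2$. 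Choosing $\gamma$ and $1-\lambda$ small makes both cases give a uniform decrease proportional to $\lVert\xi_t-\xi^{\rms\rmr}\rVert_2^2$. Your treatment of the input disturbance $d$ (propagating $\lVert\xi_{t+n}-\xi_{t+n}^{\mathrm{nom}}\rVert_2\leq c\bar{\varepsilon}$ into an additive $\beta_{\rmd}(\bar{\varepsilon})$ and closing invariance for small $\bar{\varepsilon}$) is on the right track, but it needs to be woven into both candidates; in the paper the disturbance shows up as a re-steering cost of order $\bar{\varepsilon}^2$ in the first candidate and via~\eqref{eq:thm_stab_affine_nominal_proof_case2_xtn_xs_bound} in the second.
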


The proof of Theorem~\ref{thm:stab_affine_nominal} is a straightforward adaptation of arguments from model-based and data-driven tracking MPC~\cite{koehler2020nonlinear,berberich2020tracking} and it is provided for completeness in Appendix~\ref{sec:app_B}.
The result shows that the nominal MPC scheme based on repeatedly solving Problem~\eqref{eq:DD_MPC_affine_nominal} in an $n$-step fashion (compare Algorithm~\ref{alg:MPC_n_step}) practically exponentially stabilizes the optimal reachable equilibrium $\xi^{\rms\rmr}$ in closed loop in the presence of input disturbances.
To be precise, the decay bound~\eqref{eq:thm_stab_affine_nominal} in combination with the lower and upper bounds in~\eqref{eq:thm_stab_affine_nominal_lower_upper} implies that the closed loop converges to a neighborhood of $\xi^{\rms\rmr}$, the size of which shrinks if the disturbance bound (denoted by $\bar{\varepsilon}$ with a slight abuse of notation) is small.
The guaranteed region of attraction is then given by $V(\xi_0)\leq V_{\mathrm{ROA}}$ and, in particular, for a larger size $V_{\mathrm{ROA}}$ the maximal disturbance bound $\bar{\varepsilon}$ ensuring the closed-loop properties decreases.
Moreover, the proof of Theorem~\ref{thm:stab_affine_nominal} reveals that the closed-loop robustness improves (i.e., the maximal disturbance bound $\bar{\varepsilon}$ leading to practical stability increases) if the persistence of excitation condition in Assumption~\ref{ass:affine_pe} is quantitatively stronger, i.e., the minimum singular value of the matrix in~\eqref{eq:def_pe} increases.
A similar relation was observed in~\cite{berberich2021guarantees} for linear data-driven MPC without online optimization of $(u^\rms(t),y^\rms(t))$.

In the following, we combine Proposition~\ref{prop:continuity} and Theorem~\ref{thm:stab_affine_nominal} to prove that the closed loop under the robust data-driven MPC scheme based on Problem~\eqref{eq:DD_MPC_affine} is practically exponentially stable in the presence of noisy output data and perturbed initial conditions.
To this end, we employ the Lyapunov function $V(\xi_t)$ used in Theorem~\ref{thm:stab_affine_nominal}.

\begin{corollary}\label{cor:stab_affine_robust}
Suppose $L\geq2n$, Assumptions~\ref{ass:affine_ctrb}--\ref{ass:affine_pe} hold, Problem~\eqref{eq:DD_MPC_affine_nominal} satisfies an LICQ (compare Assumption~\ref{ass:LICQ}), and $\beta_{\alpha}+2\beta_{\sigma}<2$.
Then, for any $V_{\mathrm{ROA}}>0$, there exist $\bar{\varepsilon}_{\max}>0$ and $\beta_{\rmV}\in\mathcal{K}_{\infty}$ such that, for all initial conditions with $V(\xi_0)\leq V_{\mathrm{ROA}}$ and all $\bar{\varepsilon}\leq\bar{\varepsilon}_{\max}$, the closed-loop trajectory under Algorithm~\ref{alg:MPC_n_step} satisfies
\begin{align}\label{eq:cor_stab_affine_robust_decay}
V(\xi_{t+n})&\leq \check{c}_{\rmV}V(\xi_t)+\beta_{\rmV}(\bar{\varepsilon})
\end{align}
for all $t=ni$, $i\in\mathbb{I}_{\geq0}$, with $\check{c}_{\rmV}$ as in~\eqref{eq:thm_stab_affine_nominal}.
\end{corollary}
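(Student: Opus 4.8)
The plan is to combine the open-loop continuity estimate in Proposition~\ref{prop:continuity} with the robust closed-loop stability bound of Theorem~\ref{thm:stab_affine_nominal} in a separation-type argument, essentially mirroring Part (iii) of the proof of Theorem~\ref{thm:NL_stability} but in the simpler setting where the ``perturbation'' is genuine measurement noise of known magnitude $\bar\varepsilon$ rather than the state-dependent linearization error. First I would fix $V_{\mathrm{ROA}}>0$ and argue inductively over the multi-step index $i$ that $V(\xi_t)\le V_{\mathrm{ROA}}$ for $t=ni$; the base case is the hypothesis $V(\xi_0)\le V_{\mathrm{ROA}}$. Under this bound, $\xi_t$ lies in a compact sublevel set (using the lower bound $c_{\rml}\lVert\xi_t-\xi^{\rms\rmr}\rVert_2^2\le V(\xi_t)$ from~\eqref{eq:thm_stab_affine_nominal_lower_upper} together with Assumption~\ref{ass:affine_unique_steady_state}), so all constants from the auxiliary results can be taken uniform.

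The key step is to invoke Proposition~\ref{prop:continuity} with the roles of the linearized and nonlinear data played by the noise-free affine data and the noisy affine data, respectively: identifying $\xi_t'\leftrightarrow\xi_t$ (noise-free extended state), $\xi_t\leftrightarrow\tilde\xi_t$ (noisy extended state), $y_k'(t)\leftrightarrow y_k^{\rmd}$ (noise-free Hankel data) and $y_{t+k}\leftrightarrow\tilde y_k^{\rmd}$ (noisy Hankel data), Assumption~\ref{ass:affine_noise_bound} supplies exactly the bounds~\eqref{eq:prop_continuity_noise_bound} with $\bar\varepsilon$ the noise bound, and the regularization weights $\lambda_\alpha\bar\varepsilon^{\beta_\alpha}$, $\lambda_\sigma/\bar\varepsilon^{\beta_\sigma}$ in Problem~\eqref{eq:DD_MPC_affine} are of the form~\eqref{eq:prop_continuity_lambda_def}. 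Part~(i) of Proposition~\ref{prop:continuity} then gives $J_L^*(\tilde\xi_t)\le(1+c_{\rmJ,\rma}\bar\varepsilon^{\beta_\sigma})\check J_L^*(\xi_t)+c_{\rmJ,\rmb}\bar\varepsilon^{2-\beta_\sigma}(1+\lVert H_{ux,t}^\dagger\rVert_2^2\bar\varepsilon^{\beta_\alpha})$, and since the data are collected offline and persistently exciting (Assumption~\ref{ass:affine_pe}), $\lVert H_{ux,t}^\dagger\rVert_2$ is a fixed constant; hence $\check J_L^*(\xi_t)\le V_{\mathrm{ROA}}+J_{\mathrm{eq}}^*$ yields a uniform bound $\bar J$ on $\check J_L^*(\xi_t)$, and Part~(ii) delivers $\lVert\bar u^*(t)-\check u^*(t)\rVert_2\le\beta_{\rmu}(\bar\varepsilon)$ with $\beta_{\rmu}\in\mathcal K_\infty$. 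Thus the input actually applied to~\eqref{eq:sys_affine}, namely $u_{[t,t+n-1]}=\bar u^*_{[0,n-1]}(t)$, differs from the nominal MPC input $\check u^*_{[0,n-1]}(t)$ by at most $\beta_{\rmu}(\bar\varepsilon)$, i.e.\ it fits the disturbed-input form~\eqref{eq:thm_stab_affine_nominal_input_disturbance} with $\lVert d_t\rVert_2\le\beta_{\rmu}(\bar\varepsilon)$.

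Now I would apply Theorem~\ref{thm:stab_affine_nominal} to the nominal scheme~\eqref{eq:DD_MPC_affine_nominal} with this input disturbance. Strictly, Theorem~\ref{thm:stab_affine_nominal} assumes $\lVert d_t\rVert_2\le\bar\varepsilon$ whereas here the disturbance is $\beta_{\rmu}(\bar\varepsilon)$; since $\beta_{\rmu}\in\mathcal K_\infty$ is increasing with $\beta_{\rmu}(0)=0$, one can either re-parametrize (shrink $\bar\varepsilon_{\max}$ so that $\beta_{\rmu}(\bar\varepsilon)\le$ the admissible disturbance level of Theorem~\ref{thm:stab_affine_nominal}) or, more carefully, re-run the proof of Theorem~\ref{thm:stab_affine_nominal} tracking the disturbance bound $\beta_{\rmu}(\bar\varepsilon)$ directly; either way the decay estimate~\eqref{eq:thm_stab_affine_nominal} becomes $V(\xi_{t+n})\le\check c_{\rmV}V(\xi_t)+\beta_{\rmd}(\beta_{\rmu}(\bar\varepsilon))$, and setting $\beta_{\rmV}\coloneqq\beta_{\rmd}\circ\beta_{\rmu}\in\mathcal K_\infty$ gives~\eqref{eq:cor_stab_affine_robust_decay}. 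To close the induction I must check that the right-hand side does not exceed $V_{\mathrm{ROA}}$: choosing $\bar\varepsilon_{\max}$ small enough that $\beta_{\rmV}(\bar\varepsilon_{\max})\le(1-\check c_{\rmV})V_{\mathrm{ROA}}$ ensures $V(\xi_{t+n})\le\check c_{\rmV}V_{\mathrm{ROA}}+(1-\check c_{\rmV})V_{\mathrm{ROA}}=V_{\mathrm{ROA}}$, completing the inductive step and hence recursive feasibility of both~\eqref{eq:DD_MPC_affine_nominal} and, via Proposition~\ref{prop:continuity}(i), Problem~\eqref{eq:DD_MPC_affine}.

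The main obstacle is the bookkeeping around the $\bar\varepsilon$ versus $\beta_{\rmu}(\bar\varepsilon)$ mismatch in the disturbance magnitude and, relatedly, making sure the $\bar\varepsilon_{\max}$ required by Proposition~\ref{prop:continuity} (which itself depends on the uniform bound $\bar J$, hence on $V_{\mathrm{ROA}}$) is compatible with the $\bar\varepsilon_{\max}$ required by Theorem~\ref{thm:stab_affine_nominal} and with the invariance condition $\beta_{\rmV}(\bar\varepsilon_{\max})\le(1-\check c_{\rmV})V_{\mathrm{ROA}}$; all three constraints must be intersected, which is routine but must be spelled out. A secondary subtlety is verifying that the hypotheses of Proposition~\ref{prop:continuity} (in particular the LICQ, Assumption~\ref{ass:LICQ}, here imposed directly, and persistence of excitation of order $L+n+1$, guaranteed by Assumption~\ref{ass:affine_pe}) hold uniformly in $t$, which they do because the Hankel data are fixed offline — this is actually \emph{easier} than in Theorem~\ref{thm:NL_stability}, where the data change every step. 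Everything else is an immediate transcription of the already-established results.
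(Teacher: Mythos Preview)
Your proposal is correct and follows essentially the same approach as the paper: apply Proposition~\ref{prop:continuity} to translate the measurement noise into an input disturbance bounded by $\beta_{\rmu}(\bar\varepsilon)$, feed this into Theorem~\ref{thm:stab_affine_nominal}, set $\beta_{\rmV}\coloneqq\beta_{\rmd}\circ\beta_{\rmu}$, and close the induction by shrinking $\bar\varepsilon_{\max}$ so that the sublevel set $\{V\le V_{\mathrm{ROA}}\}$ is invariant. The paper's proof is a two-sentence version of exactly this; your write-up simply makes the bookkeeping around the $\bar\varepsilon$ versus $\beta_{\rmu}(\bar\varepsilon)$ disturbance magnitude and the intersection of the various $\bar\varepsilon_{\max}$ thresholds explicit, which the paper leaves implicit.
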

\begin{proof}
If $\bar{\varepsilon}\leq\bar{\varepsilon}_{\max}$ is sufficiently small, then Proposition~\ref{prop:continuity} and Theorem~\ref{thm:stab_affine_nominal} imply~\eqref{eq:cor_stab_affine_robust_decay} for $t=0$ with $\beta_{\rmV}\coloneqq\beta_{\rmd}\circ\beta_{\rmu}$, where $\circ$ denotes concatenation.
Further, with $\bar{\varepsilon}$ sufficiently small, we have $V(\xi_{t+n})\leq V_{\mathrm{ROA}}$ such that the argument can be applied recursively and~\eqref{eq:cor_stab_affine_robust_decay} holds for all $t=ni$, $i\in\mathbb{I}_{\geq0}$.
\end{proof}

Corollary~\ref{cor:stab_affine_robust} shows that the closed loop under Algorithm~\ref{alg:MPC_n_step} exponentially converges to a neighborhood of the optimal reachable equilibrium whose size increases with $\bar{\varepsilon}$.
The result is a simple consequence of the facts that the noise in Problem~\eqref{eq:DD_MPC_affine} can be translated into an input disturbance for nominal data-driven MPC (Proposition~\ref{prop:continuity}) and the closed loop under the latter is practically stable w.r.t.\ the disturbance bound (Theorem~\ref{thm:stab_affine_nominal}).
Analogously to Section~\ref{sec:NL_MPC}, the analysis presented in this section reveals a separation principle of data-driven MPC with noise-free and noisy data.
That is, any data-driven MPC scheme whose nominal version is robust w.r.t.\ input disturbances will also lead to a practically stable closed loop in the presence of noisy output measurements affecting both the offline data in the Hankel matrices and the online data used to specify initial conditions.
More precisely, this analysis can also be applied to simplify the robust stability proofs for data-driven MPC schemes with~\cite{berberich2021guarantees} or without~\cite{bongard2021robust} terminal equality constraints.
Similarly, we conjecture that robustness of a one-step data-driven MPC scheme with the terminal ingredients from~\cite{berberich2021on} can be proven, given the inherent robustness results due to terminal ingredients shown in~\cite{yu2014inherent}.

The main contribution of this paper is a data-driven MPC scheme to control unknown \emph{nonlinear} systems based only on measured input-output data with closed-loop guarantees.
Nevertheless, it is worth noting that the theoretical guarantees provided by Corollary~\ref{cor:stab_affine_robust} are also a significant improvement over existing works~\cite{berberich2021guarantees,bongard2021robust} on closed-loop stability and robustness in data-driven MPC for \emph{linear} systems (cf. Remark~\ref{rk:affine_motivation}).

\subsection{Proof of Theorem~\ref{thm:stab_affine_nominal}}\label{sec:app_B}

\begin{proof}
The proof is divided into four parts.
We first show the lower and upper bounds~\eqref{eq:thm_stab_affine_nominal_lower_upper} on the Lyapunov function candidate in Part (i).
In Parts (ii) and (iii), we propose two different candidate solutions for~\eqref{eq:DD_MPC_affine_nominal} at time $t+n$ for two complementary scenarios, assuming that~\eqref{eq:DD_MPC_affine_nominal} is feasible at time $t$.
In Part (iv), we combine the bounds to prove~\eqref{eq:thm_stab_affine_nominal}.\\
\textbf{(i) Lower and upper bound on $V(\xi_t)$}\\
\textbf{(i).a Lower bound on $V(\xi_t)$}\\
Using that $(\check{u}^{\rms*}(t),\check{y}^{\rms*}(t))$ is feasible for~\eqref{eq:opt_reach_equil_affine}, we have
\begin{align}\label{eq:thm_stab_affine_nominal_proof_lower1}
&\lVert \check{y}^{\rms*}(t)-y^\rmr\rVert_S^2-\lVert y^{\rms\rmr}-y^\rmr\rVert_S^2\stackrel{\eqref{eq:strong_convexity_affine}}{\geq}\lVert\check{y}^{\rms*}(t)-y^{\rms\rmr}\rVert_S^2\\\nonumber
\stackrel{\eqref{eq:ass_affine_unique_steady_state}}{\geq}
&\frac{\lambda_{\min}(S)}{2}\left(\lVert\check{y}^{\rms*}(t)-y^{\rms\rmr}\rVert_2^2+\frac{1}{c_g}\lVert\check{u}^{\rms*}(t)-u^{\rms\rmr}\rVert_2^2\right).
\end{align}
In combination with~\eqref{eq:ab_ineq2}, this implies
\begin{align}\label{eq:thm_stab_affine_nominal_proof_lower}
V(\xi_t)\geq &\sum_{k=-n}^{-1}\lVert u_{t+k}-\check{u}^{\rms*}(t)\rVert_R^2
+\lVert y_{t+k}-\check{y}^{\rms*}(t)\rVert_Q^2\\\nonumber
&+\lVert \check{y}^{\rms*}(t)-y^\rmr\rVert_S^2-\lVert y^{\rms\rmr}-y^\rmr\rVert_S^2\\\nonumber
\stackrel{\eqref{eq:thm_stab_affine_nominal_proof_lower1}}{\geq}&c_{l}\sum_{k=-n}^{-1}(\lVert u_{t+k}-u^{\rms\rmr}\rVert_2^2+\lVert y_{t+k}-y^{\rms\rmr}\rVert_2^2)
\end{align}
and therefore, the lower bound in~\eqref{eq:thm_stab_affine_nominal_lower_upper} with 
\begin{align*}
c_\rml\coloneqq\frac{1}{2}\min\left\{\lambda_{\min}(Q,R), \frac{\lambda_{\min}(S)}{2n}\min\left\{1,\frac{1}{c_g}\right\}\right\}.
\end{align*}
\textbf{(i).b Upper bound on $V(\xi_t)$}\\
Suppose $\lVert\xi_t-\xi^{\rms\rmr}\rVert_2\leq\delta$ for a sufficiently small $\delta>0$.
We define a candidate for the artificial equilibrium by $(u^\rms(t),y^\rms(t))=(u^{\rms\rmr},y^{\rms\rmr})$.
Using controllability and $u^{\rms\rmr}\in\mathbb{U}^{\rms}\subseteq\mathrm{int}(\mathbb{U})$, there exists a feasible input-output trajectory $\{\bar{u}(t),\bar{y}(t)\}_{k=-n}^L$ for Problem~\eqref{eq:DD_MPC_affine_nominal} with $\tilde{\sigma}=0$ satisfying the terminal constraint~\eqref{eq:DD_MPC_affine_nominal_TEC} as well as
\begin{align}\label{eq:thm_stab_affine_nominal_proof_upper_bound_ctrb}
&\sum_{k=-n}^L\lVert\bar{u}_k(t)-u^{\rms\rmr}\rVert_2^2+\lVert\bar{y}_k-y^{\rms\rmr}\rVert_2^2
\leq\Gamma_{\xi}\lVert\xi_t-\xi^{\rms\rmr}\rVert_2^2
\end{align}
for some $\Gamma_{\xi}>0$.
The vector $\alpha(t)$ is chosen as
\begin{align*}
\alpha(t)=H_{ux}^\dagger\begin{bmatrix} \bar{u}(t)\\x_{t-n}\\1\end{bmatrix},
\end{align*}
where $H_{ux}$ is defined as
\begin{align}\label{eq:Hux_affine}
H_{ux}\coloneqq\begin{bmatrix}H_{L+n+1}(u^\rmd)\\H_1(x_{[0,N-L-n-1]}^\rmd)\\\mathbbm{1}_{N-L-n}^\top\end{bmatrix}.
\end{align}
This implies that all constraints of Problem~\eqref{eq:DD_MPC_affine_nominal} are satisfied (compare the proof of ``only if'' in Theorem~\ref{thm:willems_affine}) and thus, the Lyapunov function candidate $V(\xi_t)$ is upper bounded as
\begin{align}\label{eq:thm_stab_affine_nominal_proof_upper_bound1}
V(\xi_t)\leq\Gamma_{\xi}\lambda_{\max}(Q,R)\lVert\xi_t-\xi^{\rms\rmr}\rVert_2^2+\lambda_{\alpha}\bar{\varepsilon}^{\beta_{\alpha}}\lVert\alpha(t)-\alpha^{\rms\rmr}\rVert_2^2.
\end{align}
Further, using $\alpha^{\rms\rmr}=H_{ux}^\dagger\begin{bmatrix}u^{\rms\rmr}_{L+n+1}\\x^{\rms\rmr}\\1\end{bmatrix}$, we infer
\begin{align}\label{eq:thm_stab_affine_nominal_proof_upper_bound2}
\lVert\alpha(t)-\alpha^{\rms\rmr}\rVert_2^2\stackrel{\eqref{eq:thm_stab_affine_nominal_proof_upper_bound_ctrb}}{\leq}&\lVert H_{ux}^\dagger\rVert_2^2(\Gamma_{\xi}\lVert\xi_t-\xi^{\rms\rmr}\rVert_2^2+\lVert x_{t-n}-x^{\rms\rmr}\rVert_2^2).
\end{align}
Finally, using observability, there exists a matrix $M$ such that 
\begin{align}\label{eq:thm_stab_affine_nominal_proof_upper_bound3}
x_{t-n}-x^{\rms\rmr}=M(\xi_t-\xi^{\rms\rmr}).
\end{align}
Combining~\eqref{eq:thm_stab_affine_nominal_proof_upper_bound1}--\eqref{eq:thm_stab_affine_nominal_proof_upper_bound3}, we deduce that the upper bound in~\eqref{eq:thm_stab_affine_nominal_lower_upper} holds for all $\xi_t$ satisfying $\lVert\xi_t-\xi^{\rms\rmr}\rVert_2\leq\delta$ with
\begin{align*}
c_\rmu\coloneqq\Gamma_{\xi}\lambda_{\max}(Q,R)+\lambda_{\alpha}\bar{\varepsilon}^{\beta_{\alpha}}\lVert H_{ux}^\dagger\rVert_2^2\left(1+\Gamma_{\xi}+\lVert M\rVert_2^2\right).
\end{align*}
Since~\eqref{eq:DD_MPC_affine_nominal} is a multi-parametric QP, the optimal cost is piecewise quadratic and thus,~\eqref{eq:thm_stab_affine_nominal_lower_upper} holds for any feasible $\xi_t$ (with a modified constant $c_\rmu$).\\
\textbf{(ii) Candidate solution 1}\\
Assume
\begin{align}\label{eq:thm_stab_affine_nominal_proof_case1}
&\sum_{k=-n}^{-1}\lVert u_{t+k}-\check{u}^{\rms*}(t)\rVert_R^2+\lVert y_{t+k}-\check{y}^{\rms*}(t)\rVert_Q^2\\\nonumber
\geq&\gamma\lVert\check{y}^{\rms*}(t)-y^{\rms\rmr}\rVert_S^2
\end{align}
for a constant $\gamma>0$ which will be fixed later in the proof.\\
\textbf{(ii).a Definition of candidate solution}\\
We choose both the input and output equilibrium candidate as the previously optimal solution $\check{u}^{\rms}\text{$'$}(t+n)=\check{u}^{\rms*}(t)$, $\check{y}^{\rms}\text{$'$}(t+n)=\check{y}^{\rms*}(t)$.
The first $L-2n$ elements of the predicted input trajectory are a shifted version of the previously optimal trajectory, i.e., $\check{u}_k'(t+n)=\check{u}^*_{k+n}(t)$ for $k\in\mathbb{I}_{[0,L-2n-1]}$.
Over time steps $k\in\mathbb{I}_{[-n,-1]}$, we let $\check{u}_k'(t+n)=u_{[t+n+k]}$ and $\check{y}_k'(t+n)=y_{t+n+k}$.
Denote by $\{y_{k}'(t+n)\}_{k=0}^{L-n}$ the output resulting from an application of $\check{u}_{[n,L]}^*(t)$ to the system~\eqref{eq:sys_affine} initialized at $(u_{[t,t+n-1]},y_{[t,t+n-1]})$.
For $k\in\mathbb{I}_{[0,L-2n-1]}$, we let $\check{y}_k'(t+n)=y_k'(t+n)$.
We write $\check{x}_{L-2n}'(t+n)$ for the state at time $L-2n$ corresponding to $\{\check{u}_k'(t+n),\check{y}_k'(t+n)\}_{k=0}^{L-2n-1}$.
By controllability, there exists an input-output trajectory $\{\check{u}_k'(t+n),\check{y}_k'(t+n)\}_{k=L-2n}^{L-n-1}$ steering the system to the steady-state $\check{x}^{\rms*}(t)$ corresponding to $(\check{u}^{\rms*}(t),\check{y}^{\rms*}(t))$ while satisfying
\begin{align}\nonumber
&\sum_{k=L-2n}^{L-n-1}\lVert\check{u}_k'(t+n)-\check{u}^{\rms*}(t)\rVert_2^2+\lVert\check{y}_k'(t+n)-\check{y}^{\rms*}(t)\rVert_2^2\\\label{eq:thm_stab_affine_nominal_proof_case1_ctrb}
\leq&\Gamma\lVert\check{x}_{L-2n}'(t+n)-\check{x}^{\rms*}(t)\rVert_2^2
\end{align}
for some $\Gamma>0$.
In the following, we show that $\check{u}_k'(t+n)\in\mathbb{U}$, $k\in\mathbb{I}_{[L-2n,L-n-1]}$, if $\bar{\varepsilon}$ is sufficiently small.
Recall that $\check{x}^{\rms*}(t)$ is the steady-state corresponding to $(\check{u}^{\rms*}(t),\check{y}^{\rms*}(t))$, whereas the output $y_{[L-2n,L-n-1]}'(t+n)$ results from applying $\check{u}^{\rms*}_n(t)$ to the system at initial state $\check{x}_{L-2n}'(t+n)$.
Hence, using observability, there exists $c_{\rmx,1}>0$ such that
\begin{align*}
&\lVert\check{x}_{L-2n}'(t+n)-\check{x}^{\rms*}(t)\rVert_2^2\\
\leq&c_{\rmx,1}\lVert y_{[L-2n,L-n-1]}'(t+n)-\bbone_n\otimes \check{y}^{\rms*}(t)\rVert_2^2.
\end{align*}
The output trajectories $y_{[L-2n,L-n-1]}'(t+n)$ and $\check{y}_n^{\rms*}(t)$ result from applying the input $\check{u}_{[n,L]}^*(t)$ to the system~\eqref{eq:sys_affine} with initial conditions $(u_{[t,t+n-1]},y_{[t,t+n-1]})$ and $(\check{u}_{[0,n-1]}^*(t),\check{y}_{[0,n-1]}^*(t))$, respectively.
Since the difference between these initial conditions is linear in the disturbance $d_{[t,t+n-1]}$ (compare~\eqref{eq:thm_stab_affine_nominal_input_disturbance}), there exists $c_{\rmx,2}>0$ such that
\begin{align}\label{eq:thm_stab_affine_nominal_proof_case1_x_bound}
\lVert\check{x}_{L-2n}'(t+n)-\check{x}^{\rms*}(t)\rVert_2^2\leq c_{\rmx,2}\bar{\varepsilon}^2.
\end{align}
Together with~\eqref{eq:thm_stab_affine_nominal_proof_case1_ctrb} and $\check{u}^{\rms*}(t)\in\mathrm{int}(\mathbb{U})$, this shows that $\check{u}_k'(t+n)\in\mathbb{U}$ for $k\in\mathbb{I}_{[L-2n,L-n-1]}$ if $\bar{\varepsilon}$ is sufficiently small.
Finally, we let $(\check{u}_k'(t+n),\check{y}_k'(t+n))=(\check{u}^{\rms*}(t),\check{y}^{\rms*}(t))$ for $k\in\mathbb{I}_{[L-n,L]}$.
Using Assumption~\ref{ass:affine_pe}, we choose
\begin{align}\label{eq:thm_stab_affine_nominal_proof_case1_alpha_cand}
\check{\alpha}'(t+n)=H_{ux}^\dagger
\begin{bmatrix}
\check{u}'(t+n)\\x_t\\1\end{bmatrix}
\end{align}
with $H_{ux}$ as in~\eqref{eq:Hux_affine}.
This implies that~\eqref{eq:DD_MPC_affine_nominal_hankel} and thus, all constraints of~\eqref{eq:DD_MPC_affine_nominal} hold.
\\
\textbf{(ii).b Lyapunov function decay}\\
Using the above candidate solution, we have
\begin{align}\label{eq:thm_stab_affine_nominal_proof_case1_cost_decay1}
&V(\xi_{t+n})-V(\xi_t)\\\nonumber
\leq&\sum_{k=-n}^L\lVert\check{u}_k'(t+n)-\check{u}^{\rms*}(t)\rVert_R^2
+\lVert\check{y}_k'(t+n)-\check{y}^{\rms*}(t)\rVert_Q^2\\\nonumber
&+\lambda_{\alpha}\bar{\varepsilon}^{\beta_{\alpha}}(\lVert\check{\alpha}'(t+n)-\alpha^{\rms\rmr}\rVert_2^2
-\lVert\check{\alpha}^*(t)-\alpha^{\rms\rmr}\rVert_2^2)\\\nonumber
&-\sum_{k=-n}^L(\lVert\check{u}_k^*(t)-\check{u}^{\rms*}(t)\rVert_R^2+
\lVert\check{y}_k^*(t)-\check{y}^{\rms*}(t)\rVert_Q^2).
\end{align}
The terms involving the input are 
\begin{align}\label{eq:thm_stab_affine_nominal_proof_case1_input_bound}
&\sum_{k=-n}^L\lVert\check{u}_k'(t+n)-\check{u}^{\rms*}(t)\rVert_R^2-\lVert\check{u}_k^*(t)-\check{u}^{\rms*}(t)\rVert_R^2\\\nonumber
=&-\sum_{k=-n}^{-1}\lVert u_{t+k}-\check{u}^{\rms*}(t)\rVert_R^2+\sum_{k=L-2n}^{L-n-1}\lVert\check{u}_k'(t+n)-\check{u}^{\rms*}(t)\rVert_R^2\\\nonumber
&+\sum_{k=0}^{n-1}\lVert u_{t+k}-\check{u}^{\rms*}(t)\rVert_R^2-\lVert\check{u}_{k}^*(t)-\check{u}^{\rms*}(t)\rVert_R^2.
\end{align}
For $k\in\mathbb{I}_{[-n,L]}$, it holds that
\begin{align}\label{eq:thm_stab_affine_nominal_proof_case1_input_V_upper_bound}
\lVert\check{u}_k^*(t)-\check{u}^{\rms*}(t)\rVert_R^2&\leq V(\xi_t).
\end{align}
Together with the fact that $\lVert u_{t+k}-\check{u}_k^*(t)\rVert_2\leq\bar{\varepsilon}$ for $k\in\mathbb{I}_{[0,n-1]}$ (compare~\eqref{eq:thm_stab_affine_nominal_input_disturbance}) and using~\eqref{eq:ab_ineq}, this leads to
\begin{align}\label{eq:thm_stab_affine_nominal_proof_case1_input_first_steps_bound}
&\sum_{k=0}^{n-1}\lVert u_{t+k}-\check{u}^{\rms*}(t)\rVert_R^2-\lVert\check{u}_{k}^*(t)-\check{u}^{\rms*}(t)\rVert_R^2\\\nonumber
\leq&c_{\rmu,1}\bar{\varepsilon}^2+c_{\rmu,2}\bar{\varepsilon}\sqrt{V(\xi_t)}
\end{align}
for some $c_{\rmu,1},c_{\rmu,2}>0$.
Further, the second term on the right-hand side of~\eqref{eq:thm_stab_affine_nominal_proof_case1_input_bound} is bounded as
\begin{align}\label{eq:thm_stab_affine_nominal_proof_case1_input_ctrb_bound}
\sum_{k=L-2n}^{L-n-1}\lVert\check{u}_k'(t+n)-\check{u}^{\rms*}(t)\rVert_R^2\stackrel{\eqref{eq:thm_stab_affine_nominal_proof_case1_ctrb},\eqref{eq:thm_stab_affine_nominal_proof_case1_x_bound}}{\leq}&
\lambda_{\max}(R)\Gamma c_{\rmx,2}\bar{\varepsilon}^2.
\end{align}
Next, we analyze the terms in~\eqref{eq:thm_stab_affine_nominal_proof_case1_cost_decay1} depending on the output trajectory.
Inequalities~\eqref{eq:thm_stab_affine_nominal_proof_case1_ctrb} and~\eqref{eq:thm_stab_affine_nominal_proof_case1_x_bound} imply
\begin{align}\label{eq:thm_stab_affine_nominal_proof_case1_output_bound0}
\sum_{k=L-2n}^{L-n-1}\lVert\check{y}_k'(t+n)-\check{y}^{\rms*}(t)\rVert_Q^2\leq\lambda_{\max}(Q)\Gamma c_{\rmx,2}\bar{\varepsilon}^2.
\end{align}
The trajectories $\{\check{y}_k'(t+n)\}_{k=0}^{L-2n-1}$ and $\{\check{y}_{k+n}^*(t)\}_{k=0}^{L-2n-1}$ differ only in terms of their initial conditions which in turn differ linearly in terms of $d_{[t,t+n-1]}$.
Hence, following the arguments above leading to~\eqref{eq:thm_stab_affine_nominal_proof_case1_input_first_steps_bound}, there exist $c_{y,1},c_{y,2}>0$, such that for $k\in\mathbb{I}_{[-n,L-2n-1]}$
\begin{align}\label{eq:thm_stab_affine_nominal_proof_case1_output_bound}
&\lVert\check{y}_k'(t+n)-\check{y}^{\rms*}(t)\rVert_Q^2-\lVert\check{y}_{k+n}^*(t)-\check{y}^{\rms*}(t)\rVert_Q^2\\\nonumber
\leq&c_{y,1}\bar{\varepsilon}^2+c_{y,2}\bar{\varepsilon}\sqrt{V(\xi_t)}
\end{align}
Finally, by definition of $\check{\alpha}'(t+n)$ in~\eqref{eq:thm_stab_affine_nominal_proof_case1_alpha_cand}, we have
\begin{align}\label{eq:thm_stab_affine_nominal_proof_case1_alpha_bound_aux}
&\lVert\check{\alpha}'(t+n)-\alpha^{\rms\rmr}\rVert_2^2\\\nonumber
\leq&\lVert H_{ux}^\dagger\rVert_2^2\left\lVert\begin{bmatrix}\check{u}_{[-n,L]}'(t+n)-\bbone_{L+n+1}\otimes u^{\rms\rmr}\\
x_t-x^{\rms\rmr}\\0\end{bmatrix}\right\rVert_2^2\\\nonumber
=&\lVert H_{ux}^\dagger\rVert_2^2(\lVert\check{u}_{[0,n-1]}^*(t)+d_{[t,t+n-1]}-\bbone_n\otimes u^{\rms\rmr}\rVert_2^2\\\nonumber
&+\lVert\check{u}_{[n,L-n-1]}^*(t)-\bbone_{L-2n}\otimes u^{\rms\rmr}\rVert_2^2\\\nonumber
&+(n+1)\lVert\check{u}^{\rms*}(t)-u^{\rms\rmr}\rVert_2^2\\\nonumber
&+\lVert\check{u}_{[L-2n,L-n-1]}'(t+n)-\bbone_n\otimes u^{\rms\rmr}\rVert_2^2+\lVert x_t-x^{\rms\rmr}\rVert_2^2).
\end{align}
Note that
\begin{align}\label{eq:thm_stab_affine_nominal_proof_case1_us_usr_bound}
\lVert\check{u}^{\rms*}(t)-u^{\rms\rmr}\rVert_2^2&\stackrel{\eqref{eq:ass_affine_unique_steady_state}}{\leq} c_g\lVert\check{y}^{\rms*}(t)-y^{\rms\rmr}\rVert_2^2\\\nonumber
&\stackrel{\eqref{eq:ab_ineq2}}{\leq}2c_g(\lVert\check{y}^{\rms*}(t)-y^\rmr\rVert_2^2+\lVert y^{\rms\rmr}-y^\rmr\rVert_2^2)\\\nonumber
&\leq\frac{2c_g}{\lambda_{\min}(S)}
(V(\xi_t)+2J_{\mathrm{eq}}^*),
\end{align}
where we exploit $\check{J}_L^*(\xi_t)=V(\xi_t)+J_{\mathrm{eq}}^*$ for the last inequality.
Using this inequality as well as~\eqref{eq:ab_ineq2},~\eqref{eq:thm_stab_affine_nominal_lower_upper},~\eqref{eq:thm_stab_affine_nominal_proof_case1_ctrb},~\eqref{eq:thm_stab_affine_nominal_proof_case1_x_bound}, and~\eqref{eq:thm_stab_affine_nominal_proof_case1_input_V_upper_bound}, it is straightforward to verify that the input-dependent terms in~\eqref{eq:thm_stab_affine_nominal_proof_case1_alpha_bound_aux} are bounded by $c_{\alpha,1}J_{\mathrm{eq}}^*+c_{\alpha,2}\lVert\xi_t-\xi^{\rms\rmr}\rVert_2^2+c_{\alpha,3}\bar{\varepsilon}^2$ for some $c_{\alpha,i}>0$, $i\in\mathbb{I}_{[1,3]}$ i.e.,
\begin{align}\label{eq:thm_stab_affine_nominal_proof_case1_alpha_bound}
&\lVert\check{\alpha}'(t+n)-\alpha^{\rms\rmr}\rVert_2^2\\\nonumber
\leq &c_{\alpha,1}J_{\mathrm{eq}}^*+c_{\alpha,2}\lVert\xi_t-\xi^{\rms\rmr}\rVert_2^2+c_{\alpha,3}\bar{\varepsilon}^2+\lVert H_{ux}^\dagger\rVert_2^2\lVert x_t-x^{\rms\rmr}\rVert_2^2.
\end{align}
Plugging the bounds~\eqref{eq:thm_stab_affine_nominal_proof_case1_input_first_steps_bound},~\eqref{eq:thm_stab_affine_nominal_proof_case1_input_ctrb_bound} for the input,~\eqref{eq:thm_stab_affine_nominal_proof_case1_output_bound0},~\eqref{eq:thm_stab_affine_nominal_proof_case1_output_bound} for the output, and~\eqref{eq:thm_stab_affine_nominal_proof_case1_alpha_bound} for $\check{\alpha}'(t+n)$ into~\eqref{eq:thm_stab_affine_nominal_proof_case1_cost_decay1}, and using~\eqref{eq:trafo_T_extended_state}, we obtain
\begin{align}
&V(\xi_{t+n})-V(\xi_t)\\\nonumber
\leq&-\sum_{k=-n}^{-1}(\lVert u_{t+k}-\check{u}^{\rms*}(t)\rVert_R^2+\lVert y_{t+k}-\check{y}^{\rms*}(t)\rVert_Q^2)\\\nonumber
&+c_{\rmJ,1}\bar{\varepsilon}^{\beta_{\alpha}}\lVert \xi_t-\xi^{\rms\rmr}\rVert_2^2+c_{\rmJ,2}\bar{\varepsilon}\sqrt{V(\xi_t)}\\\nonumber
&+c_{\rmJ,3}\bar{\varepsilon}^{\beta_{\alpha}}J_{\mathrm{eq}}^*+c_{\rmJ,4}(\bar{\varepsilon}^2+\bar{\varepsilon}^{2+\beta_{\alpha}})
\end{align}
for some $c_{\rmJ,i}>0$, $i\in\mathbb{I}_{[1,4]}$.
Note that
\begin{align*}
&-\sum_{k=-n}^{-1}(\lVert u_{t+k}-\check{u}^{\rms*}(t)\rVert_R^2+\lVert y_{t+k}-\check{y}^{\rms*}(t)\rVert_Q^2)\\
\stackrel{\eqref{eq:ass_affine_unique_steady_state},\eqref{eq:thm_stab_affine_nominal_proof_case1}}{\leq}&-\frac{1}{2}\sum_{k=-n}^{-1}(\lVert u_{t+k}-\check{u}^{\rms*}(t)\rVert_R^2+\lVert y_{t+k}-\check{y}^{\rms*}(t)\rVert_Q^2)\\
&-\frac{\gamma}{2}\left(\frac{\lambda_{\min}(S)}{2c_g}\lVert \check{u}^{\rms*}(t)-u^{\rms\rmr}\rVert_2^2+\frac{1}{2}\lVert\check{y}^{\rms*}(t)-y^{\rms\rmr}\rVert_S^2\right)\\
\stackrel{\eqref{eq:ab_ineq2}}{\leq}&
-c_{\rmJ,6}\sum_{k=-n}^{-1}(\lVert u_{t+k}-u^{\rms\rmr}\rVert_2^2+\lVert y_{t+k}-y^{\rms\rmr}\rVert_2^2)
\end{align*}
with some $c_{\rmJ,6}>0$.
This implies
\begin{align}\label{eq:thm_stab_affine_nominal_proof_case1_cost_decay2}
&V(\xi_{t+n})-V(\xi_t)\\\nonumber
\leq&-(c_{\rmJ,6}-c_{\rmJ,1}\bar{\varepsilon}^{\beta_{\alpha}})\lVert\xi_t-\xi^{\rms\rmr}\rVert_2^2+c_{\rmJ,2}\bar{\varepsilon}\sqrt{V(\xi_t)}\\\nonumber
&+c_{\rmJ,3}\bar{\varepsilon}^{\beta_{\alpha}}J_{\mathrm{eq}}^*+c_{\rmJ,4}(\bar{\varepsilon}^2+\bar{\varepsilon}^{2+\beta_{\alpha}}).
\end{align}
\textbf{(iii) Candidate solution 2}\\
Assume
\begin{align}\label{eq:thm_stab_affine_nominal_proof_case2}
&\sum_{k=-n}^{-1}\lVert u_{t+k}-\check{u}^{\rms*}(t)\rVert_R^2+\lVert y_{t+k}-\check{y}^{\rms*}(t)\rVert_Q^2\\\nonumber
\leq&\gamma\lVert\check{y}^{\rms*}(t)-y^{\rms\rmr}\rVert_S^2.
\end{align}
\textbf{(iii).a Definition of candidate solution}\\
We choose the equilibrium candidate as a convex combination of $(\check{u}^{\rms*}(t),\check{y}^{\rms*}(t))$ and the optimal reachable equilibrium, i.e.,
\begin{align}\label{eq:thm_stab_affine_nominal_proof_case2_uys_def}
\hat{u}^\rms(t+n)&=\lambda \check{u}^{\rms*}(t)+(1-\lambda)u^{\rms\rmr},\\\nonumber
\hat{y}^\rms(t+n)&=\lambda \check{y}^{\rms*}(t)+(1-\lambda)y^{\rms\rmr}
\end{align}
for some $\lambda\in(0,1)$ which will be fixed later in the proof, and we denote the corresponding state by $\hat{x}^\rms(t+n)$.
By controllability, there exists an input steering the system from $x_{t+n}$ to $\hat{x}^\rms(t+n)$ in $L-n\geq n$ steps while satisfying
\begin{align}\nonumber
&\sum_{k=0}^{L}\lVert\hat{u}_k(t+n)-\hat{u}^\rms(t+n)\rVert_2^2+\lVert\hat{y}_k(t+n)-\hat{y}^\rms(t+n)\rVert_2^2\\\label{eq:thm_stab_affine_nominal_proof_case2_ctrb}
&\leq \Gamma\lVert x_{t+n}-\hat{x}^\rms(t+n)\rVert_2^2
\end{align}
for some $\Gamma>0$.
In the following, we show that $\hat{u}_k(t+n)\in\mathbb{U}$, $k\in\mathbb{I}_{[0,L]}$ if $\gamma$, $(1-\lambda)$, and $\bar{\varepsilon}$ are sufficiently small.
Denoting the extended state~\eqref{eq:xi_def} corresponding to $(\check{u}^{\rms*}(t),\check{y}^{\rms*}(t))$ by $\check{\xi}^{\rms*}(t)$, we have
\begin{align}\label{eq:thm_stab_affine_nominal_proof_case2_uy_tracking_cost_bound}
&\quad\sum_{k=0}^{n-1}\lVert \check{u}_k^*(t)-\check{u}^{\rms*}(t)\rVert_2^2+\lVert\check{y}_k^*(t)-\check{y}^{\rms*}(t)\rVert_2^2\\\nonumber
&\leq\sum_{k=0}^{L}\lVert \check{u}_k^*(t)-\check{u}^{\rms*}(t)\rVert_2^2+\lVert\check{y}_k^*(t)-\check{y}^{\rms*}(t)\rVert_2^2\\\nonumber
&\leq\bar{c}_{\rmu}(\lVert\xi_t-\check{\xi}^{\rms*}(t)\rVert_2^2+\lVert \check{x}^{\rms*}(t)-x^{\rms\rmr}\rVert_2^2+\lVert \check{u}^{\rms*}(t)-u^{\rms\rmr}\rVert_2^2)\\\nonumber
&\stackrel{\eqref{eq:ass_affine_unique_steady_state},\eqref{eq:thm_stab_affine_nominal_proof_case2}}{\leq}
\bar{c}_{\rmu}\left(\frac{\lambda_{\max}(S)}{\lambda_{\min}(Q,R)}\bar{c}_{\rmu}\gamma+c_g\right)\lVert\check{y}^{\rms*}(t)-y^{\rms\rmr}\rVert_2^2
\end{align}
for a suitable constant $\bar{c}_{\rmu}>0$.
The second inequality in~\eqref{eq:thm_stab_affine_nominal_proof_case2_uy_tracking_cost_bound} can be shown analogously to the upper bound in Part (i).b of the proof, using a controllability argument based on~\eqref{eq:thm_stab_affine_nominal_proof_case2} with a sufficiently small $\gamma$ and bounding $\lVert\alpha^{\rms*}(t)-\alpha^{\rms\rmr}\rVert_2^2$ with $\alpha^{\rms*}(t)\coloneqq H_{ux}^\dagger\begin{bmatrix}\bbone_{L+n+1}\otimes\check{u}^{\rms*}(t)\\\check{x}^{\rms*}(t)\\1\end{bmatrix}$ in terms of $\lVert \check{x}^{\rms*}(t)-x^{\rms\rmr}\rVert_2^2$ and $\lVert \check{u}^{\rms*}(t)-u^{\rms\rmr}\rVert_2^2$.
Moreover, for $k\in\mathbb{I}_{[0,n-1]}$, the difference
\begin{align*}
\lVert u_{t+k}-\check{u}^{\rms*}(t)\rVert_2^2-\lVert\check{u}_k^*(t)-\check{u}^{\rms*}(t)\rVert_2^2
\end{align*}
is bounded as in~\eqref{eq:thm_stab_affine_nominal_proof_case1_input_first_steps_bound}, and similarly for the output, cf.~\eqref{eq:thm_stab_affine_nominal_proof_case1_output_bound}.
Thus, adding and subtracting $\lVert\check{u}_k^*(t)-\check{u}^{\rms*}(t)\rVert_2^2$ and $\lVert\check{y}_k^*(t)-\check{y}^{\rms*}(t)\rVert_2^2$, we obtain
\begin{align}\label{eq:thm_stab_affine_nominal_proof_case2_xi_steady_state_bound}
&\lVert\xi_{t+n}-\check{\xi}^{\rms*}(t)\rVert_2^2\\\nonumber
=&\sum_{k=0}^{n-1}\lVert u_{t+k}-\check{u}^{\rms*}(t)\rVert_2^2-\lVert\check{u}_k^{*}(t)-\check{u}^{\rms*}(t)\rVert_2^2\\\nonumber
&+\sum_{k=0}^{n-1}\lVert y_{t+k}-\check{y}^{\rms*}(t)\rVert_2^2-\lVert\check{y}_k^{*}(t)-\check{y}^{\rms*}(t)\rVert_2^2\\\nonumber
&+\sum_{k=0}^{n-1}\lVert\check{u}_k^{*}(t)-\check{u}^{\rms*}(t)\rVert_2^2+\lVert\check{y}_k^{*}(t)-\check{y}^{\rms*}(t)\rVert_2^2\\\nonumber
\stackrel{\eqref{eq:thm_stab_affine_nominal_proof_case1_input_first_steps_bound},\eqref{eq:thm_stab_affine_nominal_proof_case1_output_bound},\eqref{eq:thm_stab_affine_nominal_proof_case2_uy_tracking_cost_bound}}{\leq}
&\bar{c}_{\rmx,1}\gamma\lVert\check{y}^{\rms*}(t)-y^{\rms\rmr}\rVert_2^2+\bar{c}_{\rmx,2}\bar{\varepsilon}^2+\bar{c}_{\rmx,3}\bar{\varepsilon}\sqrt{V(\xi_t)}
\end{align}
for some $\bar{c}_{\rmx,i}>0$, $i\in\mathbb{I}_{[1,3]}$.
Writing $\hat{\xi}^\rms(t+n)$ for the extended state~\eqref{eq:xi_def} corresponding to $(\hat{u}^\rms(t+n),\hat{y}^\rms(t+n))$, it holds that
\begin{align}\label{eq:thm_stab_affine_nominal_proof_case2_xi_steady_state_bound2}
&\lVert\check{\xi}^{\rms*}(t)-\hat{\xi}^\rms(t+n)\rVert_2^2\stackrel{\eqref{eq:thm_stab_affine_nominal_proof_case2_uys_def}}{=}(1-\lambda)^2\lVert\check{\xi}^{\rms*}(t)-\xi^{\rms\rmr}\rVert_2^2\\\nonumber
&\stackrel{\eqref{eq:ass_affine_unique_steady_state},\eqref{eq:xi_def}}{\leq}
(1-\lambda)^2n(1+c_g)\lVert\check{y}^{\rms*}(t)-y^{\rms\rmr}\rVert_2^2.
\end{align}
Combining these bounds, we obtain
\begin{align}\label{eq:thm_stab_affine_nominal_proof_case2_xtn_xs_bound}
&\quad\lVert x_{t+n}-\hat{x}^\rms(t+n)\rVert_2^2\stackrel{\eqref{eq:trafo_T_extended_state}}{\leq}
\lVert T_{\rmx}\rVert_2^2\lVert\xi_{t+n}-\hat{\xi}^\rms(t+n)\rVert_2^2\\\nonumber
&\leq2\lVert T_{\rmx}\rVert_2^2(\lVert\xi_{t+n}-\check{\xi}^{\rms*}(t)\rVert_2^2+\lVert\check{\xi}^{\rms*}(t)-\hat{\xi}^\rms(t+n)\rVert_2^2)\\\nonumber
&\stackrel{\eqref{eq:thm_stab_affine_nominal_proof_case2_xi_steady_state_bound},\eqref{eq:thm_stab_affine_nominal_proof_case2_xi_steady_state_bound2}}{\leq}
2\lVert T_{\rmx}\rVert_2^2\Big( (\bar{c}_{\rmx,1}\gamma+n(1+c_g)(1-\lambda)^2)\lVert\check{y}^{\rms*}(t)-y^{\rms\rmr}\rVert_2^2\\\nonumber
&\quad+\bar{c}_{\rmx,2}\bar{\varepsilon}^2+\bar{c}_{\rmx,3}\bar{\varepsilon}\sqrt{V(\xi_t)}\Big).
\end{align}
Moreover, similar to~\eqref{eq:thm_stab_affine_nominal_proof_case1_us_usr_bound}, we have
\begin{align}\label{eq:thm_stab_affine_nominal_proof_case2_ys_ysr_bound}
\lVert\check{y}^{\rms*}(t)-y^{\rms\rmr}\rVert_2^2\leq&\frac{2}{\lambda_{\min}(S)}(V(\xi_t)+2J_{\mathrm{eq}}^*).
\end{align}
Thus, using $V(\xi_t)\leq V_{\mathrm{ROA}}$, if $\gamma$, $(1-\lambda)$, and $\bar{\varepsilon}$ are sufficiently small, then $x_{t+n}$ is sufficiently close to $\hat{x}^\rms(t+n)$ such that~\eqref{eq:thm_stab_affine_nominal_proof_case2_ctrb} and $\hat{u}^\rms(t+n)\in\mathrm{int}(\mathbb{U})$ ensure $\hat{u}_k(t+n)\in\mathbb{U}$ for $k\in\mathbb{I}_{[0,L]}$.

Further, choosing the input-output candidate $(\hat{u}(t+n),\hat{y}(t+n))$ such that $(\hat{u}_k(t+n),\hat{y}_k(t+n))=(u_{t+n+k},y_{t+n+k})$ for $k\in\mathbb{I}_{[-n,-1]}$ and $(\hat{u}_k(t+n),\hat{y}_k(t+n))=(\hat{u}^\rms(t+n),\hat{y}^\rms(t+n))$ for $k\in\mathbb{I}_{[L-n,L]}$ satisfies~\eqref{eq:DD_MPC_affine_nominal_init} and~\eqref{eq:DD_MPC_affine_nominal_TEC}, respectively.
Finally, with $H_{ux}$ as in~\eqref{eq:Hux_affine}, we choose
\begin{align}
\hat{\alpha}(t+n)=H_{ux}^\dagger\begin{bmatrix}\hat{u}(t+n)\\x_t\\1\end{bmatrix},
\end{align}
which implies that all constraints of~\eqref{eq:DD_MPC_affine_nominal} are fulfilled.\\
\textbf{(iii).b Lyapunov function decay}\\
Using the above candidate solution, we obtain
\begin{align}\label{eq:thm_stab_affine_nominal_proof_case2_cost_decay1}
&V(\xi_{t+n})-V(\xi_t)\\\nonumber
\leq&\sum_{k=-n}^L\lVert\hat{u}_k(t+n)-\hat{u}^\rms(t+n)\rVert_R^2+\lVert\hat{y}_k(t+n)-\hat{y}^\rms(t+n)\rVert_Q^2
\\\nonumber
&+\lambda_{\alpha}\bar{\varepsilon}^{\beta_{\alpha}}(\lVert\hat{\alpha}(t+n)-\alpha^{\rms\rmr}\rVert_2^2-\lVert\check{\alpha}^*(t)-\alpha^{\rms\rmr}\rVert_2^2)\\\nonumber
&-\sum_{k=0}^L(\lVert\check{u}_k^*(t)-\check{u}^{\rms*}(t)\rVert_R^2+\lVert\check{y}_k^*(t)-\check{y}^{\rms*}(t)\rVert_Q^2)\\\nonumber
&+\lVert\hat{y}^\rms(t+n)-y^\rmr\rVert_S^2-\lVert\check{y}^{\rms*}(t)-y^\rmr\rVert_S^2.
\end{align}
Similar to~\cite[Inequality (19)]{koehler2020nonlinear}, strong convexity of the cost in~\eqref{eq:opt_reach_equil_affine} implies
\begin{align}\label{eq:thm_stab_affine_nominal_proof_case2_equil_bound}
&\lVert\hat{y}^\rms(t+n)-y^\rmr\rVert_S^2-\lVert\check{y}^{\rms*}(t)-y^\rmr\rVert_S^2\\\nonumber
\leq&-(1-\lambda^2)\lVert\check{y}^{\rms*}(t)-y^{\rms\rmr}\rVert_S^2.
\end{align}
The definition of $\hat{\alpha}(t+n)$ implies
\begin{align}\label{eq:thm_stab_affine_nominal_proof_case2_alpha_bound_intermediate}
&\lVert\hat{\alpha}(t+n)-\alpha^{\rms\rmr}\rVert_2^2\\\nonumber
\leq&\lVert H_{ux}^\dagger\rVert_2^2(\lVert\hat{u}(t+n)-\bbone_{L+n+1}\otimes u^{\rms\rmr}\rVert_2^2+\lVert x_t-x^{\rms\rmr}\rVert_2^2).
\end{align}
Using $\hat{u}^\rms(t+n)-u^{\rms\rmr}=\lambda(\check{u}^{\rms*}(t)-u^{\rms\rmr})$, as well as~\eqref{eq:ab_ineq2},~\eqref{eq:thm_stab_affine_nominal_lower_upper},~\eqref{eq:thm_stab_affine_nominal_proof_case1_us_usr_bound},~\eqref{eq:thm_stab_affine_nominal_proof_case2_ctrb},~\eqref{eq:thm_stab_affine_nominal_proof_case2_xtn_xs_bound}, and~\eqref{eq:thm_stab_affine_nominal_proof_case2_ys_ysr_bound} we obtain
\begin{align}\label{eq:thm_stab_affine_nominal_proof_case2_alpha_bound}
&\lVert\hat{\alpha}(t+n)-\alpha^{\rms\rmr}\rVert_2^2\\\nonumber
\leq&\bar{c}_{\alpha,1}J_{\mathrm{eq}}^*+\bar{c}_{\alpha,2}\lVert\xi_t-\xi^{\rms\rmr}\rVert_2^2+\bar{c}_{\alpha,3}\bar{\varepsilon}^2+\bar{c}_{\alpha,4}\bar{\varepsilon}\sqrt{V(\xi_t)}\\\nonumber
&+\lVert H_{ux}^\dagger\rVert_2^2\lVert x_t-x^{\rms\rmr}\rVert_2^2
\end{align}
for some $\bar{c}_{\alpha,i}>0$, $i\in\mathbb{I}_{[1,4]}$.
Plugging~\eqref{eq:thm_stab_affine_nominal_proof_case2_ctrb},~\eqref{eq:thm_stab_affine_nominal_proof_case2_equil_bound}, and~\eqref{eq:thm_stab_affine_nominal_proof_case2_alpha_bound} into~\eqref{eq:thm_stab_affine_nominal_proof_case2_cost_decay1} and using
\begin{align*}
&\sum_{k=-n}^{-1}\lVert\hat{u}_k(t+n)-\hat{u}^\rms(t+n)\rVert_R^2+\lVert\hat{y}_k(t+n)-\hat{y}^\rms(t+n)\rVert_Q^2\\
&\leq\lambda_{\max}(Q,R)\lVert\xi_{t+n}-\hat{\xi}^\rms(t+n)\rVert_2^2
\end{align*}
as well as~\eqref{eq:trafo_T_extended_state}, there exist $\bar{c}_{\rmJ,i}>0$, $i\in\mathbb{I}_{[1,7]}$ such that
\begin{align}
&\quad V(\xi_{t+n})-V(\xi_t)\\\nonumber
&\leq-\sum_{k=-n}^{-1}(\lVert u_{t+k}-\check{u}^{\rms*}(t)\rVert_R^2+\lVert y_{t+k}-\check{y}^{\rms*}(t)\rVert_Q^2)\\\nonumber
&\quad+\bar{c}_{\rmJ,1}\lVert \xi_{t+n}-\hat{\xi}^\rms(t+n)\rVert_2^2-(1-\lambda^2)\lVert\check{y}^{\rms*}(t)-y^{\rms\rmr}\rVert_S^2\\\nonumber
&\quad+\lambda_{\alpha}\bar{\varepsilon}^{\beta_{\alpha}}\Big(\bar{c}_{\alpha,1}J_{\mathrm{eq}}^*+(\bar{c}_{\alpha,2}+\lVert H_{ux}^\dagger\rVert_2^2\lVert T_{\rmx}\rVert_2^2)
\lVert \xi_t-\xi^{\rms\rmr}\rVert_2^2\\\nonumber
&\quad +\bar{c}_{\alpha,3}\bar{\varepsilon}^2+\bar{c}_{\alpha,4}\bar{\varepsilon}\sqrt{V(\xi_t)}\Big)\\\nonumber
&\stackrel{\eqref{eq:thm_stab_affine_nominal_proof_case2_xtn_xs_bound}}{\leq}
-\sum_{k=-n}^{-1}(\lVert u_{t+k}-\check{u}^{\rms*}(t)\rVert_R^2+\lVert y_{t+k}-\check{y}^{\rms*}(t)\rVert_Q^2)\\\nonumber
&+(\bar{c}_{\rmJ,2}\gamma+\bar{c}_{\rmJ,3}(1-\lambda)^2-\lambda_{\min}(S)(1-\lambda^2))\lVert \check{y}^{\rms*}(t)-y^{\rms\rmr}\rVert_2^2\\\nonumber
&+\bar{c}_{\rmJ,4}\bar{\varepsilon}^{\beta_{\alpha}}\lVert\xi_t-\xi^{\rms\rmr}\rVert_2^2+\bar{c}_{\rmJ,5}(\bar{\varepsilon}+\bar{\varepsilon}^{1+\beta_{\alpha}})\sqrt{V(\xi_t)}\\\nonumber
&+\bar{c}_{\rmJ,6}\bar{\varepsilon}^{\beta_{\alpha}}J_{\mathrm{eq}}^*+\bar{c}_{\rmJ,7}(\bar{\varepsilon}^2+\bar{\varepsilon}^{2+\beta_{\alpha}}).
\end{align}
If $\gamma$ and $(1-\lambda)$ are sufficiently small such that 
\begin{align*}
\bar{c}_{\rmJ,2}\gamma+\bar{c}_{\rmJ,3}(1-\lambda)^2-\bar{c}_{\rmJ,4}(1-\lambda^2)<0,
\end{align*}
then~\eqref{eq:ass_affine_unique_steady_state} and~\eqref{eq:ab_ineq2} lead to
\begin{align}\label{eq:thm_stab_affine_nominal_proof_case2_Lyapunov_decay}
&V(\xi_{t+n})-V(\xi_t)\\\nonumber
\leq&-(\bar{c}_{\rmJ,9}-\bar{c}_{\rmJ,4}\bar{\varepsilon}^{\beta_{\alpha}})\lVert\xi_t-\xi^{\rms\rmr}\rVert_2^2+\bar{c}_{\rmJ,6}\bar{\varepsilon}^{\beta_{\alpha}}J_{\mathrm{eq}}^*\\\nonumber
&+\bar{c}_{\rmJ,5}(\bar{\varepsilon}+\bar{\varepsilon}^{1+\beta_{\alpha}})\sqrt{V(\xi_t)}+\bar{c}_{\rmJ,7}(\bar{\varepsilon}^2+\bar{\varepsilon}^{2+\beta_{\alpha}})
\end{align}
for some $\bar{c}_{\rmJ,9}>0$.\\
\textbf{(iv) Practical stability}\\
Using~\eqref{eq:thm_stab_affine_nominal_proof_case1_cost_decay2} and~\eqref{eq:thm_stab_affine_nominal_proof_case2_Lyapunov_decay} and letting $\bar{\varepsilon}<1$, there exist $\tilde{c}_{J,i}>0$, $i\in\mathbb{I}_{[1,5]}$, such that
\begin{align}\label{eq:thm_stab_affine_nominal_proof_Lyapunov_decay1}
V(\xi_{t+n})-V(\xi_t)
\leq&-(\tilde{c}_{\rmJ,4}-\tilde{c}_{\rmJ,5}\bar{\varepsilon}^{\beta_{\alpha}})\lVert\xi_t-\xi^{\rms\rmr}\rVert_2^2\\\nonumber
&+\tilde{c}_{\rmJ,1}\bar{\varepsilon}\sqrt{V(\xi_t)}+\tilde{c}_{\rmJ,2}\bar{\varepsilon}^{\beta_{\alpha}}J_{\mathrm{eq}}^*+\tilde{c}_{\rmJ,3}\bar{\varepsilon}^2.
\end{align}
If $\bar{\varepsilon}_{\max}<\left(\frac{\tilde{c}_{\rmJ,4}}{\tilde{c}_{\rmJ,5}}\right)^{\frac{1}{\beta_{\alpha}}}$, then this together with~\eqref{eq:thm_stab_affine_nominal_lower_upper} and $V(\xi_t)\leq V_{\mathrm{ROA}}$ implies~\eqref{eq:thm_stab_affine_nominal} for $\check{c}_{\rmV}\coloneqq1-\frac{\tilde{c}_{\rmJ,4}-\tilde{c}_{\rmJ,5}\bar{\varepsilon}^{\beta_{\alpha}}}{c_{\rmu}}<1$ and some $\beta_{\rmd}\in\mathcal{K}_{\infty}$. 
If $\bar{\varepsilon}_{\max}$ is sufficiently small, then $V(\xi_{t+n})\leq V_{\mathrm{ROA}}$ such that the MPC scheme is recursively feasible and Inequalities~\eqref{eq:thm_stab_affine_nominal_lower_upper} and~\eqref{eq:thm_stab_affine_nominal} hold for all $t=ni$, $i\in\mathbb{I}_{\geq0}$.
\end{proof}

\end{document}